\documentclass[11pt,centertags,oneside, reqno]{amsart}
\usepackage[T1]{fontenc}
\usepackage[english]{babel}
\usepackage{amsmath, mathtools, amstext,amsthm,a4,amssymb,amscd}
\usepackage[nobysame]{amsrefs}
\usepackage[mathscr]{eucal}
\usepackage{mathrsfs}
\usepackage{epsf}
\usepackage{typearea}
\usepackage{charter}
\usepackage[hidelinks]{hyperref}
\usepackage[shortlabels]{enumitem}
\setlist[enumerate]{nosep}
\usepackage{float}
\usepackage{caption}
\usepackage{tikz}

\usetikzlibrary{decorations.text}
\usetikzlibrary{decorations.pathreplacing}
\usetikzlibrary{patterns}
\usetikzlibrary{cd}
\usetikzlibrary{matrix}

\usepackage[a4paper,width=16.2cm,top=2.5cm,bottom=2.5cm]{geometry}

\numberwithin{equation}{section}

\title{Asymptotics for Toeplitz operators with symbol an indicator function}
\author{Razvan Apredoaei}

\address{Razvan Apredoaei \\ Université Paris Cité, Sorbonne Université, CNRS, IMJ-PRG, F-75013 Paris, France.} 
\email{razvan.apredoaei@imj-prg.fr}

\date{}

\newtheorem{thm}{Theorem}[section]
\newtheorem*{thm*}{Theorem}
\newtheorem{lem}[thm]{Lemma}
\newtheorem{prop}[thm]{Proposition}
\newtheorem{cor}[thm]{Corollary}
\newtheorem{rem}[thm]{Remark}
\newtheorem{rems}[thm]{Remarks}

\newcommand{\scal}[2]{\left\langle{#1},{#2}\right\rangle}
\newcommand{\bbone}{\mathbf{1}}
\newcommand{\field}[1]{\mathbb{#1}}

\newcommand{\R}{\field{R}}
\newcommand{\C}{\field{C}}
\newcommand{\N}{\field{N}}

\newcommand{\be}{\begin{eqnarray}}
\newcommand{\ee}{\end{eqnarray}}

\newcommand{\vare}{\varepsilon}
\newcommand{\varp}{\varphi}

\def\cC{\mathscr{C}}

\def\mF{\mathcal{F}}
\def\mH{\mathcal{H}}

\def\mJ{\mathcal{J}}

\def\mL{\mathcal{L}}

\def\mO{\mathcal{O}}
\def\mP{\mathcal{P}}
\def\mQ{\mathcal{Q}}

\def\mU{\mathcal{U}}

\def\kn{\mathfrak{n}}

\def\Re{{\rm Re}}
\def\Im{{\rm Im}}

\DeclareMathOperator{\End}{End}

\DeclareMathOperator{\Ker}{Ker}

\DeclareMathOperator{\rank}{rk}
\DeclareMathOperator{\Id}{Id}

\DeclareMathOperator{\tr}{Tr}

\DeclareMathOperator{\vol}{vol}

\DeclareMathOperator{\inj}{inj}
\DeclareMathOperator{\spec}{Spec}

\DeclareMathOperator{\Ric}{Ric}

\begin{document}

\begin{abstract}
We prove an off-diagonal expansion of the kernel of the Toeplitz operator whose symbol is the indicator function of a compact domain with smooth boundary in a complete symplectic manifold of bounded geometry. Using our approach, we extend two results to the non-compact setting: the first concerns the asymptotics of the trace of polynomials in this operator, and the second establishes a Weyl law for this Toeplitz operator. 
\end{abstract}

\maketitle
\tableofcontents

\section{Introduction}

Let $(X, \omega)$ be a compact Kähler manifold and $L$ a holomorphic Hermitian line bundle over $X$ such that $L$ is prequantum, i.e., the curvature of the Chern connexion of $L$ coincides with the symplectic form $\omega$ (see \eqref{prequantum}). Let  $P_p$ be the projection on the space $\mH_p$ of holomorphic sections of $L^p$. The projector $P_p$ has a smooth kernel known as the \textit{Bergman kernel}. Let $f$ be a bounded function on $X$. The Toeplitz operator with symbol $f$ is the family of operators defined by $T_{f, p} := P_pfP_p$.  In the particular case where $f = \bbone_W$ is the indicator function of a compact domain $W$ with smooth boundary $V = \partial W$, the operator $T_{\bbone_W, p}$ behaves semiclassically as an almost-projection. Its spectrum concentrates near $0$ and $1$ with order $p^n$ and the number of eigenvalues contained in $[a, b]\subset ]0, 1[$ is given by the following Weyl law:
\begin{equation}\label{Weyl}
		\Big|\spec(T_{W, p})  \cap [a, b] \Big| = p^{n-1/2} \frac{\mathrm{vol}(V)}{\sqrt{2\pi}} (\mathrm{erfc}^{-1}(a)-\mathrm{erfc}^{-1}(b)) + o(p^{n-1/2}),
\end{equation}
where $\mathrm{erfc}$ denotes the complementary error function (see \eqref{erfunction}). This behavior has been uncovered by Charles\textendash Estienne in~\cite[Theorem 1.1]{CE20} and comes from the full asymptotic expansion in $p \rightarrow \infty$ of the trace of $g(T_{\bbone_W, p})$ for $g$ polynomial, which they prove in~\cite[Theorem 1.2]{CE20}. They find an application of their results in the study of entanglement entropy for free Fermions~\cite[\S 7]{CE20}. As another application, Polterovich remarked in~\cite[\S 6]{P07} that Toeplitz operators with piecewise constant symbol may be a relevant tool to infer the homotopy type of a quantum phase space.

In this paper we are interested in the more general setting where $(X, \omega)$ is a symplectic manifold of bounded geometry endowed with a prequantum Hermitian line bundle $L$ and a Hermitian vector bundle $E$ over $X$. In this setting, the operators $T_{f, p}= P_p f P_p$ act on sections of $L^p\otimes E$, and $P_p$ denotes the projection on a closed subspace $\mH_p$ of the space of square-integrable sections $L^2(X, L^p \otimes E)$ for the metrics on $L$ and $E$. In our case $\mH_p$ is a spectral space associated to a renormalisation of the Bochner Laplacian which was also used in \cite{IOOSAdv}.

We prove asymptotics for the Toeplitz operator with symbol an indicator function of a compact domain $W$ inside $X$ with smooth boundary $V$. The novelty of this paper is fourfold:
\begin{enumerate}
\item In Theorem~\ref{ThondiagonalTwexpansion}, we obtain full on-diagonal kernel asymptotics for the Toeplitz operator $T_{W, p}$ and show that the transition across the boundary is governed by a universal error-function profile;

\item In Theorem~\ref{thmNoyauCompo}, we  get on-diagonal kernel asymptotics for $g(T_{W, p})$ with $g$ a polynomial;
\item Using (2), we extend the trace asymptotics of Charles--Estienne \cite{CE20} to complete symplectic manifolds of bounded geometry as stated in Corollaries \ref{ThCEGeneralCase} and \ref{CorTraceToeplitz};
\item We also extend the Weyl law~\eqref{Weyl} for $T_{W, p}$ to the non-compact setting (see Corollary~\ref{CorTraceToeplitz}).
\end{enumerate}

The case of an indicator function of a compact domain with smooth boundary is arguably the simplest example of a Toeplitz operator with singular symbol. The asymptotics we derive emphasize that its discontinuity creates a boundary layer phenomenon that is invisible for smooth symbols and leads to half-integer powers in the semiclassical expansions.

Two key results are essential to our proofs. The first one is the asymptotic expansion of the Bergman kernel in a neighborhood of the diagonal established by Dai--Liu--Ma \cite[Theorem 4.18]{DLM06}, Ma--Marinescu \cite[Chapter 8]{MM07} and Kordyukov--Ma--Marinescu \cite[Theorem 4.3]{KMM19}. The second one is the exponential decay property of the Bergman kernel outside the diagonal established by Ma--Marinescu~\cite[Theorem 1]{MM15}, and Kordyukov--Ma--Marinescu~\cite[Theorem 1.2]{KMM19}. 

Let us mention that previous known results on kernel asymptotics for the operators $T_{f, p}$ were obtained with stronger regularity assumptions on $f$. On a symplectic manifold and in the case where $\mH_p$ is the kernel of the $\mathrm{spin}^c$-Dirac operator, Ma\textendash Marinescu~\cite[Lemma 7.2.4, \S 8.1.2]{MM07}, and~\cite[\S 4]{MM08_TSym} proved a full off-diagonal expansion for the kernel of Toeplitz operators with symbol $f$ when $f$ is smooth. In~\cite[\S 3]{BMMP14}, Barron, Ma, Marinescu and Pinssonnault prove kernel asymptotics for Toeplitz operators when $f$ is $\cC^k$ and $\mH_p$ the kernel of the $\mathrm{spin}^c$-Dirac operator. In \cite{CharlesPolterovich}, Charles and Polterovich also studied the Berezin--Toeplitz quantization of $\cC^k$ functions. 

We now state our results. Let $(X, \omega)$ be a symplectic manifold of dimension $2n$. Let $(L, h^L)$ be a Hermitian line bundle over $X$ and $(E, h^E)$ be a Hermitian vector bundle over $X$, both endowed with Hermitian connections $\nabla^L$ and $\nabla^E$, with curvatures $R^L := (\nabla^L)^2$ and $R^E := (\nabla^E)^2$. We assume that $L$ satisfies the prequantization condition 

	\begin{equation}\label{prequantum}
		\frac{\sqrt{-1}}{2\pi}R^L = \omega.
	\end{equation}
We endow $(X, \omega)$ with a \textit{complete Riemannian metric} $g^{TX}$. Let $J : TX \rightarrow TX$ be an almost-complex structure on X compatible with $\omega$ and $g^{TX}$. Let $J_0 : TX \rightarrow TX$ be the skew-adjoint linear map which satisfies

	\begin{equation}\label{J0}
		\omega(u,v) = g^{TX}(J_0 u, v), \quad u, v \in TX,
	\end{equation}
then $J = J_0(-J_0^2)^{-1/2}$. We suppose that $(X, L, E)$ has \emph{bounded geometry}: the injectivity radius $\inj^X$ of $(X, g^{TX})$ is positive and the derivatives of any order of $R^L, R^E, J, g^{TX}$ are uniformly bounded on $X$. Let $\Delta^{L^p \otimes E}= (\nabla^{L^p \otimes E})^* \nabla^{L^p \otimes E}$ denote the Bochner Laplacian  (see~\eqref{BochnerLapl}) associated to $\nabla^L$, $\nabla^E$ and $g^{TX}$, with $(\nabla^{L^p \otimes E})^*$ the formal adjoint of $\nabla^{L^p \otimes E}$. Define by

	\begin{equation}\label{RenormBochnerL}
		\Delta_p := \Delta^{L^p \otimes E} - p\tau, \quad \mathrm{with} \quad \tau(x) := -\pi \tr \left[(JJ_0)(x)\right], \quad x \in X,
	\end{equation}
the \emph{renormalized Bochner Laplacian} acting on $\cC^{\infty}(X, L^p \otimes E)$, as originally introduced by \linebreak Guillemin and Uribe in~\cite{GU88}. Assume
	\begin{equation}\label{CourbureUnifPositive}
		\mu_0 := \underset{x \in X, u \in T_xX \setminus \{0\}}{\inf} {\frac{\sqrt{-1} R_x^L(u, Ju)}{|u|_{g_x^{TX}}^2}} > 0.
	\end{equation}

By~\cite[Theorem 1.1]{KMM19},~\cite[Corollary 1.2]{MM02}, we have the following spectral gap property: there exists $C_L > 0$ such that for any $p \in \N$ the spectrum of $\Delta_p$ satisfies

	\begin{equation}
		\spec(\Delta_p) \subset [-C_L, C_L] \cup [2p\mu_0 - C_L, + \infty[.
		\label{rBochnerspectralgap}
	\end{equation}
For $p$ large enough, as in~\cite{MM08},~\cite[Chapter 8]{MM07},~\cite{KMM19}, let $\mH_p([-C_L, C_L]) \subset L^2(X, L^p \otimes E)$ denote the closed subspace of $L^2(X, L^p \otimes E)$ given by the spectral space corresponding to $[-C_L, C_L]$.
Let $P_p$ be the spectral projector corresponding to $[-C_L, C_L]$, called the \emph{generalized Bergman projector} associated with $\Delta_p$ in the sense of~\cite{MM08}:

	\begin{equation}\label{Bergmanprojector}
		P_p : L^2(X, L^p \otimes E) \rightarrow \mH_p([-C_L, C_L]).
	\end{equation} 
By the Schwartz kernel theorem, $P_p$  has a distribution kernel with respect to the Riemannian volume form $dv_X$ of $(X, g^{TX})$. Its kernel $P_p( \cdot, \cdot) \in \cC^{\infty}(X \times X, (L^p \otimes E) \boxtimes {(L^p \otimes E)}^*)$ is called the \emph{generalized Bergman kernel} as in~\cite{MM08}. As in \cite[Remark 1.4.3, Problem 1.5]{MM07}, the smoothness of $P_p$ comes from the ellipticity of the operator $\Delta_p$. Let $d(x, x')$ denote the geodesic distance between two points $x, x' \in X$. By~\cite[Theorem 1.2]{KMM19}, ~\cite[Theorem 1]{MM15}, we have the following exponential estimates: there exists $c > 0$ such that for any $l > 0$, there exists $C_l > 0$ such that for any $p \in \N$ and $x, x' \in X$, we have 
	\begin{equation}\label{expestimates}
		|P_p(x, x')|_{\cC^l(X \times X)} \leq C_l p^{n+ \frac{l}{2}}e^{-c\sqrt{p}d(x, x')}.
	\end{equation}
 In the above inequality the norm $\cC^l(X\times X)$ denotes the pointwise $\cC^l$-seminorm of the section $P_p$ at the point $(x, x') \in X \times X$, which is the sum of the norms induced by $h^L, h^E, g^{TX}$ of the derivatives up to order $l$ of $P_p$ for $\nabla^{L^p \otimes E}$ and $\nabla^{TX}$ evaluated at $(x, x')$. The generalized Bergman kernel $P_p$ also admits an asymptotic expansion in $p$, which has been established by Ma--Marinescu~\cite[Chapter 8]{MM07} and Lu--Ma--Marinescu~\cite{LMM17}. In particular by~\cite[Problem 6.1]{MM07}, as in~\cite[Theorem 6]{MM15}, there exist smooth coefficients $b_r \in \cC^{\infty}(X, \mathrm{End}(E))$ such that for any $k,l \in \N$, there exists $C_{k, l} > 0$ such that for any $x \in X$, $p \in \N^*$,
	\begin{equation}\label{BergmanKondiag}
		\left|P_p(x, x) - \sum_{r=0}^k b_r(x)p^{n-r}\right|_{{\cC^l}(X)} \leq C_{k,l}p^{n-k-1},
	\end{equation}
moreover if we denote by $T^{(1, 0)}X := \Ker(J-\sqrt{-1}\Id) \subset TX \otimes \C$ the holomorphic tangent bundle and by $\dot{R}^L \in \mathrm{End}(T^{(1, 0)}X)$ the endomorphism defined by 
	\begin{equation}
		{R}^L\big(Z, \overline{Z}'\big) = \langle \dot{R}^LZ, \ Z' \rangle, \quad Z, Z' \in T^{(1, 0)}X,
	\end{equation}
we have $b_0 = \det \left(\dot{R}^L/2\pi\right)$. In this paper we will use a refined version of the off-diagonal expansion for the generalized Bergman kernel stated in the work of Kordyukov--Ma--Marinescu~\cite[Theorem 4.3]{KMM19} (see Theorem~\ref{ThBkernelasymptotics} of this paper).

For $f \in L^{\infty}(X)$, the \emph{Toeplitz operator with symbol $f$} is the family of operators acting on $\mH_p([-C_L, C_L])$ defined by 
\begin{equation}
T_{f, p} := P_p M_f P_p,
\end{equation} 
where $M_f$ denotes the multiplication by $f$. Let $W \subset X$ be a compact domain with smooth boundary $V = \partial W$. We consider the Toeplitz operator with symbol the indicator function of $W$
	\begin{equation}
		T_{W, p}:= P_p M_{\bbone_{W}}P_p, \quad \mathrm{with} \quad \bbone_W(x) = \begin{cases}
		1 \quad \mathrm{if} \quad x \in W, \\
		0 \quad  \mathrm{otherwise}.
		\end{cases}
	\end{equation}
Let $\exp_{x_0}^X : T_{x_0}X \rightarrow X$ denote the exponential map for $(X, g^{TX})$ at $x_0 \in X$. Let $\mU_\eta \subset X$ be a tubular neighborhood of $V$ and $V_s \subset \mU_{\eta}$ the hypersurface at distance $s \in ]-\eta/2, \eta/2[$ of $V$, i.e. 

	\begin{equation}\label{hypersurf}
		V_s := \{\exp^X_y(se_{\kn}(y)) \ : \ y \in V \},
	\end{equation}
where $e_{\kn}(y)$ is the \emph{inward pointing unit normal} vector at $y \in V$, as represented in Figure \ref{fig_Vs}.

\begin{figure}[H]
\captionsetup{skip = 0pt, aboveskip = 0pt}
\centering
		\begin{tikzpicture}[>=stealth]
			\useasboundingbox (0, -3.2) rectangle (6, 3);

			\draw[dashed] (0,2) .. controls (2,4) and (4,0) .. (6,2);

			\draw[thick] (0,0) .. controls (2,2) and (4,-2) .. (6,0);

			\draw[dashed] (0,-2) .. controls (2,0) and (4,-4) .. (6,-2);

			\draw[] (0,-0.75) .. controls (2,1.25) and (4,-2.75) .. (6,-0.75);

			\node[] at (1.5,-2.5) {$W$};

			\node[] at (4.5,2.5) {$X\setminus W$};

			\node[] at (7, 0.2) {$V = \partial W$};

			\node[] at (6.5, -0.73) {$V_s$};
			
  			\draw [decorate, decoration={brace, amplitude=5pt}, thick] (-0.25,-2.1) -- (-0.25,1.9) node [midway, left=10pt] {$\mU_{\eta}$};

  			\fill (3,0) circle (2pt) node[above right] {\small $y$};

  			\draw[->, thick] (3,0) -- (2.35,-1.45) node[right] {\small $\vec{e}_{\kn}(y)$};
		\end{tikzpicture}
\caption{The case $s \in [0, \eta/2]$.}
\label{fig_Vs}
\end{figure}

As a particular case of Lemma~\ref{Nonlocalizedasymptotics} and Theorem~\ref{ThToeplitzindicasymptoticsFermi}, our first result states as follows.
\begin{thm}\label{ThondiagonalTwexpansion}
Assume $(X, g^{TX})$ is complete, $(X, L, E)$ has bounded geometry, i.e., $\inj^X>0$  and the derivatives of any order of $R^L, R^E, J, g^{TX}$ are uniformly bounded on $X$, and assume \eqref{CourbureUnifPositive}. Let $W$ be a compact domain of $X$ with smooth boundary $V$ and $\mU_{\eta}$ a tubular neighborhood of $V$. Then the following statements hold:
\begin{enumerate}
	\item There exist $p_0 \in \N$, $c > 0$ such that for $l \in \N$, there exist $C_{l, W}, C_{l, W}', C_{l, W}''>0$ such that for any $p \geq p_0$:
	\begin{enumerate}[(i)]
		\item If $x, x' \in X$, 
			\begin{equation}
				|T_{W, p}(x,x')|_{\cC^l(X\times X)} \leq C_{l, W}p^{n+l/2} e^{-c\sqrt{p}d(x,x')}.
			\end{equation}
	\item If $x \in W \setminus V$, 
			\begin{equation}
				|T_{W, p}(x,x)-P_p(x,x)|_{\cC^l(X)} \leq C'_{l, W}p^{n+l/2} e^{-c\sqrt{p}d(x,V)}.
			\end{equation}

	\item If $x \in X \setminus W$,
			\begin{equation}
				|T_{W, p}(x,x)|_{\cC^l(X)} \leq C''_{l, W}p^{n+l/2} e^{-c\sqrt{p}d(x,V)}.
			\end{equation}
	\end{enumerate}

	\item There exist $\mathrm{End}(E)$-valued polynomials $q_{r, x}$ with smooth coefficients in $x$ such that for $x\in \mU_{\eta}$,  if $x \in V_s$ for some $s \in ]-\eta/2, \eta/2[$, 
		\begin{equation}\label{quatre}
			T_{W, p}(x, x) = P_p(x, x)\mathrm{erfc}\left(-s\sqrt{p\pi \alpha_{x}} \right)+ \sum_{r = 1}^{\infty} p^{n-r/2} q_{r, x}(p^{1/2}s) \exp(-s^2p\pi \alpha_{x}) +\mO(p^{-\infty}),
		\end{equation}
with $\alpha_{x} := \omega(e_{\kn}, Je_{\kn})(x)$ and $P_p(x,x)$ has the asymptotics~\eqref{BergmanKondiag}. In the particular case where $J = J_0$, we have $\alpha_{x} = 1$. 

\end{enumerate}
\end{thm}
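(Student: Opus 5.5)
The plan is to start from the integral formula
\[
T_{W,p}(x,x') = \int_W P_p(x,y)P_p(y,x')\,dv_X(y),
\]
which follows from $P_p^2=P_p$ and the definition $T_{W,p}=P_pM_{\bbone_W}P_p$. Everything then reduces to the exponential estimate \eqref{expestimates} and the near-diagonal expansion of Theorem~\ref{ThBkernelasymptotics}.

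For part (1)(i), differentiate under the integral sign and bound each factor by \eqref{expestimates}. The $l$ derivatives fall on the $x$ and $x'$ variables only, which gives the factor $p^{2n+l/2}$. The triangle inequality gives
\[
e^{-c\sqrt p d(x,y)}e^{-c\sqrt p d(y,x')}\le e^{-\frac c2\sqrt p d(x,x')}e^{-\frac c2\sqrt p(d(x,y)+d(y,x'))}.
\]
By bounded geometry, volumes of balls grow at most exponentially, so $\int_X e^{-\frac c2\sqrt p d(x,y)}dv_X(y)\le Cp^{-n}$ uniformly in $x$. Together these give (i) with the constant $c/2$.

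For (ii), write $T_{W,p}-P_p=-P_pM_{\bbone_{X\setminus W}}P_p$, so the integral runs over $y\in X\setminus W$. If $x\in W\setminus V$, then $d(x,y)\ge d(x,V)$. Split the exponent in half: one half yields $e^{-\frac c2\sqrt p\, d(x,V)}$ and the other half gives the same $p^{-n}$ integrability as before. Case (iii) is identical, with the integral over $W$ and $d(x,y)\ge d(x,V)$ for $y\in W$. Note that (1) is uniform on all of $X$; compactness of $W$ is not needed for it.

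For part (2), fix $x\in V_s$ and localise: by (1)(ii) and (iii), only $y$ with $d(x,y)\le p^{-1/2+\epsilon}$ contribute modulo $\mO(p^{-\infty})$. Work in normal coordinates $Z\in T_xX$ centred at $x$, with the unitary trivialisation of $L^p\otimes E$ along radial geodesics. Theorem~\ref{ThBkernelasymptotics} then gives
\[
p^{-n}P_p(x,\exp_x(Z/\sqrt p))\cong\sum_r p^{-r/2}J_r(Z)\mP_x(0,Z)\kappa^{-1/2}(Z),
\]
where the $J_r$ are polynomials and $\mP_x$ is the model Bergman kernel. For real arguments $|\mP_x(0,Z)|^2$ is a Gaussian in $Z$ with quadratic form governed by $\dot R^L$, and $\mP_x(0,Z)\mP_x(Z,0)=\mP_x(0,0)\,|\mP_x(0,Z)|^2/\mP_x(0,0)$ up to normalisation.

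After rescaling $Z\mapsto Z/\sqrt p$, the domain $W$ becomes $\{Z: \sqrt p\,\phi(Z/\sqrt p) \ge 0\}$, where $\phi$ is a signed defining function with $\phi(Z)=s+\langle Z,e_{\kn}\rangle+\mO(|Z|^2)$. Taylor-expand the indicator around the half-space $\{\langle Z,e_{\kn}\rangle\ge -\sqrt p s\}$. The curvature corrections of $V$ then produce terms $p^{-1/2}(\text{polynomial})\,\delta$-type boundary contributions. These integrate against the Gaussian to give polynomials in $\sqrt p s$ times $e^{-\pi\alpha_x p s^2}$. To make the expansion rigorous without expanding the indicator function, I would instead use Fermi-type coordinates $(y,t)$ adapted to $V$, in which $W=\{t\ge0\}$ exactly. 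The Jacobian and the kernel expansions then become smooth functions of $(y,t)$, and the integral over the half-space $t\ge 0$ against the Gaussian gives exactly erfc plus Gaussian-times-polynomial terms.

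The leading term is computed as follows. Integrating the model Gaussian over the tangential directions reproduces $P_p(x,x)$ times a one-dimensional Gaussian in the normal direction. Its variance is determined by the restriction of the quadratic form to $e_{\kn}$, namely $\pi\alpha_x$ with $\alpha_x=\omega(e_{\kn},Je_{\kn})$. The half-line integral
\[
\int_{-\sqrt p s}^{\infty}\sqrt{\alpha_x}\,e^{-\pi\alpha_x t^2}dt
\]
then gives $\mathrm{erfc}(-s\sqrt{p\pi\alpha_x})$ with the paper's normalisation \eqref{erfunction}.

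Higher-order terms are odd or even moments of the Gaussian over a half-line, of the form $\int_a^\infty t^k e^{-\pi\alpha t^2}dt$. Integrating by parts reduces each of these to a polynomial in $a$ times $e^{-\pi\alpha a^2}$ plus a multiple of erfc. The erfc parts recombine with the full-space expansion \eqref{BergmanKondiag}. This is because for $s\to+\infty$ the result must match $P_p(x,x)$ by (1)(ii), so the leftover erfc coefficients are forced to be absorbed into $P_p(x,x)$. This bookkeeping is the main obstacle: verifying that the remainder has precisely the form $\sum p^{n-r/2}q_{r,x}(\sqrt p s)e^{-\pi\alpha_x ps^2}$ with coefficients smooth in $x$, uniformly in $s\in]-\eta/2,\eta/2[$. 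I would handle it by writing the difference $T_{W,p}(x,x)-P_p(x,x)\mathrm{erfc}(\cdot)$ directly as a half-space integral of the Gaussian times smooth corrections that vanish to first order. Finally, in the case $J=J_0$ we have $g^{TX}=\omega(\cdot,J\cdot)$, so $\alpha_x=|e_{\kn}|^2=1$.
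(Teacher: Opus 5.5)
Your route is essentially the paper's. Part (1) comes from \eqref{expestimates} plus a uniform volume bound, which is the paper's Lemma~\ref{ThmBishopG}. Part (2) passes to Fermi coordinates, in which $W$ is exactly a half-space, and integrates Gaussian-times-polynomial expansions of $P_p$. Working only on the diagonal and reducing to half-line moments $\int_{-a}^{\infty}t^k e^{-\pi\alpha t^2}dt$ is a legitimate simplification of the paper's off-diagonal Bargmann computation (Corollary~\ref{ToeplitzH}). In $P_p(x,y)P_p(y,x)$ the trivialisation at $y$ cancels, so you never need the transition matrices $\theta_s$.

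\textbf{The leading-term step fails in general.} Put $A_x:=(-J_0^2)^{1/2}(x)$, so that $J_0=JA_x$ and $(\mJ_x^2)^{1/2}=2\pi A_x$. By \eqref{calP}, $\mP_x(0,Z)\mP_x(Z,0)=\mP_x(0,0)^2\exp(-\pi\scal{A_xZ}{Z})$, and $\alpha_x=\omega(e_{\kn},Je_{\kn})=\scal{A_xe_{\kn}}{e_{\kn}}$.
\begin{itemize}
\item Integrating this Gaussian over the tangential directions $Z\perp e_{\kn}$ gives the Schur complement, not the restriction of the form to $e_{\kn}$. The marginal is $\exp(-\pi\beta_x t^2)$ with $\beta_x=1/\scal{A_x^{-1}e_{\kn}}{e_{\kn}}$.
\item By Cauchy--Schwarz, $\beta_x\le\alpha_x$, with equality iff $e_{\kn}$ is an eigenvector of $A_x$.
\item Your computation therefore produces $P_p(x,x)\,\mathrm{erfc}(-s\sqrt{p\pi\beta_x})$. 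For $s\sim p^{-1/2}$ this differs from the claimed leading term at order $p^n$, so the difference cannot be absorbed into the $r\ge1$ terms.
\item Concrete check: take constant data on $\R^4$ with $A=\mathrm{diag}(a,a,b,b)$ in an orthonormal basis $(e_1,Je_1,e_2,Je_2)$, $a\neq b$. Take a compact $W$ whose boundary is flat near some point with normal $(e_1+e_2)/\sqrt2$. Then $P_p$ is exactly the rescaled model kernel, and $\beta=2ab/(a+b)<(a+b)/2=\alpha$.
\end{itemize}
The paper's proof has the same hidden assumption. The block form \eqref{splitJ0} and the factorisation \eqref{calPsplit} rest on ``$H_{x_0}V_s$ is stable by $J$ hence by $J_0$''. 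That holds only if $A_{x_0}$ preserves $H_{x_0}V_s$. So your argument, the paper's, and \eqref{quatre} as written are valid when $J=J_0$, or more generally when $e_{\kn}$ is an eigenvector of $(-J_0^2)^{1/2}$ at the relevant points. Otherwise $\alpha_x$ must be replaced by $\beta_x$ throughout, including in the Gaussian factors of the correction terms.

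Two smaller points.
\begin{itemize}
\item The localisation to $d(x,y)\le p^{-1/2+\epsilon}$ follows from \eqref{expestimates}, not from (1)(ii)--(iii).
\item The step you flagged as the main obstacle is settled most cleanly by the reproducing identity $P_p=P_p^2$. The erfc coefficient of each half-line moment is the corresponding full-line moment. Hence the total erfc coefficient is the full-space integral of the same local expansion, that is, the expansion of $\int_X P_p(x,y)P_p(y,x)\,dv_X(y)=P_p(x,x)$. This is more direct than appealing to (1)(ii) with ``$s\to+\infty$'', since $s$ is bounded.
\end{itemize}
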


The \emph{complementary error function} $\mathrm{erfc}$ appearing in~\eqref{quatre} is defined for $x \in \R$ by the formulas

	\begin{equation}\label{erfunction}
		\mathrm{erfc}(x) := \frac{1}{\sqrt{\pi}} \int_{x}^{+\infty} e^{-t^2}dt = \frac{1}{2}-\mathrm{erf}(x),  \quad \mathrm{with} \quad \mathrm{erf}(x) := \frac{1}{\sqrt{\pi}}\int_0^x e^{-t^2}dt,
	\end{equation}
where the function $\mathrm{erf}(x)$ is called the error function. 
The asymptotics we derive provide the following interpretation: in the semiclassical limit, the kernel $T_{W, p}(x, x)$ on the diagonal mimics the behavior of the Bergman kernel for $x$ in the interior of $W$. Conversely, on the complement $X \setminus W$, $T_{W, p}(x, x)$ exhibits exponential decay in $\sqrt{p}$ times the distance to the boundary. Formula~\eqref{quatre} shows that the transition across the interface is universal. After rescaling by \(p^{-1/2}\), the leading profile is always given by the complementary error function, independently of the geometry of \(W\).  The geometry only enters through the local coefficient \(\alpha_x\). Furthermore, when $s \neq 0$, the sum in the right-hand side of~\eqref{quatre} behaves as $\mO(p^{-\infty})$ and at $s = 0$ (where $x \in V$), half-integer powers of $p$ emerge in the asymptotics of $T_{W, p}(x, x)$. 
\begin{rems} We make the following observations: 
\begin{enumerate}[(i)]
\item For convenience, we intentionally left out a factor $2$ in the definition of the functions $\mathrm{erf}$ and $\mathrm{erfc}$ in \eqref{erfunction}.  
\item In our proof we do not really use the compactness assumption on $W$, and the statement of Theorem \ref{ThondiagonalTwexpansion} should hold if we assume that the boundary $V$ admits a tubular neighborhood $\mU_{\eta}$, and has bounded geometry.
\item The kernel asymptotics we derive exhibit behavior similar to that of the partial Bergman kernels as established by Zelditch and Zhou~\cite{ZZ19}, with "$\mathrm{erfc}$" asymptotics along the boundary $V$ (see \cite[Main Theorem]{ZZ19} for comparison).
\end{enumerate}
\end{rems}

We also obtain the following kernel asymptotics for $g(T_{W, p})$ when $g$ is a polynomial vanishing at $0$ and $1$.
\begin{thm}\label{thmNoyauCompo}
	Assume $(X, g^{TX})$ is complete, $(X, L, E)$ has bounded geometry, i.e., $\inj^X>0$  and the derivatives of any order of $R^L, R^E, J, g^{TX}$ are uniformly bounded on $X$, and assume \eqref{CourbureUnifPositive}. Let $W$ be a compact domain of $X$ with smooth boundary $V$ and $\mU_{\eta}$ a tubular neighborhood of $V$. Let $g$ be a polynomial such that $g(0) = g(1) = 0$. 
	\begin{enumerate}
		\item \label{One} There exist $p_0 \in \N$, $c > 0$ such that for $l \in \N$, there exist $C_{l, g, W} >0$ such that for any $p \geq p_0$, if $x \in X \setminus V$, 
		\begin{equation}
			\left|g(T_{W, p})(x, x)\right|_{\cC^l(X)}  \leq C_{l, g, W}p^{n+l/2}e^{-c\sqrt{p}d(x, V)}.
		\end{equation}
		\item \label{Two} There exist smooth $\mathrm{End}(E)$-valued functions  $I_{r, g, x}(s)$, with fast decay of the form $\mO((1+|s|)^{M_{r, g}}e^{-c's^2})$ as $|s| \rightarrow \infty$, which depend smoothly on $x$ and such that for $x \in \mU_{\eta}$, if $x \in V_s$ for some $s \in ]-\eta/2, \eta/2[$,
			\begin{equation}\label{DevOnDiagCompo}
				g(T_{W, p})(x, x) = \sum_{r = 0}^{\infty} p^{n-r/2} I_{r, g, x}(p^{1/2}s) + \mO(p^{-\infty}),
			\end{equation}
		and the above expansion is uniform on $\mU_{\eta}$. Moreover we have 
			\begin{equation}\label{FirstTermDevOnDiagCompo}
				I_{0, g, x}(s) =   \left[\int_{-\infty}^{\infty} g(\mathrm{erfc}(y))\frac{1}{\sqrt{\pi}}e^{-\left(y+s\sqrt{2\pi \alpha_{x}}\right)^2}dy \right] \det\left({\dot{R}^L/2\pi}\right)\mathrm{Id}_{\mathrm{End}(E)},
			\end{equation}
		and $\alpha_x = \omega(e_{\kn}(x), Je_{\kn}(x))$. If $s=0$, i.e. if $x \in V$ in \eqref{DevOnDiagCompo}, we get
		\begin{equation}
			I_{0, g, x}(0) = \left[ \int_0^1 g(y)dy \right]\det\left({\dot{R}^L/2\pi}\right)\mathrm{Id}_{\mathrm{End}(E)}.
		\end{equation}
		In the  particular case where $J = J_0$, we have $\alpha_x = 1$ and $\det({\dot{R}^L/2\pi}) = 1$.
\end{enumerate}
\end{thm}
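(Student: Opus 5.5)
Since $g(0)=g(1)=0$, I write $g(t)=t(1-t)h(t)$ with $h$ a polynomial, and expand $g$ into monomials $t^k$, $k\ge1$. The plan is to obtain off-diagonal asymptotics for $T_{W,p}^k$ by composing the kernel of $T_{W,p}$ with itself. Concretely,
\[
T_{W,p}^k(x,x')=\int_{W^k} P_p(x,y_1)P_p(y_1,y_2)\cdots P_p(y_k,x')\,dv_X(y_1)\cdots dv_X(y_k),
\]
using $P_p^2=P_p$. Then $g(T_{W,p})$ is a finite linear combination of such iterated integrals.

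\textbf{Part (1).} I use the identities
\[
T_{W,p}-T_{W,p}^2=P_p\bbone_WP_p\bbone_{X\setminus W}P_p,
\qquad
g(T_{W,p})=P_p\bbone_WP_p\bbone_{X\setminus W}P_p\,h(T_{W,p}).
\]
In the kernel, one integration variable ranges over $W$ and another over $X\setminus W$. Fix $x\notin V$, say $x\in W$ at distance $\delta$ from $V$. Any chain $x\to y_1\to\dots\to x$ through a point of $X\setminus W$ has total length at least $2\delta$. The exponential estimate \eqref{expestimates}, together with part (1)(i) of Theorem~\ref{ThondiagonalTwexpansion} for the factors of $h(T_{W,p})$, bounds each kernel by $Cp^{n+l/2}e^{-c\sqrt p\,d}$. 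The triangle inequality then gives a product bounded by $e^{-c'\sqrt p\cdot 2\delta}$ times $e^{-c''\sqrt p\sum d(y_i,y_{i+1})}$. Each integration absorbs a factor $p^{-n}$ from $\int e^{-c''\sqrt p\,d(y,z)}dv(z)\le Cp^{-n}$, which is uniform by bounded geometry. The case $x\in X\setminus W$ is symmetric. Derivatives only fall on the outer kernels and produce the factor $p^{l/2}$.

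\textbf{Part (2).} I localize near $x_0\in V$. By part (1) and the exponential decay, it suffices to integrate over $y_i\in B(x_0,p^{-1/2+\varepsilon})$, at a cost of $\mO(p^{-\infty})$. In this ball I use normal/Fermi coordinates adapted to $V$, so that $W$ becomes $\{Z: \langle Z,e_{\kn}\rangle\ge -s+\text{(curvature of }V)\}$. I then rescale $Z=u/\sqrt p$ and insert the off-diagonal expansion of Theorem~\ref{ThBkernelasymptotics}:
\[
P_p(Z,Z')\sim p^n\sum_r p^{-r/2}J_r(\sqrt pZ,\sqrt pZ')\mathcal P(\sqrt pZ,\sqrt pZ'),
\]
where $\mathcal P$ is the model Gaussian Bergman kernel. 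The rescaled domain is the half-space $\{u_{\kn}\ge -\sqrt p s\}$ plus a boundary perturbation of order $p^{-1/2}|u|^2$. Taylor expanding the indicator of the perturbed domain is not directly possible. Instead I straighten $V$ by a diffeomorphism, which leaves the half-space fixed and moves the $\mO(p^{-1/2})$ corrections into the Jacobian and the kernel arguments. These corrections are polynomial in $u$ and yield the $p^{-r/2}$ terms, with coefficients $I_{r,g,x}(\sqrt ps)$. The Gaussian decay of the model kernel gives the bound $\mO((1+|s|)^Me^{-c's^2})$ for $I_{r,g,x}$ when $r\ge1$, and for $I_0$ as well once $g(0)=g(1)=0$ is used. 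Uniformity over $\mU_\eta$ follows from bounded geometry and compactness of $V$.

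\textbf{Leading term and main obstacle.} Computing $I_0$ is the step I expect to be hardest. At leading order I need $\mathcal P\bbone_H\mathcal P\cdots\bbone_H\mathcal P(0,0)$ for the half-space $H$ in the model Bargmann–Fock space on $\C^n$ with form $\omega_{x_0}$ and complex structure $J$. Splitting $\C^n=\C e\oplus\C^{n-1}$ with $e=e_{\kn}$, the factors for the $\C^{n-1}$ directions are reproduced, since $\mathcal P$ is a projection. This leaves a one-dimensional problem: Toeplitz operators $T_H$ on the Fock space of $\C$ with $H$ a half-plane $\{\Re z\ge a\}$. Its key feature is that $T_H$ commutes with translations in $\Im z$ (magnetic translations). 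Hence $T_H$ is diagonalized by a Fourier/Bargmann transform onto $L^2(\R)$ as a multiplication operator by $\mathrm{erfc}(-\text{const}\cdot(\xi+a))$. So $g(T_H)(0,0)$ equals $\int g(\mathrm{erfc}(y))$ times the squared modulus of the Gaussian coherent state in that representation. That squared modulus is $\pi^{-1/2}e^{-(y+s\sqrt{2\pi\alpha_x})^2}$, where the normalization $\alpha_x=\omega(e,Je)$ comes from identifying the unit normal with the complex coordinate. The leading coefficient $\det(\dot R^L/2\pi)$ is $b_0$. Setting $s=0$ and substituting $t=\mathrm{erfc}(y)$, so that $dt=-\pi^{-1/2}e^{-y^2}dy$, gives $\int_0^1g$. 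The case $k=1$ recovers the $\mathrm{erfc}$ profile of Theorem~\ref{ThondiagonalTwexpansion}, which serves as a consistency check on the constants.
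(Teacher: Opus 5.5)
Your Part (1) and the construction of the expansion in Part (2) follow the paper. The paper also reduces to the crossing form \eqref{TqminusTqp1} by writing $g$ in the basis $X^q-X^{q+1}$, which is equivalent to your $P_p\bbone_WP_p\bbone_{X\setminus W}P_p\,h(T_{W,p})$. It uses \eqref{expestimates} with Lemma~\ref{ThmBishopG}, and then works in Fermi coordinates, where $W$ is exactly $\{x_m>-s\}$, together with Corollary~\ref{ThBKernelasymptoticsFermi}.

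The genuine difference is the leading coefficient. The paper computes it as the explicit Gaussian integral $J_q$ of Lemma~\ref{lemmeJq}. It integrates out the horizontal variables and Fourier transforms in the real parts $x_j$. This produces an equicorrelated Gaussian vector with covariance $\frac12(\mathrm{Id}+\mathrm{Ones})$, whose orthant probabilities it evaluates with the Steck--Owen formula. You instead diagonalize the model half-plane Toeplitz operator. Invariance under magnetic translations along the boundary means that, after the Bargmann transform, it is the anti-Wick quantization of a symbol depending on one momentum variable, i.e. a Fourier multiplier by an $\mathrm{erfc}$. Functional calculus against the coherent state at $0$ then gives $I_{0,g,x}$ for every $g$ at once.

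Your route is shorter and more conceptual. It avoids the matrix inversion and Steck--Owen, and applies to any bounded Borel $g$ in the model. It also explains $I_{0,g,x}(0)=\int_0^1 g$ as a probability integral transform: at $s=0$ the multiplier is the distribution function of the same Gaussian $|\hat\phi_0|^2$. The paper's route stays inside the single integral representation \eqref{ExpressionIq}, which it needs anyway for the existence and decay of the higher coefficients. There the polynomial corrections $J'_{r,x_0}$ break the translation invariance your diagonalization relies on.

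Two small corrections.
\begin{enumerate}
\item The decay $\mO((1+|s|)^{M}e^{-c's^2})$ of $I_{r,g,x}$ for $r\ge1$ is not automatic from the Gaussian decay of the model kernel. For $g(t)=t$, the higher terms contain multiples $b_j(x)\,\mathrm{erfc}(-s\sqrt{\pi\alpha_x})$ coming from \eqref{BergmanKondiag}, and these tend to $b_j(x)$ as $s\to+\infty$. For every $r$, the decay comes from localizing the crossing form of Part (1) rather than the monomials $T_{W,p}^k$. Then each term has one rescaled variable in the half-plane and one in its complement, as in \eqref{rightexpdecay}--\eqref{leftexpdecay}.
\item The $k=1$ consistency check only fixes the mean of the Gaussian weight relative to its width, not the width $1/2$ itself. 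The width must come from the coherent-state computation, and it is exactly what makes $I_{0,g,x}(0)=\int_0^1 g$ hold.
\end{enumerate}
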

 Note that the integral term in \eqref{FirstTermDevOnDiagCompo} is the convolution of $g \circ \mathrm{erfc}$ and a Gaussian. Also note that by combining Theorem \ref{ThondiagonalTwexpansion}, Theorem \ref{thmNoyauCompo} and \eqref{BergmanKondiag} we can get the on-diagonal expansion of $g(T_{W, p})$ for $g$ any polynomial.

From the above Theorem, we deduce asymptotics for the trace of $T_{W, p}$. For $g:[0, 1] \rightarrow \R$, let 
	\begin{equation}
		\tilde{g}(x) := g(x)-g(0)(1-x)-g(1)x.
	\end{equation}
We define $I(g)$ by
	\begin{equation}
		I(g) := \int_{-\infty}^{\infty} \tilde{g}\left(\mathrm{erfc}(x)\right)dx.
	\end{equation}
Let $C(V)$ be the quantity defined by 
	\begin{equation}
		C(V) := \int_{V} \bigg[{\det} (J_{0, x_0}\vert_{HV})\alpha_{x_0}\bigg]^{1/2} dv_V(x_0),
	\end{equation}
with $H_{x_0}V = T_{x_0}V \cap J_{x_0}T_{x_0}V$ the maximal complex subspace of $T_{x_0}V$, and $\alpha_{x_0} = \omega(e_{\kn},Je_{\kn})(x_0)$. As a corollary of Theorem \ref{ThondiagonalTwexpansion} and Theorem \ref{thmNoyauCompo}, the third result of this paper states as follows. 

\begin{cor}\label{ThCEGeneralCase}
	Assume $(X, g^{TX})$ is complete, $(X, L, E)$ has bounded geometry, i.e., $\inj^X>0$  and the derivatives of any order of $R^L, R^E, J, g^{TX}$ are uniformly bounded on $X$, and assume \eqref{CourbureUnifPositive}. Let $W$ be a compact domain of $X$ with smooth boundary $V$. Let $g: [0, 1] \rightarrow \R$ be a polynomial vanishing at $0$.
	We have the expansion

	\begin{equation}
	\begin{aligned}
		&\tr(g(T_{W, p})) = \sum_{k \geq 0} p^{n-k/2}a_{k}(g) + \mO(p^{-\infty}), \quad \mathrm{with}\\
		&a_{0}(g) = g(1)\rank(E)\int_W \det \left(\dot{R}^L/2\pi\right) dv_X, \quad a_{1}(g) = (2\pi)^{-1/2}\rank(E)C(V) I(g).
	\end{aligned}
	\end{equation}
	In particular if $J_0 = J$, then $\alpha_{x_0} = 1$, $C(V) = \vol(V)$, $a_0(g) = g(1) \rank(E)\vol(W)$ and $a_1(g) = \rank(E)\vol(V)I(g)$.
\end{cor}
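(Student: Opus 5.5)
We split $g$ into its affine part and a remainder: $g = g(1)x + \tilde g$, since $g(0)=0$. Here $\tilde g(0)=\tilde g(1)=0$, so Theorem~\ref{thmNoyauCompo} applies to $\tilde g$. By linearity of the trace,
\begin{equation*}
\tr(g(T_{W,p})) = g(1)\tr(T_{W,p}) + \tr(\tilde g(T_{W,p})).
\end{equation*}
All traces are finite: $T_{W,p}=P_p\bbone_W P_p$ is trace class, with $\tr(T_{W,p})=\int_W \tr_E P_p(x,x)\,dv_X$, because $W$ is compact and $P_p(x,x)$ is bounded. The same holds for polynomials without constant term in $T_{W,p}$. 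So the trace equals $\int_X \tr_E[\cdot](x,x)\,dv_X$ of the kernel on the diagonal.

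\textbf{Affine part.} Using $\tr(P_p\bbone_WP_p)=\tr(\bbone_W P_p \bbone_W)$, we get $\tr(T_{W,p})=\int_W\tr_E P_p(x,x)\,dv_X$. Inserting \eqref{BergmanKondiag}, uniform in $x$ over the compact $W$, gives an expansion in integer powers $p^{n-r}$. The leading term is $\rank(E)\int_W\det(\dot R^L/2\pi)\,dv_X$, which gives $a_0(g)$. This part contributes nothing to $p^{n-1/2}$.

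\textbf{Remainder $\tilde g$, away from $V$.} We split $X$ into $\mU_{\eta/2}$ and its complement. On the complement, Theorem~\ref{thmNoyauCompo}(1) gives $|\tilde g(T_{W,p})(x,x)|\le Cp^ne^{-c\sqrt p\,d(x,V)}$. This is $\mO(p^{-\infty})$ uniformly on compact sets at distance $\ge \eta/2$ from $V$, but $X$ may be non-compact, so we also need integrability at infinity. By bounded geometry, the volume of geodesic balls grows at most exponentially: $\vol\{x: d(x,V)\le R\}\le Ce^{CR}$. Hence
\begin{equation*}
\int_{d(x,V)\ge\eta/2} p^n e^{-c\sqrt p\, d(x,V)}\,dv_X = \mO(p^{-\infty}).
\end{equation*}
This non-compactness control is one place where care is needed. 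If needed, one proves the kernel bound for $\tilde g(T_{W,p})$ directly, by composing the exponential kernel estimates of Theorem~\ref{ThondiagonalTwexpansion}(1)(i) and using $\tilde g(0)=0$.

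\textbf{Remainder $\tilde g$, near $V$.} In $\mU_{\eta/2}$ we use Fermi coordinates $x=\exp_y(se_\kn(y))$, with $dv_X = \kappa(y,s)\,dv_V(y)\,ds$, where $\kappa$ is smooth and $\kappa(y,0)=1$. We insert \eqref{DevOnDiagCompo} and substitute $s=p^{-1/2}t$:
\begin{equation*}
\int_V\int_{-\eta\sqrt p/2}^{\eta\sqrt p/2}\sum_r p^{n-(r+1)/2}\tr_E I_{r,\tilde g,x}(t)\,\kappa(y,p^{-1/2}t)\,dt\,dv_V.
\end{equation*}
We then Taylor expand $\kappa$ and the $x$-dependence of $I_r$ in $p^{-1/2}t$. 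The Gaussian decay of $I_r$ makes each moment integrable, and extending the $t$-integrals to $\R$ costs only $\mO(p^{-\infty})$. This yields a series in $p^{n-k/2}$ with $k\ge1$.

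\textbf{Main obstacle: identifying the leading boundary coefficient.} The leading term is $p^{n-1/2}\int_V\int_\R \tr_E I_{0,\tilde g,y}(t)\,dt\,dv_V$. By \eqref{FirstTermDevOnDiagCompo} and Fubini, integrating the Gaussian in $t$ gives
\begin{equation*}
\int_{\R}\frac{1}{\sqrt\pi}e^{-(u+t\sqrt{2\pi\alpha})^2}\,dt=(2\pi\alpha)^{-1/2}.
\end{equation*}
So the $t$-integral equals $(2\pi\alpha_y)^{-1/2}I(g)\,\rank(E)\det(\dot R^L/2\pi)(y)$, using $\tilde g$ in place of $g$ in $I(g)$.

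It remains to prove the pointwise identity on $V$:
\begin{equation*}
\det(\dot R^L/2\pi)\,\alpha^{-1/2}=[\det(J_0|_{HV})\,\alpha]^{1/2}.
\end{equation*}
This is linear algebra at a point. We pick an orthonormal basis adapted to $T_yX = HV\oplus \mathrm{span}(e_\kn,Je_\kn)$. We use $\omega=g(J_0\cdot,\cdot)$, that $J_0$ commutes with $J$, and that $\det(\dot R^L/2\pi)=\det(J_0)^{1/2}$ computed on the real space via the eigenvalues of $-J_0^2$. Since $J_0$ need not preserve $HV$, $\det(J_0|_{HV})$ should be interpreted as the compression. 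A Schur-complement computation then relates $\det(J_0)$ on $T_yX$ to the block on $HV$ times the $2\times2$ normal block, whose relevant entry is $\alpha$. I expect this to require the most care. I would first verify it in the case $J=J_0$, where both sides equal $1$, and then use the block determinant formula.
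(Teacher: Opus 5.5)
Up to the last step your argument follows the paper. You split off $g(1)\tr(T_{W,p})=g(1)\int_W\tr_E P_p(x,x)\,dv_X$ and expand it with \eqref{BergmanKondiag}, which gives $a_0(g)$ and only integer powers. You discard the region away from $V$ using Theorem~\ref{thmNoyauCompo}(1). You then integrate \eqref{DevOnDiagCompo} for $\tilde g$ in Fermi coordinates after $s=p^{-1/2}t$. You are more explicit than the paper about trace class, the affine part and integrability at infinity. Your Fubini computation correctly yields $a_1(g)=(2\pi)^{-1/2}\rank(E)\,I(g)\int_V\det(\dot R^L/2\pi)\,\alpha^{-1/2}\,dv_V$. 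For $J_0=J$ this gives $a_1(g)=(2\pi)^{-1/2}\rank(E)\vol(V)I(g)$; the special case as printed in the statement drops the factor $(2\pi)^{-1/2}$.

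\textbf{The gap: the final pointwise identity.} The Schur-complement route you propose for it cannot work. Write $J_0=JH$ with $H=(-J_0^2)^{1/2}$, a $J$-linear positive Hermitian operator, and split it in blocks along $HV\oplus\mathrm{Span}_{\R}(e_{\kn},Je_{\kn})$. Then $\det(\dot R^L/2\pi)=\det_{\C}H$ and $\alpha=g(He_{\kn},e_{\kn})=H_{22}$. The compression of $J_0$ to $HV$ has real determinant $(\det_{\C}H_{11})^2$. So your identity reads $\det_{\C}H=\det_{\C}H_{11}\cdot H_{22}$. The Schur complement instead gives $\det_{\C}H=\det_{\C}H_{11}\,(H_{22}-H_{12}^*H_{11}^{-1}H_{12})$. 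The two agree iff $H_{12}=0$, i.e.\ iff $HV$ is $J_0$-invariant.

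Concretely, take $\C^2$ with $J=\sqrt{-1}$, $H=\begin{pmatrix}1&b\\ b&1\end{pmatrix}$ with $0<|b|<1$, and $e_{\kn}=e_2$. Then $\alpha=1$ and the compressed determinant is $1$, but $\det(\dot R^L/2\pi)=1-b^2$.

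So the step has to come from the block form \eqref{splitJ0}, which the paper takes as given. There it is one line: $\det J_0=\det(J_0|_{HV})\,\alpha^2$ and $\det(\dot R^L/2\pi)=(\det J_0)^{1/2}$. The paper in fact never needs the conversion. Formula \eqref{exprI0} already carries the factor $\det_{\C}(\mJ_{x_0}|_{HV})(2\pi)^{1-n}\alpha_{x_0}=\det(J_0|_{HV})^{1/2}\alpha_{x_0}$, and $\alpha_{x_0}\int_{\R}J_q(t\sqrt{\alpha_{x_0}})\,dt=\sqrt{\alpha_{x_0}/2\pi}\,C_q$.

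Your remark that $J_0$ need not preserve $HV$ is correct in general: commuting with $J$ does not make $J_0$ preserve every $J$-invariant subspace. But you cannot drop \eqref{splitJ0} while still using \eqref{FirstTermDevOnDiagCompo}. That formula rests on the factorization \eqref{calPsplit}, which needs the same block structure. Without it, the difficulty lies upstream in the leading term itself, not in the final linear algebra.
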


As a consequence of Corollary \ref{ThCEGeneralCase} we obtain the following Weyl law for the Toeplitz operator $T_{W, p}$.

\begin{cor}\label{CorTraceToeplitz}
Assume $(X, g^{TX})$ is complete, $(X, L, E)$ has bounded geometry, i.e., $\inj^X>0$  and the derivatives of any order of $R^L, R^E, J, g^{TX}$ are uniformly bounded on $X$, and assume \eqref{CourbureUnifPositive}. Let $W$ be a compact domain of $X$ with smooth boundary $V$. If $g:[0,1]\rightarrow \R$ is continuous, Hölder continuous at $0$ and $1$ and $g(0) = 0$, we have
	\begin{equation}\label{casegHolder2}
		\tr(g(T_{W, p})) = a_0(g)p^n+ a_1(g)p^{n-1/2} + o(p^{n-1/2}),
	\end{equation}
and for $0<a<b<1$, the concentration of eigenvalues of $T_{W, p}$ in $[a, b]$ verifies
	\begin{equation}\label{repartition_spectre}
		\Big|\spec(T_{W, p})  \cap [a, b] \Big| = p^{n-1/2} \frac{\rank(E)C(V)}{\sqrt{2\pi}} (\mathrm{erfc}^{-1}(a)-\mathrm{erfc}^{-1}(b)) + o(p^{n-1/2}).
	\end{equation}
Moreover, we have, for $0< \vare < 1/2$,
	\begin{equation}\label{SpectrumNear1}
		\Big| \spec(T_{W, p}) \cap [1-\vare, 1]\Big| = p^n \rank(E)\int_W \det \left(\dot{R}^L/2\pi\right) dv_X + o(p^n).
	\end{equation}
In particular if $J_0 = J$, then $C(V) = \vol(V)$ and $\det(\dot{R}^L/2\pi) = 1$.
\end{cor}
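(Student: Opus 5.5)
The plan is to deduce Corollary~\ref{CorTraceToeplitz} from Corollary~\ref{ThCEGeneralCase} by polynomial approximation, in the spirit of Charles--Estienne~\cite{CE20}.

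\textbf{Step 1: linearity and reduction to $\tilde g$.} Both sides of \eqref{casegHolder2} are linear in $g$. For a function of the form $g = g(1)x$, the statement follows from Theorem~\ref{ThondiagonalTwexpansion}. Indeed, $\tr(T_{W,p})=\int_W \tr P_p(x,x)\,dv_X$, and \eqref{BergmanKondiag} then gives $a_0$. In this case $I(g)=0$, so we must check that the boundary-layer contribution vanishes at order $p^{n-1/2}$; this holds since $\tr T_{W,p}=\int_W \tr_E P_p(x,x)dv_X$ exactly. It therefore suffices to treat $\tilde g$, a continuous function vanishing at $0$ and $1$ and Hölder continuous of exponent $\gamma$ there.

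\textbf{Step 2: approximation.} For such $\tilde g$ and $\delta>0$, I will construct polynomials $g_\pm$ with $g_\pm(0)=g_\pm(1)=0$ and
\[
g_-\le \tilde g\le g_+ \quad\text{on } [0,1],\qquad \int_{\R}(g_+-g_-)(\mathrm{erfc}(x))\,dx<\delta .
\]
The construction first sandwiches $\tilde g$ between continuous functions $h_\pm = \tilde g \pm (\delta' + C\,(x(1-x))^{\gamma})$-type envelopes that still vanish at $0$ and $1$ with Hölder control. I then apply Weierstrass to $h_\pm/(x(1-x))$ on a compact subinterval and multiply back by $x(1-x)$.

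The key estimate is that, since $\mathrm{erfc}(x)$ and $1-\mathrm{erfc}(x)$ decay like Gaussians as $x\to\pm\infty$, we have $\int (x(1-x))^{\gamma}\circ \mathrm{erfc}<\infty$ for any $\gamma>0$. Hence the integrals $I(\cdot)$ of the error envelopes are small. This step, controlling $I(g_+-g_-)$ uniformly via the Hölder condition near the endpoints, is where I expect the main technical difficulty.

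\textbf{Step 3: monotonicity of the trace.} Since $T_{W,p}$ is self-adjoint with spectrum in $[0,1]$ and $g_-\le\tilde g\le g_+$, the functional calculus gives
\[
\tr g_-(T_{W,p})\le \tr\tilde g(T_{W,p})\le \tr g_+(T_{W,p}).
\]
These traces are finite because $\tilde g(T_{W,p})$ is trace class, being dominated by $C\,T_{W,p}$ (using $\tilde g(0)=0$ and $T_{W,p}$ trace class by the bound above).

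Applying Corollary~\ref{ThCEGeneralCase} to $g_\pm$, for which $a_0(g_\pm)=0$, gives $\tr g_\pm(T_{W,p})=a_1(g_\pm)p^{n-1/2}+O(p^{n-1})$. We have $|a_1(g_\pm)-a_1(\tilde g)|\le C\delta$. Letting $p\to\infty$ and then $\delta\to0$ yields \eqref{casegHolder2}.

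\textbf{Step 4: counting eigenvalues.} For \eqref{repartition_spectre}, I approximate $\bbone_{[a,b]}$ from above and below by continuous functions supported away from $0$ and $1$; these are trivially Hölder at the endpoints. Changing variables $u=\mathrm{erfc}(x)$ gives
\[
I(\bbone_{[a,b]})=\int \bbone_{[a,b]}(\mathrm{erfc}(x))\,dx=\mathrm{erfc}^{-1}(a)-\mathrm{erfc}^{-1}(b),
\]
and the approximating functions have $I$ arbitrarily close to this value, so the same sandwich argument applies.

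For \eqref{SpectrumNear1}, I use a continuous $g$ equal to $\bbone_{[1-\vare,1]}$ outside small neighborhoods of $1-\vare$. Such $g$ satisfies $g(0)=0$ and $g(1)=1$ and is constant near $1$, hence Hölder. Then $\tr g(T_{W,p})=a_0p^n+O(p^{n-1/2})$, and the discrepancy from the indicator is bounded by eigenvalue counts in $[a,b]\subset]0,1[$, which are $O(p^{n-1/2})$ by \eqref{repartition_spectre}.
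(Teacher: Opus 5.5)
Your overall plan matches the paper's sketch, which defers to Charles--Estienne: subtract $g(1)x$, compare with polynomials vanishing at $0$ and $1$ using positivity of the functional calculus, then approximate indicators. Steps~1 and~4 are fine. The gap is Step~2, which cannot be carried out as stated.

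If $g_+$ is a polynomial with $g_+(0)=0$, then $g_+(x)=xq(x)\le Kx$ on $[0,1]$ for some $K$. So $g_+\ge\tilde g$ fails near $0$ whenever $\tilde g(x)/x\to+\infty$, e.g.\ $\tilde g(x)=x^{\gamma}$ with $\gamma<1$. The motivating example, the entropy $-x\ln x-(1-x)\ln(1-x)$, is already of this type. Your construction shows the symptom: $h_\pm/(x(1-x))$ is unbounded at the endpoints, and Weierstrass on a compact subinterval gives no control on $[0,\varepsilon]\cup[1-\varepsilon,1]$, which is exactly where the difficulty lies. For the same reason the trace-class claim in Step~3 is wrong: you only have $|\tilde g(\lambda)|\le C(\lambda(1-\lambda))^{\gamma}$, not $|\tilde g(\lambda)|\le C\lambda$.

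This cannot be repaired using only the polynomial asymptotics of Corollary~\ref{ThCEGeneralCase}. Adding $N_p=p^ne^{\gamma p}$ eigenvalues at $e^{-p}$ changes the trace of every polynomial vanishing at $0$ by $O(p^{-\infty})$, but changes $\sum\lambda^{\gamma}$ by $p^n$. As written, you prove \eqref{casegHolder2} only when $|\tilde g|\le Cx(1-x)$. Since Step~4 uses only such functions, \eqref{repartition_spectre} and \eqref{SpectrumNear1} do follow.

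What is missing is an a priori bound $\tr\big(T_{W,p}(1-T_{W,p})\big)^{\beta}=O(p^{n-1/2})$ for some $0<\beta<\gamma$; this is where Hölder continuity really enters. Given it, no one-sided sandwich is needed. The function $\tilde g\,(x(1-x))^{-\beta}$ is continuous on $[0,1]$ and vanishes at the endpoints. So, after a cutoff, Weierstrass gives a polynomial $P$ with $|\tilde g(x)-x(1-x)P(x)|\le\epsilon\,(x(1-x))^{\beta}$ on $[0,1]$. Then
\[
p^{-n+1/2}\big|\tr\tilde g(T_{W,p})-\tr\big(T_{W,p}(1-T_{W,p})P(T_{W,p})\big)\big|=O(\epsilon)
\]
and $|I(\tilde g)-I(x(1-x)P)|=O(\epsilon)$, because $\int_{\R}\big(\mathrm{erfc}(1-\mathrm{erfc})\big)^{\beta}<\infty$.

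The bound itself follows from Jensen's (Berezin--Lieb) inequality for the concave function $f_\beta(\lambda)=(\lambda(1-\lambda))^{\beta}$. Apply it in the coherent states $P_p(\cdot,x)v_i$, with $(v_i)$ an orthonormal basis of $E_x$, using $\int_X P_p(\cdot,x)P_p(x,\cdot)\,dv_X(x)=P_p$:
\[
\tr f_\beta(T_{W,p})\le\int_X\sum_i\langle P_p(x,x)v_i,v_i\rangle\,f_\beta\Big(\frac{\langle T_{W,p}(x,x)v_i,v_i\rangle}{\langle P_p(x,x)v_i,v_i\rangle}\Big)\,dv_X(x).
\]
By Theorem~\ref{ThondiagonalTwexpansion}(1) and $P_p(x,x)\ge cp^n$, the ratio inside $f_\beta$ is within $Ce^{-c\sqrt{p}\,d(x,V)}$ of $\bbone_W(x)$. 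Hence the integrand is $O(p^ne^{-c\beta\sqrt{p}\,d(x,V)})$ and the integral is $O(p^{n-1/2})$. This also gives the trace-class property of $\tilde g(T_{W,p})$ on non-compact $X$.
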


The above corollary shows that the operator $T_{W, p}$ asymptotically behaves like a projection: the spectrum of $T_{W, p}$ concentrates near $0$ and $1$ in the semiclassical limit. Note that the kernel of $T_{W, p}$ can be infinite dimensional since $X$ is not necessarily compact.
\begin{rems}
	In the case where $(X, J, \omega, g^{TX})$ is a compact Kähler manifold, $E = \C$, and $(L, h^L)$ is a holomorphic Hermitian prequantum line bundle over $X$, 
	\begin{enumerate}[(i)]
	\item As a particular case of Corollary~\ref{ThCEGeneralCase} and Corollary~\ref{CorTraceToeplitz}, we recover the results established by Charles\textendash Estienne in~\cite[Theorem 1.2, Corollary 1.3]{CE20}. 
	\item The asymptotic behavior of the spectrum of $T_{W, p}$ near $0$ and $1$ has been studied by Berndtsson \cite{Berndtsson03} and Lindholm \cite{Lindholm01}. In particular, we get \eqref{SpectrumNear1} by adapting the proof of \cite[Theorem 3.1]{Berndtsson03}. 
\end{enumerate}
\end{rems}

Our paper is organized as follows. In Section~\ref{SectionBargmann}, we work in the context of the Bargmann space. This setting contains the entire boundary-layer phenomenon that we exhibit. We compute the kernel of the Toeplitz operator whose symbol is the indicator function $\bbone_{\mathbb{H}_s}$ of the half-plane $\mathbb{H}_s := \{z \in \C : \Im(z) > s \}$. We also consider the more general situation where the symbol is the product of $\bbone_{\mathbb{H}_s}$ and a polynomial. The explicit computations performed in Section~\ref{SectionBargmann} provide the local model governing the general kernel asymptotics of Theorem~\ref{ThondiagonalTwexpansion}. In Section~\ref{SectionBergman}, we express in Corollary~\ref{ThBKernelasymptoticsFermi} the  off-diagonal Bergman kernel asymptotics of~\cite[Theorem 4.3]{KMM19} in Fermi coordinates adapted to a tubular neighborhood of the boundary $V$ of $W$. The proofs of our main theorems are contained in Section~\ref{SectionToeplitz}. In \S~\ref{SectionKernToeplitz}, we combine the results of Section~\ref{SectionBargmann} and Section~\ref{SectionBergman} to obtain in Theorem~\ref{ThToeplitzindicasymptoticsFermi} the full off-diagonal kernel asymptotics for $T_{W, p}$. Theorem~\ref{ThondiagonalTwexpansion} is a particular case of this result and of Lemma~\ref{Nonlocalizedasymptotics}. In \S~\ref{PreuveCompoNoyau}, we prove Theorem~\ref{thmNoyauCompo}. Finally, in \S~\ref{tracetoeplitz}, we prove Corollary~\ref{ThCEGeneralCase} and Corollary~\ref{CorTraceToeplitz}. In \S~\ref{Appendix}, we prove a technical result stated in Lemma~\ref{lemmeJq}.
\vspace{0.5cm}
{\paragraph{\textbf{Acknowledgements.}} \mbox{}The author would like to express his gratitude to Professors Xiaonan Ma and George Marinescu, for their guidance, enlightening discussions, and helpful comments. The author is also indebted to Siarhei Finski for the detailed explanations of his work, and to Louis Ioos and the Laboratoire AGM at Cergy Paris Université for their support.}

\section{The case of the Bargmann space \label{SectionBargmann}}

In this section we recall the model case of \cite[\S 1.4]{MM08} for the complex plane and in Sect.~\ref{ToeplitzUpperHalf} we compute the kernel of the Toeplitz operator with symbol $\bbone_{\mathbb{H}_s}$, the indicator function of the \emph{half-plane at distance $s$ of the $x$-axis} $\mathbb{H}_s := \{z \in \C : \Im(z) > s \}$ for $s \in \R$. We then treat the case where the symbol is a product of a polynomial and $\bbone_{\mathbb{H}_s}$. The results are stated in Lemma~\ref{noyauToeplitzInd} and Corollary~\ref{ToeplitzH}. In Sect.~\ref{decayins} we study how the expressions we derive for the kernels behave in terms of the distance to the boundary of $\mathbb{H}_s$. 

This setting is pictured in Figure \ref{fig_demiplan} below and is a good model for what happens at the hypersurface $V_{-s}$ at distance $-s$ of the boundary $V$ of $W$, as in~\eqref{hypersurf}. It will resurface later on in Section \ref{SectionKernToeplitz}. 

\begin{figure}[H]
	\begin{center}
	\begin{tikzpicture}[>=stealth] 
	\fill[gray!20]
       (-3,1) -- (3,1) -- (3,3) -- (-3,3) -- cycle;
  	\draw[->] (-3,0) -- (3,0) node [right] {$\mathrm{Re}(z)$};

  	\draw[->] (0,-1) -- (0,3) node [above] {$\mathrm{Im}(z)$};

 	\fill (0,0) circle (2pt) node[below right] at (0,0) {0};

	\fill (0, 1) circle (2pt) node [below right] {$s$};
	\draw[thick] (-3, 1)--(3, 1) node [right] {$\partial \mathbb{H}_s \leftrightarrow V$};

	\node[] at (1.5,2) {$\mathbb{H}_s$};

	\node[] at (-2, 0) [below] {\small $V_{-s} \leftrightarrow \{\mathrm{Im}(z) = 0\}$};

	\end{tikzpicture}
	\end{center}
	\caption{Analogy with the general setting, for $s > 0$.}
	\label{fig_demiplan}
\end{figure}

Let us denote by $b_{\alpha}$ and $b_{\alpha}^+$ the complex creation and annihilation operators with domain $L^2(\C)$ and defined by, for $\alpha > 0$,

	\begin{equation}
		b_{\alpha} := -2\frac{\partial}{\partial z} + \pi \alpha \overline{z}, \qquad b_{\alpha}^+ := 2 \frac{\partial}{\partial \overline{z}} + \pi \alpha z.
	\end{equation}
Let $\mL := b_{\alpha}b_{\alpha}^+$ be the harmonic oscillator. An orthonormal basis of $\ker \mL \cap L^2(\C)$ is given by the complex Hermite polynomials 

	\begin{equation}
		h_{k, \alpha}(z) = \alpha^{1/2}\sqrt{\frac{\pi^k \alpha^k}{k!}} z^k \exp\left(-\frac{\pi \alpha}{2}|z|^2\right), \quad k \in \N,
	\end{equation}
and the projector $P_{\alpha} : L^2(\C) \rightarrow \ker \mL$ is a kernel operator with kernel given by
	\begin{equation}
		P_{\alpha}(z, z') = \sum_{k=0}^{+\infty}h_{k, \alpha}(z)\overline{h_{k, \alpha}(z')} = \alpha \exp \left(- \frac{\pi \alpha}{2} (|z|^2 + |z'|^2 - 2z \overline{z}')\right).
	\end{equation}

\subsection{Toeplitz operator with symbol the indicator of the half-plane $\mathbb{H}_s$}\label{ToeplitzUpperHalf}

Let $T_{\alpha}^{s} := P_{\alpha} M_{\bbone_{\mathbb{H}_s}}P_{\alpha}$ be the Toeplitz operator with symbol $\bbone_{\mathbb{H}_s}$ and $T_{\alpha}^{s, c}$ the Toeplitz operator with symbol $\bbone_{\mathbb{H}_s^c}$ where $\mathbb{H}_s^c = \C \setminus \mathbb{H}_s$. The error function $\mathrm{erf}$ introduced in~\eqref{erfunction} extends to $\C$ as the holomorphic function defined by 
				
	\begin{equation}\label{erfholo}
		\mathrm{erf}(z) := \frac{1}{\sqrt{\pi}}\int_{0}^z\exp(-\zeta^2)d\zeta.
	\end{equation}
The above integral does not depend on the smooth path from $0$ to $z \in \C$, and the function $\mathrm{erfc}$ introduced in~\eqref{erfunction}  admits an analytic continuation as well. We have the equalities

	\begin{equation}\label{identiteserf}
		\mathrm{erfc}(z) = \frac{1}{2}-\mathrm{erf}(z),\qquad \mathrm{erf}(-z) = -\mathrm{erf}(z), \qquad \mathrm{erfc}(-z) = 1 - \mathrm{erfc}(z),
	\end{equation}

	\begin{equation}\label{deriver}
		\frac{\partial}{\partial z} \mathrm{erfc}(z) = -\frac{1}{\sqrt{\pi}} e^{-z^2}, \qquad \frac{\partial}{\partial z} \left[z \, \mathrm{erfc}(z)-\frac{1}{2\sqrt{\pi}}e^{-z^2}\right] = \mathrm{erfc}(z).
	\end{equation}

\begin{lem}\label{noyauToeplitzInd}
	For $z, z' \in \C$ the following holds

	\begin{equation}\label{Tnoyaudim21}
		T_{\alpha}^{s}(z, z') = P_{\alpha}(z, z')\mathrm{erfc} \left(\sqrt{\pi \alpha}\left(s - \frac{z-\overline{z}'}{2i}\right) \right),
	\end{equation}

	\begin{equation}\label{Tnoyaudim22}
		T_{\alpha}^{s, c}(z, z')  = P_{\alpha}(z, z')\mathrm{erfc}\left(-\sqrt{\pi \alpha}\left(
			s - \frac{z-\overline{z}'}{2i}\right) \right).
	\end{equation}
\end{lem}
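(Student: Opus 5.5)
Since $P_\alpha$ is self-adjoint and idempotent, $T_\alpha^s = P_\alpha M_{\bbone_{\mathbb{H}_s}} P_\alpha$ has kernel
\begin{equation*}
T_\alpha^s(z,z') = \int_{\mathbb{H}_s} P_\alpha(z,w)P_\alpha(w,z')\,dv(w),
\end{equation*}
with $dv$ the Lebesgue measure on $\C$. We compute this Gaussian integral explicitly. Write $w = u + iv$. Then the exponent is
\begin{equation*}
-\tfrac{\pi\alpha}{2}\big(|z|^2+|z'|^2+2|w|^2-2z\overline w-2w\overline z'\big).
\end{equation*}
Factor out $P_\alpha(z,z')$, whose exponent is $-\tfrac{\pi\alpha}{2}(|z|^2+|z'|^2-2z\overline z')$. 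The remaining exponent is $-\pi\alpha\big(|w|^2 - z\overline w - w\overline z' + z\overline z'\big) = -\pi\alpha\,(w-z)(\overline w-\overline z')$. Note that this is not $|w-z|^2$ unless $z=z'$, but it is still a nondegenerate quadratic in $(u,v)$. So
\begin{equation*}
T_\alpha^s(z,z') = P_\alpha(z,z')\,\alpha\int_{v>s}\int_{\R} e^{-\pi\alpha (w-z)(\overline w-\overline z')}\,du\,dv.
\end{equation*}
The sanity check is that the full-plane integral equals $1$, since $P_\alpha^2=P_\alpha$.

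Next expand $(w-z)(\overline w-\overline z')$ in real coordinates. Put $a=(z+z')/2$ and $b=(z-z')/2$, so that $z=a+b$, $z'=a-b$. Shifting $w\mapsto w+a$ (which shifts the half-plane to $v>s-\Im a$) gives
\begin{equation*}
(w-b)(\overline w+\overline b) = |w|^2 + w\overline b - b\overline w - |b|^2 = u^2+v^2 + 2i\,\Im(w\overline b) - |b|^2 .
\end{equation*}
Here $2i\,\Im(w\overline b) = 2i(v\,\Re b - u\,\Im b)$. We then do the $u$-integral over $\R$ by completing the square, $u^2 - 2iu\Im b = (u-i\Im b)^2 + (\Im b)^2$, shifting the contour in $u$ (legitimate by Gaussian decay). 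This gives $\alpha^{-1/2}$ times a factor in $b$. Completing the square in $v$ similarly, $v^2+2iv\Re b = (v+i\Re b)^2 + (\Re b)^2$, and the constants cancel against $-|b|^2$. We are left with
\begin{equation*}
\sqrt{\alpha}\int_{s-\Im a}^{\infty} e^{-\pi\alpha (v+i\Re b)^2}\,dv .
\end{equation*}
Substituting $t=\sqrt{\pi\alpha}\,(v+i\Re b)$ turns this into $\mathrm{erfc}$ evaluated at the lower limit $\sqrt{\pi\alpha}\,(s-\Im a+i\Re b)$, using the holomorphic extension \eqref{erfholo} and path independence. The real-line endpoint at $+\infty$ is justified because the integrand decays in a horizontal strip.

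Finally identify the argument. We have $-\Im a + i\Re b = -\tfrac{1}{2}\Im(z+z') + \tfrac{i}{2}\Re(z-z')$. On the other hand, $\tfrac{z-\overline z'}{2i}$ has real part $\tfrac12\Im(z+z')$ and imaginary part $-\tfrac12\Re(z-z')$. Hence $-\Im a+i\Re b = -\tfrac{z-\overline z'}{2i}$, which gives \eqref{Tnoyaudim21}.

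For \eqref{Tnoyaudim22}, use $T_\alpha^{s}+T_\alpha^{s,c}=P_\alpha$ (up to the measure-zero boundary line) together with $\mathrm{erfc}(-\zeta)=1-\mathrm{erfc}(\zeta)$ from \eqref{identiteserf}.

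The main obstacle is bookkeeping: getting the sign conventions right in the complex shift so that the argument comes out exactly as $s-\tfrac{z-\overline z'}{2i}$, and justifying the contour shifts for complex centres. A quick check is the diagonal case $z=z'=iy$, where the formula should reduce to $\alpha\,\mathrm{erfc}(\sqrt{\pi\alpha}(s-y))$: this is $\approx\alpha$ deep inside $\mathbb{H}_s$ and $\approx 0$ far outside.
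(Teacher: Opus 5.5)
Your proof is correct and follows essentially the same route as the paper. You split the half-plane Gaussian integral by Fubini into a full-line integral in the real part and a half-line integral in the imaginary part, the latter producing $\mathrm{erfc}$, and then use $\mathrm{erfc}(-\zeta)=1-\mathrm{erfc}(\zeta)$ for the complement. The only difference is technical: the paper evaluates the one-dimensional integrals $G$ and $F_s$ for real parameters and extends them by the identity theorem, whereas you factor out $P_\alpha(z,z')$ first and handle the complex linear terms by completing the square and shifting contours.
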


\begin{proof}
		Let $F_a$ and $G$ denote the holomorphic functions defined by, for $a \in \R$,
			
	\begin{equation}
		F_a(z) := \int_{a}^{+\infty} \exp(- \pi \alpha y^2  - \pi \alpha y z)dy, \quad G(z) := \int_{-\infty}^{+\infty} \exp(- \pi  \alpha x^2  - \pi \alpha x z)dx.
	\end{equation}
Notice $F_a$ is the Laplace transform of a Gaussian, and we get for $t \in \R$,
			
	\begin{equation}
		F_a(t) =  \frac{1}{\sqrt{\alpha}}\exp \left(\pi \alpha t^2/4\right) \mathrm{erfc}\left(\sqrt{\pi\alpha}(a+t/2)\right), \quad G(t) = \frac{1}{\sqrt{\alpha}}\exp \left(\pi \alpha t^2/4\right).
	\end{equation}
By the identity theorem, for $z \in \C$ we have 
		
	\begin{equation}
		F_a(z) =  \frac{1}{\sqrt{\alpha}}\exp \left(\pi \alpha z^2/4\right) \mathrm{erfc}\left(\sqrt{\pi \alpha}(a+z/2)\right), \quad G(z) = \frac{1}{\sqrt{\alpha}}\exp \left(\pi \alpha z^2/4\right).
		\label{expressionFaG}
	\end{equation}
By definition we have 

	\begin{equation}
		T_{\alpha}^{s}(z, z') = \int_{\mathbb{H}_s} P_{\alpha}(z, z'')P_{\alpha}(z'', z') d\lambda(z''), 
	\end{equation}
where $\lambda$ denotes the Lebesgue measure. The equality above becomes 
		
	\begin{multline}
		T_{\alpha}^{s}(z, z') = \alpha^2 \exp \left(-\frac{\pi \alpha}{2}(|z|^2 + |z'|^2)\right)\\
		\times \int_{\mathrm{Im}(z'')> s} \exp \left(-\pi \alpha |z''|^2 +  \pi \alpha z\overline{z}'' +  \pi \alpha z'' \overline{z}'\right) d\lambda(z'').
	\end{multline}
By writing $z'' = x'' + iy''$ we get
			
	\begin{equation}
		T_{\alpha}^{s}(z, z') = \alpha^2\exp \left( -\frac{\pi \alpha}{2}(|z|^2 + |z'|^2)\right) G(-(z + \overline{z}'))F_s(i(z-\overline{z}')).
	\end{equation}
By~\eqref{expressionFaG} we get 

	\begin{equation}
		G(-(z + \overline{z}'))F_s(i(z-\overline{z}')) = \alpha^{-1} \exp(\pi \alpha z \overline{z}') \mathrm{erfc} \left(\sqrt{\pi \alpha}\left(s-\frac{z-\overline{z}'}{2i}\right) \right),
	\end{equation}
which proves~\eqref{Tnoyaudim21}. The equality $\mathrm{erfc}(-z) = 1-\mathrm{erfc}(z)$ proves~\eqref{Tnoyaudim22}.
\end{proof}

We now compute the kernel of Toeplitz operators with symbol $\bbone_{\mathbb{H}_s}(z)f(\sqrt{\pi \alpha}z, \sqrt{\pi \alpha}\overline{z})$ where $f$ is a polynomial in $z, \overline{z}$. 
Let us denote by $T^{s}_{\alpha, p, q}$ the Toeplitz operator with symbol \linebreak $z \mapsto {(\sqrt{\pi \alpha}z)}^p {(\sqrt{\pi \alpha}\overline{z})}^q\bbone_{\mathbb{H}_s}$ and $T^{s, c}_{\alpha, p, q}$ defined in the same way for $\bbone_{{\mathbb{H}_s^c}}$. Let $T_{\alpha, p, q}$ be the Toeplitz operator with symbol ${(\sqrt{\pi \alpha}z)}^p {(\sqrt{\pi \alpha}\overline{z})}^q$. Then we have $T^{s}_{\alpha, p, q} + T^{s, c}_{\alpha, p, q} = T_{\alpha, p, q}$. In particular we recover $T^{s}_{\alpha} + T^{s, c}_{\alpha} = P_{\alpha}$.

\begin{lem}
	For $p, q \in \N$ we have 

		\begin{align}
			\begin{split}
				T_{\alpha, p, q}(z, z') &= \alpha \exp\left(- \frac{\pi \alpha}{2}(|z|^2 + |z'|^2)\right)\frac{1}{(\pi \alpha)^{(p+q)/2}}\frac{\partial^p}{\partial \overline{z}'^p}\frac{\partial^q}{\partial z^q}\left[\exp(\pi \alpha z \overline{z}')\right] \\
										&=  (\sqrt{\pi \alpha} z)^p (\sqrt{\pi \alpha} \overline{z}')^qP_{\alpha}(z, z'),
			\end{split}
		\end{align}

		\begin{multline}\label{noyauToeplitzpolynomeH}
			T_{\alpha, p, q}^{s}(z, z') = \alpha\exp \left(-\frac{\pi \alpha}{2}(|z|^2 + |z'|^2)\right) \\
			\times \frac{1}{(\pi \alpha)^{(p+q)/2}}\frac{\partial^p}{\partial \overline{z}'^p}\frac{\partial^q}{\partial z^q}\left[\exp(\pi \alpha z \overline{z}') \mathrm{erfc} \left(\sqrt{\pi \alpha}\left(s-\frac{z-\overline{z}'}{2i}\right)\right)\right].
		\end{multline}
	We also get 
	
		\begin{multline}\label{noyauToeplitzpolynomeHc}
			T^{s, c}_{\alpha, p, q}(z,z') = \alpha\exp \left(-\frac{\pi \alpha}{2}(|z|^2 + |z'|^2)\right) \\
			\times \frac{1}{(\pi \alpha)^{(p+q)/2}}\frac{\partial^p}{\partial \overline{z}'^p}\frac{\partial^q}{\partial z^q}\left[\exp(\pi \alpha z \overline{z}') \mathrm{erfc} \left(-\sqrt{\pi \alpha}\left(s-\frac{z-\overline{z}'}{2i}\right)\right)\right].
		\end{multline}
\end{lem}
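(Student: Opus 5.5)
The plan is to compute the kernels directly from the definition. For $T_{\alpha,p,q}$ and $T^s_{\alpha,p,q}$ we write
\begin{equation*}
T^{s}_{\alpha,p,q}(z,z') = \int_{\mathbb{H}_s} P_\alpha(z,z'')\,(\sqrt{\pi\alpha}z'')^p(\sqrt{\pi\alpha}\overline{z}'')^q\,P_\alpha(z'',z')\,d\lambda(z''),
\end{equation*}
and then expand the two Bergman kernels. The factor $\alpha^2\exp(-\frac{\pi\alpha}{2}(|z|^2+|z'|^2))$ comes out of the integral, which leaves
\begin{equation*}
\int \exp\left(-\pi\alpha|z''|^2+\pi\alpha z\overline{z}''+\pi\alpha z''\overline{z}'\right)(\sqrt{\pi\alpha}z'')^p(\sqrt{\pi\alpha}\overline{z}'')^q\,d\lambda(z'').
\end{equation*}
The key observation is that, at the level of the integrand,
\begin{equation*}
\frac{\partial}{\partial\overline{z}'}e^{\pi\alpha z''\overline{z}'} = \pi\alpha z''\, e^{\pi\alpha z''\overline{z}'}, \qquad \frac{\partial}{\partial z}e^{\pi\alpha z\overline{z}''} = \pi\alpha\overline{z}''\, e^{\pi\alpha z\overline{z}''}.
\end{equation*}
Here $z,\overline{z}'$ are treated as independent variables, and the exponent is holomorphic in $z$ and antiholomorphic in $z'$. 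Consequently
\begin{equation*}
(\sqrt{\pi\alpha}z'')^p(\sqrt{\pi\alpha}\overline{z}'')^q\, e^{\cdots} = (\pi\alpha)^{-(p+q)/2}\,\partial_{\overline{z}'}^p\partial_z^q\, e^{\cdots}.
\end{equation*}
Pulling the derivatives outside the integral leaves the integral already computed in the proof of Lemma~\ref{noyauToeplitzInd}. Over $\C$ that integral equals $\alpha^{-1}\exp(\pi\alpha z\overline{z}')$, and over $\mathbb{H}_s$ it equals $\alpha^{-1}\exp(\pi\alpha z\overline{z}')\,\mathrm{erfc}\big(\sqrt{\pi\alpha}(s-\frac{z-\overline{z}'}{2i})\big)$. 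Multiplying by $\alpha^2$ gives the prefactor $\alpha$, which yields the first line of the formula for $T_{\alpha,p,q}$ and also \eqref{noyauToeplitzpolynomeH}.

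The only analytic point to justify is differentiation under the integral sign. For $(z,\overline{z}')$ in a compact set, the integrand and all its derivatives in these variables are bounded by $C(1+|z''|)^{N}e^{-\pi\alpha|z''|^2+C|z''|}$, which is integrable. Hence the Gaussian integral over $\mathbb{H}_s$ or $\C$ is a holomorphic function of $(z,\overline{z}')$, and differentiation may be exchanged with integration. Restricting the domain to $\mathbb{H}_s$ does not affect this, since the $z''$-domain is independent of $z,z'$. I expect this justification, together with the bookkeeping of treating $z$ and $\overline{z}'$ as independent holomorphic variables, to be the main point of care; the rest is formal.

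For the second line of the formula for $T_{\alpha,p,q}$, we differentiate $\exp(\pi\alpha z\overline{z}')$ explicitly. We have $\partial_z^q e^{\pi\alpha z\overline{z}'} = (\pi\alpha\overline{z}')^q e^{\pi\alpha z\overline{z}'}$, and then $\partial_{\overline{z}'}^p$ of $(\pi\alpha \overline{z}')^q e^{\pi\alpha z\overline{z}'}$ produces the product form. Some care is needed with the ordering here. Since the operator applied to a holomorphic-times-antiholomorphic kernel should give the claimed simple product, I would verify it in this way: the symbol $z''^p\overline{z}''^q$ acting through the Bargmann reproducing property gives $z^p$ from $P_\alpha(z,z'')z''^p$ only when the polynomial is holomorphic, so I would check carefully which factor is reproduced and match it against the stated $(\sqrt{\pi\alpha}z)^p(\sqrt{\pi\alpha}\overline{z}')^q P_\alpha(z,z')$. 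This comparison is done by a direct evaluation of the derivatives, possibly reordering via the commutation $[\partial_{\overline{z}'}, \overline{z}']=1$.

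Finally, \eqref{noyauToeplitzpolynomeHc} follows from $T^{s,c}_{\alpha,p,q}=T_{\alpha,p,q}-T^{s}_{\alpha,p,q}$ together with the identity $1-\mathrm{erfc}(w)=\mathrm{erfc}(-w)$ from \eqref{identiteserf}, applied inside the derivative by linearity.
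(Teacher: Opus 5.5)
Your derivation of the two derivative representations (the first line for $T_{\alpha,p,q}$, and \eqref{noyauToeplitzpolynomeH}) is the paper's argument. You trade $z''^p\overline{z}''^q$ for $(\pi\alpha)^{-(p+q)/2}\partial_{\overline{z}'}^{\,p}\partial_z^{\,q}$ acting on the exponential, pull the derivatives out, and reuse the Gaussian integral from Lemma~\ref{noyauToeplitzInd}. Your domination bound justifying differentiation under the integral is a useful addition that the paper omits. Deducing \eqref{noyauToeplitzpolynomeHc} from $T^{s,c}_{\alpha,p,q}=T_{\alpha,p,q}-T^{s}_{\alpha,p,q}$ and $\mathrm{erfc}(-w)=1-\mathrm{erfc}(w)$ is also valid, provided you use the derivative form of $T_{\alpha,p,q}$. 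The paper instead repeats the computation over $\mathbb{H}_s^c$.

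The gap is the second equality $T_{\alpha,p,q}(z,z')=(\sqrt{\pi\alpha}z)^p(\sqrt{\pi\alpha}\overline{z}')^qP_\alpha(z,z')$. Your step ``$\partial_{\overline{z}'}^{\,p}$ of $(\pi\alpha\overline{z}')^q e^{\pi\alpha z\overline{z}'}$ produces the product form'' is false. By the Leibniz rule,
\[
\frac{1}{(\pi\alpha)^{(p+q)/2}}\,\partial_{\overline{z}'}^{\,p}\partial_z^{\,q}\, e^{\pi\alpha z\overline{z}'}
=\sum_{k=0}^{\min(p,q)} k!\binom{p}{k}\binom{q}{k}\,(\sqrt{\pi\alpha}\,z)^{p-k}(\sqrt{\pi\alpha}\,\overline{z}')^{q-k}\, e^{\pi\alpha z\overline{z}'} .
\]
The commutator $[\partial_{\overline{z}'},\overline{z}']=1$ that you mention is exactly what produces the $k\geq 1$ terms, and no reordering removes them. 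So the product formula holds only when $pq=0$, and the stated equality is false in general.

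A concrete check: take $p=q=1$ and $z=z'=0$. A direct computation gives $T_{\alpha,1,1}(0,0)=\int_{\C}\alpha^2\,\pi\alpha|z''|^2e^{-\pi\alpha|z''|^2}\,d\lambda(z'')=\alpha$, whereas the claimed right-hand side vanishes. This reflects the anti-normal ordering of Toeplitz quantization. The paper's proof only establishes the derivative forms and never argues for this equality either.

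So the step cannot be closed by ``more careful bookkeeping''. Rather than hedging, you should do the two-line Leibniz computation and record the corrected kernel:
\[
T_{\alpha,p,q}(z,z')=\sum_{k=0}^{\min(p,q)} k!\binom{p}{k}\binom{q}{k}(\sqrt{\pi\alpha}z)^{p-k}(\sqrt{\pi\alpha}\overline{z}')^{q-k}P_\alpha(z,z').
\]
The correction is harmless downstream. In Corollary~\ref{ToeplitzH}, $f$ must be replaced by a polynomial with the same top-degree part, and only the ``polynomial times $P_\alpha$'' structure is used later.
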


\begin{proof}
As in the proof of Lemma~\ref{noyauToeplitzInd}, we have 

		\begin{multline}
			T_{\alpha, p, q}^{s}(z, z') = \exp \left(-\frac{\pi \alpha}{2}(|z|^2 + |z'|^2)\right)\\ \times \int_{\mathrm{Im}(z'')> s}\alpha^2 (\pi \alpha)^{(p+q)/2} z''^p \overline{z}''^q\exp \left(-\pi \alpha |z''|^2 +  \pi \alpha z\overline{z}'' +  \pi \alpha z'' \overline{z}'\right) d\lambda(z'').
		\end{multline}
The above equation writes as 

		\begin{multline}
			T_{\alpha, p, q}^{s}(z, z') = \alpha^2\exp \left(-\frac{\pi \alpha}{2}(|z|^2 + |z'|^2)\right) \\ 
			\times \frac{1}{(\pi \alpha)^{(p+q)/2}}\frac{\partial^p}{\partial \overline{z}'^p}\frac{\partial^q}{\partial z^q}\int_{\mathrm{Im}(z'')> s}\exp \left(-\pi \alpha |z''|^2 +  \pi \alpha z\overline{z}'' +  \pi \alpha z'' \overline{z}'\right) d\lambda(z'').
		\end{multline}
We did compute the integral above in the proof of Lemma~\ref{noyauToeplitzInd}, which proves~\eqref{noyauToeplitzpolynomeH}.  The same method proves~\eqref{noyauToeplitzpolynomeHc}.
\end{proof}

The following corollary is a consequence of~\eqref{deriver},~\eqref{noyauToeplitzpolynomeH} and~\eqref{noyauToeplitzpolynomeHc}. 

\begin{cor}\label{ToeplitzH}
	Let $f$ be a polynomial in $\sqrt{\pi \alpha}z, \sqrt{\pi \alpha}\overline{z}$ and $T_{\alpha, f}, T^{s}_{\alpha, f}$ and $T^{s, c}_{\alpha, f}$ the Toeplitz operators with symbols $f, \bbone_{\mathbb{H}_s}f$ and $\bbone_{\mathbb{H}_s^c}f$. Then there exists a polynomial $Q(f)$ of degree less than $\deg f$ such that 

	\begin{equation}\label{Toeplitzpolynome}
		T_{\alpha, f}(z, z') = f(\sqrt{\pi\alpha}z, \sqrt{\pi\alpha} \overline{z}')P_{\alpha}(z, z'),
	\end{equation}

	\begin{multline}\label{ToeplitzpolynomeH}
		T^{s}_{\alpha, f}(z, z') = T_{\alpha, f}(z, z')\mathrm{erfc} \left(\sqrt{\pi \alpha}\left(s-\frac{z-\overline{z}'}{2i}\right)\right) \\
		+ Q(f)(\sqrt{\pi\alpha}z, \sqrt{ \pi\alpha} \overline{z}', \sqrt{\pi \alpha}s)P_{\alpha}(z, z') \exp \left(-\pi \alpha\left(s-\frac{z-\overline{z}'}{2i}\right)^2\right),
	\end{multline}

	\begin{multline}\label{ToeplitzpolynomeHc}
		T^{s, c}_{\alpha, f}(z, z') = T_{\alpha, f}(z, z')\mathrm{erfc} \left( -\sqrt{\pi \alpha}\left(s-\frac{z-\overline{z}'}{2i}\right)\right) \\
		- Q(f)(\sqrt{\pi\alpha}z, \sqrt{ \pi\alpha} \overline{z}', \sqrt{\pi \alpha}s)P_{\alpha}(z, z') \exp \left(-\pi \alpha\left(s-\frac{z-\overline{z}'}{2i}\right)^2\right).
	\end{multline}

\end{cor}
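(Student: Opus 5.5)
By linearity it suffices to treat a single monomial $f=(\sqrt{\pi\alpha}z)^p(\sqrt{\pi\alpha}\overline{z})^q$, so the plan is to start from the formulas of the preceding lemma. Equation \eqref{Toeplitzpolynome} is immediate from the first identity there: it already gives $T_{\alpha,p,q}(z,z')=(\sqrt{\pi\alpha}z)^p(\sqrt{\pi\alpha}\overline{z}')^qP_\alpha(z,z')$, because
\begin{equation*}
\partial_{\overline{z}'}^p\partial_z^q e^{\pi\alpha z\overline{z}'}=(\pi\alpha)^{p+q}\,\overline{z}'^{\,q}z^p e^{\pi\alpha z\overline{z}'}.
\end{equation*}
Here one should check that the roles of $p,q$ (holomorphic versus antiholomorphic variable) are assigned consistently with the integral representation. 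Summing over the monomials then gives \eqref{Toeplitzpolynome} for general $f$.

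For \eqref{ToeplitzpolynomeH} I will expand \eqref{noyauToeplitzpolynomeH} with the Leibniz rule. Set $w:=\sqrt{\pi\alpha}\bigl(s-\frac{z-\overline{z}'}{2i}\bigr)$. Then $\partial_z w=-\sqrt{\pi\alpha}/(2i)$ and $\partial_{\overline{z}'}w=\sqrt{\pi\alpha}/(2i)$ are constants. The term in which no derivative falls on $\mathrm{erfc}(w)$ reproduces $T_{\alpha,f}(z,z')\,\mathrm{erfc}(w)$, by the computation of the first step. Every other term contains at least one derivative of $\mathrm{erfc}(w)$. By \eqref{deriver}, $\partial^k\mathrm{erfc}(w)$ for $k\ge1$ equals $e^{-w^2}$ times a polynomial in $w$ of degree $k-1$ (a Hermite polynomial, up to constants and powers of $\sqrt{\pi\alpha}/(2i)$). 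The remaining derivatives fall on $e^{\pi\alpha z\overline{z}'}$ and produce monomials in $\pi\alpha z,\pi\alpha\overline{z}'$ times $e^{\pi\alpha z\overline{z}'}$. Each derivative either contributes one factor $\sqrt{\pi\alpha}\,\times$(a variable) or is absorbed by $\mathrm{erfc}$, where it raises the polynomial degree by at most one. At least one derivative always hits $\mathrm{erfc}$ and costs one degree, since $\mathrm{erfc}'$ has degree $0$. Hence, after the normalizing factor $(\pi\alpha)^{-(p+q)/2}$, the total degree in $\sqrt{\pi\alpha}z$, $\sqrt{\pi\alpha}\overline{z}'$ and $w$ is at most $p+q-1$.

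Finally, $w$ is itself a polynomial in $\sqrt{\pi\alpha}z$, $\sqrt{\pi\alpha}\overline{z}'$ and $\sqrt{\pi\alpha}s$. Collecting the terms defines $Q(f)$ of degree $<\deg f$, and the prefactor $\alpha e^{-\frac{\pi\alpha}{2}(|z|^2+|z'|^2)}e^{\pi\alpha z\overline{z}'}=P_\alpha(z,z')$ appears in every term, which gives \eqref{ToeplitzpolynomeH}. One also checks that only nonnegative integer powers of $\sqrt{\pi\alpha}$ appear as normalized variables, so no stray $\alpha$-dependence is left in the coefficients of $Q(f)$; the powers of $\sqrt{\pi\alpha}$ match by homogeneity.

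For \eqref{ToeplitzpolynomeHc} I will not redo the computation. Since $T^{s}_{\alpha,f}+T^{s,c}_{\alpha,f}=T_{\alpha,f}$ and $\mathrm{erfc}(-w)=1-\mathrm{erfc}(w)$ by \eqref{identiteserf}, subtracting \eqref{ToeplitzpolynomeH} from \eqref{Toeplitzpolynome} gives \eqref{ToeplitzpolynomeHc}, with the same $Q(f)$ and the opposite sign. The only real care point is the degree bookkeeping in the Leibniz expansion; the rest is routine.
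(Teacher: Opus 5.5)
\textbf{The gap is in your first step.} For \eqref{ToeplitzpolynomeH} and \eqref{ToeplitzpolynomeHc} you follow the paper's route: the Leibniz rule on \eqref{noyauToeplitzpolynomeH} together with \eqref{deriver}. Getting \eqref{ToeplitzpolynomeHc} by subtraction via $\mathrm{erfc}(-w)=1-\mathrm{erfc}(w)$ is a valid shortcut to rerunning the computation with \eqref{noyauToeplitzpolynomeHc}. The problem is the identity
\begin{equation*}
\partial_{\overline{z}'}^p\partial_z^q e^{\pi\alpha z\overline{z}'}=(\pi\alpha)^{p+q}\,\overline{z}'^{\,q}z^p e^{\pi\alpha z\overline{z}'},
\end{equation*}
which is false once $p,q\geq 1$. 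After $\partial_z^q$ has produced the factor $(\pi\alpha\overline{z}')^q$, the derivatives $\partial_{\overline{z}'}$ also hit that factor.

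For $p=q=1$ one gets $(\pi\alpha+(\pi\alpha)^2z\overline{z}')e^{\pi\alpha z\overline{z}'}$, hence $T_{\alpha,1,1}(z,z')=(1+\pi\alpha z\overline{z}')P_\alpha(z,z')$. A direct check confirms this: $T_{\alpha,1,1}(0,0)=\pi\alpha^3\int_{\C}|w|^2e^{-\pi\alpha|w|^2}d\lambda(w)=\alpha\neq 0$, whereas \eqref{Toeplitzpolynome} predicts $0$. Viewing $f=f(u,v)$ as a polynomial in two variables, the correct formula is
\begin{equation*}
T_{\alpha,f}(z,z')=\big(e^{\partial_u\partial_v}f\big)(\sqrt{\pi\alpha}z,\sqrt{\pi\alpha}\,\overline{z}')\,P_\alpha(z,z').
\end{equation*}
This is $f$ plus terms of lower degree. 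It reduces to \eqref{Toeplitzpolynome} exactly when no monomial of $f$ contains both $z$ and $\overline{z}$.

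So \eqref{Toeplitzpolynome} cannot be proved as stated. The second equality in the lemma just before the corollary makes the same slip, so the paper's one-line derivation has the same defect. The check you flagged (which variable carries $p$ and which carries $q$) is not the issue; those roles are right. What is missing are the lower-order cross terms.

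\textbf{The other two formulas survive}, because they are phrased through $T_{\alpha,f}(z,z')$ itself. The Leibniz term with no derivative on $\mathrm{erfc}(w)$ equals $T_{\alpha,f}(z,z')\,\mathrm{erfc}(w)$ by the \emph{first} (correct) line of that lemma, not by the closed form, and you should cite it that way. In your bookkeeping, the derivatives landing on $e^{\pi\alpha z\overline{z}'}$ produce polynomials containing such lower-order cross terms, not monomials. The bound $\deg Q(f)\leq \deg f-1$ is unaffected: every derivative raises the degree by at most one, and the first one falling on $\mathrm{erfc}$ raises it by zero.

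The cleanest way to get both the degree bound and the $\alpha$-independence of the coefficients is to set $u=\sqrt{\pi\alpha}z$ and $v=\sqrt{\pi\alpha}\,\overline{z}'$. Then $(\pi\alpha)^{-(p+q)/2}\partial_{\overline{z}'}^p\partial_z^q=\partial_v^p\partial_u^q$, and everything reduces to $\partial_v^p\partial_u^q\big[e^{uv}\,\mathrm{erfc}\big(\sqrt{\pi\alpha}s-\tfrac{u-v}{2i}\big)\big]$.
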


\begin{rem}
	The formula \eqref{Toeplitzpolynome} is a particular case of the functional calculus developed by Ma--Marinescu as in \cite[\S 7.1]{MM07}. In our case we will use the formulas \eqref{ToeplitzpolynomeH} and \eqref{ToeplitzpolynomeHc}. The error function term $\mathrm{erfc}$ appearing after integration over the half plane $\mathbb{H}_s$ poses a challenge when trying to compute the kernel of the composition $T_{\alpha, f}^s \circ T_{\alpha, g}^s$, with $f, g$ polynomials in $\sqrt{\pi \alpha}z$, $\sqrt{\pi \alpha}\overline{z}$. Therefore, the presence of this error-function profile prevents a direct application of the usual Toeplitz symbolic calculus. Instead, we develop an approach based on explicit kernel representations.
\end{rem}

\subsection{Exponential decay in $s$}\label{decayins}

From the previous computations, we notice that the kernels of the Toeplitz operators $T_{\alpha, f}^s, T_{\alpha, f}^{s, c}$ have a form of exponential decay in $s$ at $(0,0)$, which we state more precisely as an inequality in Corollary~\ref{ineqToeplitzBargmann}. 

As in~\cite{S51}, for $z = x + iy$, by computing the integral in~\eqref{erfholo} along the paths $\Gamma_1(u) = u, \quad u \in [0, x]$ and $\Gamma_2(u) = x + iu, \quad u \in [0, y]$ we get
		
	\begin{equation}
		\mathrm{erf}(z) = \mathrm{erf}(x) + e^{-x^2} \int_0^y e^{u^2} \sin(2xu)du + i e^{-x^2} \int_0^y e^{u^2} \cos(2xu)du.
	\end{equation}
By~\eqref{identiteserf}, we obtain 

	\begin{equation}
		\mathrm{erfc}(z) = \mathrm{erfc}(x) - e^{-x^2} \int_0^y e^{u^2} \sin(2xu)du - i e^{-x^2} \int_0^y e^{u^2} \cos(2xu)du.
	\end{equation}
For $x > 0$, since $0 \leq \mathrm{erfc}(x) \leq e^{-x^2}$ we have the inequality

	\begin{equation}
		|\mathrm{erfc(z)}| \leq e^{-x^2}\left(1 + 2|y|e^{y^2}\right), 
	\end{equation}
which gives if $z = x + iy$, $z' = x' + iy'$ and $y + y' \leq 2s$

	\begin{multline}
		\left|\mathrm{erfc}\left( \sqrt{\pi \alpha}\left(s-\frac{z-\overline{z}'}{2i}\right)\right)\right|  \\
		\leq\left[1 + \sqrt{\pi \alpha}|x-x'|\exp\left(\frac{\pi \alpha}{4}(x-x')^2 \right)\right]\exp \left(-\pi \alpha \left(s- \frac{y + y'}{2}\right)^2\right),
	\end{multline}
on the other hand if $x \leq 0$, $\mathrm{erfc}(-x) \leq e^{-x^2}$, therefore if $y + y' \geq 2s$,

	\begin{multline}
		\left|\mathrm{erfc}\left(- \sqrt{\pi \alpha}\left(s-\frac{z-\overline{z}'}{2i}\right)\right)\right|  \\
		\leq\left[1 + \sqrt{\pi \alpha}|x-x'|\exp\left(\frac{\pi \alpha}{4}(x-x')^2 \right)\right]\exp \left(-\pi \alpha \left(s- \frac{y + y'}{2}\right)^2\right).
	\end{multline}
We also have the following inequality on the Bergman kernel 

	\begin{equation}
		|P_{\alpha}(z, z')| \leq \alpha \exp \left(-\frac{\pi \alpha}{2}(x-x')^2 -\frac{\pi \alpha}{2} (y - y')^2 \right),
	\end{equation}
which gets us if $y+y' \leq 2s$ 

	\begin{equation}
		|T_{\alpha}^s(z,z')| \leq \alpha \left(1 + \sqrt{\pi \alpha}|z-z'|\right)\exp \left(- \frac{\pi \alpha}{4}|z-z'|^2  \right)\exp \left(\frac{\pi \alpha}{4} \left(s- y + s- y'\right)^2\right),
	\end{equation}
and if $y + y' \geq 2s$, 
	\begin{equation}
		|T_{\alpha}^{s, c}(z,z')| \leq \alpha \left(1 + \sqrt{\pi \alpha}|z-z'|\right)\exp \left(- \frac{\pi \alpha}{4}|z-z'|^2  \right)\exp \left(-\frac{\pi \alpha}{4} \left(s- y + s- y'\right)^2\right).
	\end{equation}
In the same way by Corollary~\ref{ToeplitzH}  we get the following result.

\begin{prop}
	For $f$ polynomial there exist $C > 0$, $M_{f} \geq 0$ such that for $\alpha, \alpha' \in \N^2$, if $y+ y' \leq 2s$, 
		\begin{multline}
			\left|T_{\alpha, f}^s(z,z')\right| \leq \alpha C\left(1 + \sqrt{\pi \alpha}|z|+ \sqrt{\pi \alpha}|z'|+\sqrt{\pi \alpha}|s|\right)^{M_{f}} \\
			\times \exp \left(- \frac{\pi \alpha}{4}|z-z'|^2  \right)\exp \left(-\frac{\pi \alpha}{4} (s-y + s-y')^2\right),
		\end{multline}
	If $y + y' \geq 2s$, 
		\begin{multline}
			\left|T_{\alpha, f}^{s, c}(z,z')\right| \leq \alpha C\left(1 + \sqrt{\pi \alpha}|z|+ \sqrt{\pi \alpha}|z'|+\sqrt{\pi\alpha} |s|\right)^{M_{f}} \\
			\times \exp \left(- \frac{\pi \alpha}{4}|z-z'|^2  \right)\exp \left(-\frac{\pi \alpha}{4} (s-y + s-y')^2\right).
		\end{multline}
\end{prop}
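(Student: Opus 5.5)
The plan is to start from the explicit formulas \eqref{ToeplitzpolynomeH} and \eqref{ToeplitzpolynomeHc} of Corollary~\ref{ToeplitzH} and bound each of the two terms separately, using the elementary estimates on $\mathrm{erfc}$ and $P_{\alpha}$ established just above. Set
\begin{equation*}
w := \sqrt{\pi\alpha}\left(s-\frac{z-\overline{z}'}{2i}\right).
\end{equation*}
For $z=x+iy$ and $z'=x'+iy'$ we have $\frac{z-\overline{z}'}{2i} = \frac{y+y'}{2} - i\frac{x-x'}{2}$. Hence
\begin{equation*}
w = \sqrt{\pi\alpha}\Big(s-\tfrac{y+y'}{2}\Big) + i\sqrt{\pi\alpha}\,\tfrac{x-x'}{2}.
\end{equation*}
In particular $\Re(w)\geq 0$ exactly when $y+y'\leq 2s$. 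This sign condition is what allows the bound $|\mathrm{erfc}(\zeta)|\leq e^{-(\Re\zeta)^2}(1+2|\Im \zeta|e^{(\Im\zeta)^2})$, valid for $\Re \zeta\geq 0$, to be applied.

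\emph{First term, case $y+y'\leq 2s$.} Write $T_{\alpha,f}(z,z')=f(\sqrt{\pi\alpha}z,\sqrt{\pi\alpha}\overline{z}')P_\alpha(z,z')$. Combining the $\mathrm{erfc}$ bound with $|P_\alpha(z,z')|\leq \alpha e^{-\frac{\pi\alpha}{2}|z-z'|^2}$ gives three factors:
\begin{enumerate}[(a)]
\item a Gaussian factor $\exp(-\frac{\pi\alpha}{2}|z-z'|^2 + \frac{\pi\alpha}{4}(x-x')^2)\leq \exp(-\frac{\pi\alpha}{4}|z-z'|^2)$;
\item the factor $\exp(-\pi\alpha(s-\frac{y+y'}{2})^2)=\exp(-\frac{\pi\alpha}{4}(s-y+s-y')^2)$;
\item the linear prefactor $1+\sqrt{\pi\alpha}|x-x'|$.
\end{enumerate}
Since $|x-x'|\leq |z|+|z'|$, the prefactor in (c) and $|f(\sqrt{\pi\alpha}z,\sqrt{\pi\alpha}\overline{z}')|$ are both dominated by $C(1+\sqrt{\pi\alpha}|z|+\sqrt{\pi\alpha}|z'|)^{\deg f+1}$.

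\emph{Second term, same case.} Here the only computation needed is $|e^{-w^2}|=e^{-\Re(w^2)}$, which equals
\begin{equation*}
\exp\Big(-\pi\alpha\big(s-\tfrac{y+y'}{2}\big)^2+\tfrac{\pi\alpha}{4}(x-x')^2\Big).
\end{equation*}
Multiplied by the bound on $|P_\alpha|$, this yields the same two Gaussian factors (a) and (b) as the first term. The polynomial $Q(f)(\sqrt{\pi\alpha}z,\sqrt{\pi\alpha}\overline{z}',\sqrt{\pi\alpha}s)$ is bounded by $C(1+\sqrt{\pi\alpha}|z|+\sqrt{\pi\alpha}|z'|+\sqrt{\pi\alpha}|s|)^{\deg Q(f)}$. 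Taking $M_f:=\deg f+1$ (this dominates $\deg Q(f)$, which is less than $\deg f$) and summing the two contributions gives the first inequality.

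\emph{Case $y+y'\geq 2s$.} The argument is symmetric. We use \eqref{ToeplitzpolynomeHc}, where $\mathrm{erfc}(-w)$ appears and now $\Re(-w)\geq 0$. The $e^{-w^2}$ term is unchanged up to sign, so the same estimates apply verbatim.

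No step is a genuine obstacle. The only points that need care are bookkeeping: the Gaussian growth $e^{\frac{\pi\alpha}{4}(x-x')^2}$ coming from $\Im w$ must be absorbed by half of the decay of $P_\alpha$, and the correct half of the complex plane must be chosen for the $\mathrm{erfc}$ estimate in each case.
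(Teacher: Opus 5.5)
Your proposal is correct and follows the paper's route. The paper carries out exactly this estimate for $f=1$ (the $\mathrm{erfc}$ bound on the half-plane $\Re\zeta\ge 0$ selected by the sign of $2s-y-y'$, with the growth $e^{\frac{\pi\alpha}{4}(x-x')^2}$ absorbed by half of the decay of $|P_{\alpha}|$) and then invokes Corollary~\ref{ToeplitzH} for general $f$; your handling of the $Q(f)e^{-w^2}$ term via $|e^{-w^2}|=e^{-(\Re w)^2+(\Im w)^2}$ makes explicit the bookkeeping the paper leaves to the reader.
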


We can simplify the above proposition to state the following corollary.

\begin{cor}\label{ineqToeplitzBargmann}
	If $s \geq 0$, $\Im(z) \leq 0, \Im(z') \leq 0$,

	\begin{equation}
		\left|T_{\alpha, f}^s(z,z')\right| \leq \alpha Ce^{-\pi \alpha s^2}\left(1 + \sqrt{\pi \alpha}|z|+ \sqrt{\pi \alpha}|z'|+\sqrt{\pi\alpha} |s|\right)^{M_{f}} \exp \left(- \frac{\pi \alpha}{4}|z-z'|^2  \right).
	\end{equation}
If $s \leq 0$, $\Im(z) \geq 0, \Im(z') \geq 0$,

	\begin{equation}
		\left|T_{\alpha, f}^{s,c}(z,z')\right| \leq \alpha Ce^{-\pi \alpha s^2}\left(1 + \sqrt{\pi \alpha}|z|+ \sqrt{\pi \alpha}|z'|+\sqrt{\pi \alpha} |s|\right)^{M_{f}} \exp \left(- \frac{\pi \alpha}{4}|z-z'|^2  \right).
	\end{equation}
\end{cor}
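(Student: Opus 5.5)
We deduce Corollary~\ref{ineqToeplitzBargmann} directly from the preceding Proposition; the only real work is to check that the sign hypotheses turn the Gaussian factor $\exp\bigl(-\frac{\pi\alpha}{4}(s-y+s-y')^2\bigr)$ into $e^{-\pi\alpha s^2}$.

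\emph{First case:} $s\ge 0$, $y=\Im(z)\le 0$ and $y'=\Im(z')\le 0$. Then $y+y'\le 0\le 2s$, so the first inequality of the Proposition applies to $T^s_{\alpha,f}$. Moreover $s-y\ge s\ge 0$ and $s-y'\ge s\ge 0$, hence $(s-y)+(s-y')\ge 2s\ge 0$. Squaring this monotone inequality between nonnegative numbers gives
\begin{equation*}
(s-y+s-y')^2\ge 4s^2 .
\end{equation*}
Therefore
\begin{equation*}
\exp\Bigl(-\tfrac{\pi\alpha}{4}(s-y+s-y')^2\Bigr)\le e^{-\pi\alpha s^2}.
\end{equation*}
Substituting this into the Proposition gives the first bound, with the same constant $C$ and exponent $M_f$.

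\emph{Second case:} $s\le 0$, $y\ge 0$ and $y'\ge 0$. Then $y+y'\ge 0\ge 2s$, so the second inequality of the Proposition applies to $T^{s,c}_{\alpha,f}$. Here $y-s\ge -s=|s|$ and $y'-s\ge |s|$, so
\begin{equation*}
|s-y+s-y'| = (y-s)+(y'-s)\ge 2|s| .
\end{equation*}
Squaring again gives $(s-y+s-y')^2\ge 4s^2$, and we conclude exactly as in the first case.

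There is no genuine obstacle here. The point to be careful about is that we square only after establishing a lower bound on the absolute value $|2s-y-y'|$; both case hypotheses (the sign of $s$ and the signs of the imaginary parts) are needed for that lower bound. The polynomial prefactor $\bigl(1+\sqrt{\pi\alpha}(|z|+|z'|+|s|)\bigr)^{M_f}$ and the factor $\exp\bigl(-\frac{\pi\alpha}{4}|z-z'|^2\bigr)$ carry over unchanged.
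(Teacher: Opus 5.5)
Your proposal is correct and matches the paper's route: the paper simply says the corollary is obtained by specializing the preceding Proposition. You supply the check it leaves implicit, namely that the case hypotheses give $y+y'\le 2s$ (resp.\ $y+y'\ge 2s$) and $(2s-y-y')^2\ge 4s^2$, hence $\exp\bigl(-\frac{\pi\alpha}{4}(s-y+s-y')^2\bigr)\le e^{-\pi\alpha s^2}$.
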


\begin{rem}
	We could still get a version of Corollary~\ref{ineqToeplitzBargmann} that holds for all values of $z, z'$ by replacing the inequality $0 \leq \mathrm{erfc}(x) \leq e^{-x^2}$ by $0 \leq \mathrm{erfc}(x) \leq 1 + e^{-x^2}$ in the proof, however we would lose the decay in $s$ by doing so. Note that the inequality we obtain may not be preserved under the composition of Toeplitz operators.
\end{rem}

\section{Bergman kernel asymptotics}\label{SectionBergman}

This section is organized as follows. In Sect.~\ref{AsymptoticsKMM} we enunciate the off-diagonal expansion of the generalized Bergman kernel obtained by Kordyukov\textendash Ma\textendash Marinescu in \cite{KMM19}, which is a more detailed statement than the expansion~\eqref{BergmanKondiag} given in the introduction. In Sect.~\ref{GenBergmanFermi} we give the expression of the Bergman kernel in so-called Fermi coordinates adapted to a tubular neighborhood of the boundary $V$ of $W$. In Sect.~\ref{AsymptoticsDiffeoFermi} we compute the asymptotic expansion of the correspondence between Fermi and normal coordinates. In Sect.~\ref{AsymptoticsGenBergmanFermi} we derive asymptotics for the generalized Bergman kernel in Fermi coordinates from the off-diagonal expansion described Sect.~\ref{AsymptoticsKMM}. The technique is adapted from Finski~\cite[\S 5.2]{Finski24}. In the rest of the paper we will also use the term ``Bergman kernel'' to refer to the generalized Bergman kernel. 

We quickly recall the context of \cite{KMM19} as in the introduction. Let $(X, \omega)$ be a symplectic manifold of dimension $m=2n$ endowed with $(L, h^L)$ a Hermitian line bundle and $(E, h^E)$ a Hermitian vector bundle, with Hermitian connections $\nabla^L$ and $\nabla^E$ whose curvatures are denoted $R^L$ and $R^E$. We assume $L$ satisfies the prequantization condition \eqref{prequantum}. 

We endow $(X, \omega)$ with a Riemannian metric $g^{TX}$ with Levi-Civita connection $\nabla^{TX}$, and curvature $R^{TX}:= (\nabla^{TX})^2$. Let $J:TX \rightarrow TX$ be an almost-complex structure on $X$ compatible with $\omega$ and $g^{TX}$, that is $\omega(J \cdot, J \cdot) = \omega(\cdot, \cdot)$, $g^{TX}(J \cdot, J \cdot) = g^{TX}(\cdot, \cdot)$ and $\omega(u, Ju) > 0$ for any $u \in TX, u \neq 0$. Let $J_0: TX \rightarrow TX$ be the skew-adjoint linear map satisfying $\omega(u, v) =g^{TX}(J_0u, v)$ for all $u, v \in TX$. Then we have $J = J_0(-J_0^2)^{-1/2}$, and $J_0$ commutes with $J$  (see \cite[Introduction]{MM08}). 

We assume $X$ has \emph{bounded geometry}, i.e., $\inj^X > 0$ and the derivatives of any order of $R^L, R^E, J, g^{TX}$ are uniformly bounded on $X$. We also assume 
\begin{equation}
		\mu_0 := \underset{x \in X, u \in T_xX \setminus \{0\}}{\inf} {\frac{\sqrt{-1} R_x^L(u, Ju)}{|u|_{g_x^{TX}}^2}} > 0.
\end{equation}

Let $\Delta^{L^p \otimes E}$ be the \emph{Bochner Laplacian} acting on sections of $L^p \otimes E$ associated with the connections $\nabla^{TX}, \nabla^L, \nabla^E$ and defined by the formula 

\begin{equation}\label{BochnerLapl}
	\Delta^{L^p \otimes E} := - \sum_{i=1}^m \left[ (\nabla^{L^p \otimes E}_{e_i})^2 - \nabla^{L^p \otimes E}_{\nabla^{TX}_{e_i}e_i} \right],
\end{equation}
where $(e_i)_{i \in \{1, \ldots, m\}}$ is a local orthonormal frame of $TX$ and $\nabla^{L^p \otimes E}$ denotes the connection on $L^p \otimes E$ associated with $\nabla^L, \nabla^E$. Then the \emph{renormalized Bochner Laplacian} defined as in \eqref{RenormBochnerL} has spectral gap \eqref{rBochnerspectralgap}. For $p$ large enough, we define the \emph{generalized Bergman projector} $P_p$ as in \eqref{Bergmanprojector}. By the Schwartz kernel theorem, it is a kernel operator with respect to the Riemannian volume form $dv_X$ of $(X, g^{TX})$ with kernel the \emph{generalized Bergman kernel} $P_p( \cdot, \cdot) \in \cC^{\infty}(X \times X, (L^p \otimes E) \boxtimes {(L^p \otimes E)}^*)$. As in \cite[Remark 1.4.3, Problem 1.5]{MM07}, the smoothness of $P_p$ comes from the ellipticity of the operator $\Delta_p$. 

\begin{rems}The asymptotic expansion of the Bergman kernel has a long history:
\begin{enumerate}
	\item[(i)] In the case where $(X, J, \omega, g^{TX})$ is a compact Kähler manifold, $E = \C$ and $(L, h^L)$ is a holomorphic Hermitian prequantum line bundle over $X$,  the renormalised Bochner Laplacian $\Delta_p$ defined in \eqref{RenormBochnerL} coincides with twice the Kodaira Laplacian $\square_p = \overline{\partial}^{L^p *} \overline{\partial}^{L^p}$, and $\mH_p([-C_L, C_L])$ coincides with $\Ker \square_p = H^0(X, L^p)$ the space of holomorphic sections of $L^p$. In that case, the kernel of the projector $P_p$ is called the Bergman kernel, $b_0 = \Id$, and the diagonal expansion~\eqref{BergmanKondiag} is a well known result from the works of Tian~\cite{Tian}, Catlin~\cite{Catlin} and Zelditch~\cite{Zeld}, Dai--Liu--Ma~\cite{DLM06}, Ma--Marinescu~\cite{MM08}, with many other contributions.
	\item[(ii)] When $(X,\omega)$ is compact symplectic and $P_p$ is the projection on the kernel of the $\mathrm{spin}^c$-Dirac operator, the full off-diagonal expansion of the Bergman kernel $P_p$ was established by Dai\textendash Liu\textendash Ma \cite[Theorem 4.18]{DLM06}. The expansions of Ma\textendash Marinescu and Kordyukov--Ma--Marinescu for the generalized Bergman kernel in the noncompact case are of the same nature.
\end{enumerate}
\end{rems}
\subsection{Off-diagonal asymptotics of the generalized Bergman kernel}\label{AsymptoticsKMM}

In~\cite[Theorem 5]{DLM06}, Dai--Liu--Ma established a full off-diagonal expansion for the Bergman kernel of the $\mathrm{spin}^c$-Dirac operator. In~\cite[Theorem 1.19]{MM08}, Ma--Marinescu obtained an expansion for the generalized Bergman kernel in a neighborhood of size $\vare/\sqrt{p}$ of the diagonal, called near off-diagonal expansion. This result has been improved by Kordyukov--Ma--Marinescu in~\cite[Theorem 4.3]{KMM19} to an off-diagonal expansion similar to the one of Dai--Liu--Ma. We recall the result of Kordyukov--Ma--Marinescu in this paragraph.

Let $x_0 \in X$, $\vare < \inj^X$ and $B^X(x_0, \vare)$ be a normal coordinate ball centered at $x_0$ of radius $\vare$. We identify elements of $B^X(x_0, \vare)$ and $B^{T_{x_0}X}(0, \vare)$ by the diffeomorphism given by the exponential map $\exp^X_{x_0} : B^{T_{x_0}X}(0, \vare) \rightarrow B^X(x_0, \vare)$. In the rest of the paper we denote $\varp_{x_0}(Z) :=  \exp^X_{x_0}(Z)$. For $Z \in B^{T_{x_0}X}(0, \vare)$, the fibers $E_Z$, $L_Z$, $T_ZX$ and their duals, tensor products and exterior products are identified to $E_{x_0}, L_{x_0}$ and $T_{x_0}X$ by parallel transport along the curve $[0, 1] \ni t \mapsto tZ$ for the connections $\nabla^E$, $\nabla^L$ and $\nabla^{TX}$ and the connections they induce. Under this identification there exists a function $\kappa_{x_0}$ such that the localized volume form $dv_{X_0}$ verifies

	\begin{equation}
		dv_{X_0}(Z) = \kappa_{x_0}(Z)dv_{T_{x_0}X}(Z), \quad \mathrm{with} \quad \kappa_{x_0}(0) = 1,
	\end{equation} 
and where $dv_{T_{x_0}X}(Z)$ denotes the Riemannian volume form of $(T_{x_0}X, g^{T_{x_0}X})$. Still under this identification, the localized Bergman kernel $P_{p, x_0}(Z, Z')$ becomes a smooth function mapping $Z, Z'\in B^{T_{x_0}X}(0, \vare)$ to an element of $(L^p \otimes E)_{x_0} \otimes (L^p \otimes E)^*_{x_0} \simeq \C \otimes \End(E)_{x_0}$.
Let $\mJ_{x_0}$ be the endomorphism
	
	\begin{equation}\label{calJ0}
		\mJ_{x_0} = -2\pi \sqrt{-1} J_0(x_0),
	\end{equation}
then $\mJ_{x_0}: T^{(1, 0)}_{x_0} X \rightarrow T^{(1, 0)}_{x_0} X$ is positive and $\mJ_{x_0} : T_{x_0}X \rightarrow T_{x_0} X$ is skew-adjoint. Let $\mP = \mP_{x_0} \in \cC^{\infty}(T_{x_0} X \times T_{x_0} X)$ be the kernel defined by
	
	\begin{equation}\label{calP}
		\mP_{x_0}(Z, Z') := \frac{\det_{\C} \mJ_{x_0}}{(2\pi)^n} \exp \left( -\frac{1}{4} \scal{(\mJ_{x_0}^2)^{1/2}(Z-Z')}{(Z-Z')} + \frac{1}{2}\scal{\mJ_{x_0} Z}{Z'}\right).
	\end{equation}
It corresponds to the Bergman kernel of the operator $\mL$ on $\C^n$. By~\cite[Theorem 4.3]{KMM19}, we have the following off-diagonal expansion for the generalized Bergman kernel, as an analogue of \cite[Theorem 4.18]{DLM06}.

\begin{thm}[{\cite[Theorem 4.3]{KMM19}}]\label{ThBkernelasymptotics}
	There exists $\vare \in ]0, \inj^X[$ such that for any $j, l, l' \in \N$, there exist $C, c_0, M > 0$ such that for any $p \geq 1, x_0 \in X$ and $Z, Z' \in T_{x_0} X, |Z|, |Z'| < \vare$ we have
			 
		\begin{multline}
		        \sup_{|\alpha| + |\alpha'| \leq l}\left| \frac{\partial^{|\alpha| + |\alpha'|}}{\partial Z^{\alpha} \partial Z'^{\alpha'}} \left( p^{-n} P_{p, x_0}(Z, Z') -  \sum_{r=0}^j \mF_{r, x_0}( \sqrt{p}Z, \sqrt{p}Z') \kappa_{x_0}^{-1/2}(Z) \kappa_{x_0}^{-1/2}(Z') p^{-r/2} \right) \right|_{\cC^{l'}(X)} \\
		       \leq C p^{-\frac{j-l+1}{2}} (1 + \sqrt{p}|Z| + \sqrt{p}|Z'|)^{M} \exp(-\sqrt{\mu_0 p}|Z-Z'|) + \mO(e^{-c_0 \sqrt{p}}), \\
		\end{multline}
with $| \ . \ |_{\cC^{l'}(X)}$ the $\cC^{l'}$ norm for the parameter $x_0 \in X$,  

		\begin{equation}
			\mF_{r, x_0}(Z, Z') := J_{r, x_0}(Z, Z')\mP_{x_0}(Z, Z'), \quad J_{0, x_0}(Z, Z') = \Id_{E_{x_0}},
		\end{equation}	
the terms $J_{r, x_0}(Z, Z')$ are $\mathrm{End}(E)_{x_0}$-valued polynomials in $Z, Z'$ with the same parity as $r$, and verify $\deg J_{r, x_0}(Z, Z') \leq 3r$, their coefficients are polynomials in $R^{TX}, R^E, R^L$ and their derivatives of order $\leq r-2$ at $x_0$ and reciprocals of linear combinations of eigenvalues of $\mJ_{x_0}$.
\end{thm}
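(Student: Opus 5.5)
The plan is to follow the analytic localization technique of Dai--Liu--Ma and Ma--Marinescu, adapted to bounded geometry as in Kordyukov--Ma--Marinescu. I would first reduce the problem to a model on $T_{x_0}X\simeq\R^{2n}$, then rescale by $t=1/\sqrt p$, and finally read off the coefficients $\mF_{r,x_0}$ from a Taylor expansion in $t$ of the rescaled operator. All constants must be uniform in $x_0$; this will come from bounded geometry, since every local quantity ($\kappa_{x_0}$, the connection forms in the parallel-transport trivialization, $J$, $g^{TX}$) has $\cC^k$ norms bounded independently of $x_0$.

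\emph{Step 1 (localization).} Fix $\vare<\inj^X/4$. On $B^{T_{x_0}X}(0,2\vare)$, transport $\Delta_p$ via $\varp_{x_0}$ and the parallel-transport trivialization of $L,E$, using the Poincaré gauge $\nabla^L=d+\frac12R^L_{x_0}(Z,\cdot)+\mO(|Z|^2)$. Glue it with a cutoff to a harmonic-oscillator-type operator outside the ball, which gives an operator $\mL_{p,x_0}$ on all of $\R^{2n}$ with the same uniform spectral gap \eqref{rBochnerspectralgap}. I would write both spectral projectors as contour integrals $\frac{1}{2\pi i}\int_\Gamma(\lambda-\cdot)^{-1}d\lambda$ around $[-C_L,C_L]$. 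Using finite propagation speed applied to smooth functions of $\Delta_p$ (or directly the exponential estimate \eqref{expestimates} combined with weighted resolvent estimates $e^{c\sqrt p\, d(\cdot,x_0)}$), the kernels of $P_p$ and of the model projector $P_{p,x_0}^{\rm loc}$ agree on $|Z|,|Z'|<\vare$ up to $\mO(e^{-c_0\sqrt p})$ in every $\cC^l$ norm. This is the origin of the additive error term.

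\emph{Step 2 (rescaling and Taylor expansion).} Conjugate by the dilation $S_t u(Z)=u(Z/t)$ and multiply by $t^2$. This gives $\mL_t=\mL_0+\sum_{r\ge1}t^r\mO_r+\mO(t^{j+1})$, where $\mL_0=-\sum_i(\partial_i+\frac12R^L_{x_0}(Z,e_i))^2-\tau(x_0)$ is the model operator, whose kernel projector is exactly $\mP_{x_0}$ in \eqref{calP}. The $\mO_r$ are second-order operators with polynomial coefficients of degree $\le r+2$, built from derivatives of $R^{TX},R^E,R^L$ at $x_0$. The projector $P_t=\frac{1}{2\pi i}\int_\Gamma(\lambda-\mL_t)^{-1}d\lambda$ is then expanded as $\sum_r t^r\mF_r+\mO(t^{j+1})$, where $\mF_r$ is obtained from iterated resolvents $(\lambda-\mL_0)^{-1}\mO_{r_1}(\lambda-\mL_0)^{-1}\cdots$. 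Using the explicit spectral decomposition of $\mL_0$ (creation and annihilation operators $b_i,b_i^+$, as in Section~\ref{SectionBargmann}), each $\mF_r$ equals $J_r\mP_{x_0}$ with $J_r$ a polynomial. Degree counting gives $\deg J_r\le 3r$. Parity follows from $Z\mapsto -Z$ symmetry, since $\mO_r$ has parity $r$. Inverting the eigenvalue gaps of $\mL_0$ produces the reciprocals of linear combinations of eigenvalues of $\mJ_{x_0}$. The factors $\kappa^{-1/2}$ appear because one works with the unitary identification of $L^2(dv_X)$ with $L^2(dv_{T_{x_0}X})$.

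\emph{Step 3 (remainder estimates; main obstacle).} The hard part is turning the formal expansion into kernel bounds with the weight $(1+\sqrt p|Z|+\sqrt p|Z'|)^M e^{-\sqrt{\mu_0 p}|Z-Z'|}$ in every $\cC^l$ norm, including derivatives in $x_0$. I would follow \cite{DLM06} and \cite{MM07}:
\begin{itemize}
\item prove Sobolev estimates $\|(\lambda-\mL_t)^{-1}\|_{H^m_t\to H^{m+1}_t}\le C(1+|\lambda|)^N$, uniform in $t$ and $x_0$, with weights $e^{a|Z|}$ and polynomial weights in $Z$, by commuting with $Z^\beta$ and $\partial_Z$;
\item use Sobolev embedding to pass to $\cC^l$ kernel bounds;
\item obtain the off-diagonal exponential factor by conjugating with $e^{\sqrt{\mu_0}\langle v,Z\rangle}$ and using that the spectral gap persists under such a bounded perturbation for $|v|<1$;
\item get derivatives in $x_0$ by differentiating the resolvent identity, since $\partial_{x_0}\mL_t$ is again controlled by bounded geometry.
\end{itemize}
Undoing the scaling turns a $t^{j+1}$ remainder with $l$ derivatives into $p^{-(j-l+1)/2}$, which is the stated bound.
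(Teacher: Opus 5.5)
The paper does not prove this statement; it quotes it from Kordyukov--Ma--Marinescu. Your outline is the Dai--Liu--Ma/Ma--Marinescu scheme that the cited proof follows: localization to a glued model operator on $\R^{2n}$, rescaling by $t=1/\sqrt{p}$, Taylor expansion of $\mL_t$, contour-integral expansion of the projector computed with the harmonic oscillator, and weighted Sobolev/resolvent estimates for the remainder. The $\mO(e^{-c_0\sqrt{p}})$ term has to come from your second option, the exponential estimate \eqref{expestimates}, because the finite-propagation-speed localization only gives $\mO(p^{-\infty})$.

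One point should be tightened, the off-diagonal weight. Conjugation by $e^{\sqrt{\mu_0}\langle v,Z\rangle}$ adds a first-order term proportional to $\sqrt{\mu_0}\langle v,\nabla_t\rangle$, which is relatively bounded, not bounded. A Neumann series on the contour therefore only works for $|v|$ small, not for all $|v|<1$, and gives a decay rate $c\sqrt{\mu_0}$ for some $c\in(0,1]$. This is the form the paper actually uses later, in Corollary~\ref{ThBKernelasymptoticsFermi}.
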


\subsection{Generalized Bergman kernel in Fermi coordinates}\label{GenBergmanFermi}

In this paragraph we adapt the proofs of Finski in~\cite[\S 5.2]{Finski24} and give in \eqref{BergmanFermi} the expression of the generalized Bergman kernel in Fermi coordinates in terms of its expression in normal coordinates and of the diffeomorphism between normal and Fermi coordinates. The difference in our case stems from the fact that the horizontal bundle and the normal bundle are odd-dimensional. We start by introducing some notations.

Let $W \subseteq X$ be a compact domain of $X$ with boundary $V = \partial W$. Since $W$ is compact, $V$ is also compact and the injectivity radius $\mathrm{inj}^V$ of $V$ is positive. We have a splitting of the tangent bundle $TX \vert_V = T V \oplus N$ i.e.,
			
	\begin{equation}\label{split}
		T_yX = T_y V \oplus N_y, \quad y \in V,
	\end{equation}
where $N_y$ is the orthogonal complement to $T_y V$ with respect to $g_y^{TX}$, and $N \rightarrow V$ identifies with the normal bundle to $V$.  Let $e_{\kn}(y) \in N_y$ be \emph{the inward pointing unit normal} at $y \in V$. By compactness of $V$ there exists $\eta > 0$ such that for all $y \in V$, the geodesic $ ]-\eta, \eta[ \ni u \mapsto  \exp^X_y(ue_{\kn}(y))$ is well defined. This identifies $V \times ]-\eta, \eta[$ to $\mU_{\eta}$ a tubular neighborhood of $V$. For $s \in ]-\eta/2, \eta/2[$, denote by $V_s$ the hypersurface defined by 

	\begin{equation}
		V_s := \{\exp^X_y(se_{\kn}(y)) \ : \  y \in V \},
	\end{equation}			
in particular for $s > 0, V_s = \{y \in W : d(y, V) = s\}$. As in \cite{BM06}[\S 1.2], we extend the splitting \eqref{split} to $\mU_{\eta}$ by
	\begin{equation}
		T_y X = T_y V_s \oplus N_{s, y}, \quad y \in V_s,
	\end{equation} 
where $N_{s, y}$ denotes the normal bundle to $V_s$, such that we now have

	\begin{equation}
		\begin{gathered}
			TX \vert_{\mU_{\eta}} = TV \oplus N \quad \mathrm{with} \quad TV \rightarrow \mU_{\eta}, \quad N \rightarrow \mU_{\eta},
		\end{gathered}
	\end{equation}
and there exists a family of metrics $g^{TV}$ such that
	\begin{equation}
		g^{TX}(y, x_m) = g^{TV}(y) + dx_m^2, \quad (y, x_m) \in V \times (-\eta, \eta).  
	\end{equation}
Let $P^N : TX \vert_{\mU_{\eta}} \rightarrow N, \ P^{V} : TX \vert_{\mU_{\eta}} \rightarrow TV$ be the orthogonal projections induced by the above splitting and introduce the connections $\nabla^N := P^N \nabla^{TX}$, $\nabla^{TV}$ the Levi-Civita connection for $g^{TV}$ and $\nabla^{sp} := \nabla^{TV} \oplus \nabla^N$ the split connection on $TX \vert_{\mU_{\eta}}$ induced by $\nabla^{TV}$ and $\nabla^N$ and $R^{sp}$ its curvature. The \emph{second fundamental form} of $V$ is defined by 
			
	\begin{equation}
		A := \nabla^{TX} -  \nabla^{sp} \in \Omega^1(\mU_{\eta}, \End(TX \vert_{\mU_{\eta}})).
	\end{equation}
Denote by $B^X(x_0, \vare) \subseteq X$ the normal coordinate ball in $X$ centered at $x_0$ of radius $\vare < \inj^X$. If $x_0 \in V_s$ and $\vare' < \inj^V/2$, let $B^{V_s}(x_0, \vare')$ be the normal coordinate ball in $V_s$ centered at $x_0$ of radius $\vare$ and $\exp^{V_s}_{x_0}$ the corresponding exponential map. Let  $U_{\eta, \vare'}(x_0) := B^{T_{x_0}V_s}(0, \vare') \times ]-\eta-s, \eta-s[$ and define $\psi_{x_0}$ by
			
	\begin{equation}
		\begin{gathered}
			\psi_{x_0}(Y, x_m) :=  \Psi_s(\exp_{x_0}^{V_s} Y, x_m), \quad (Y, x_m) \in U_{\eta, \vare'}(x_0), \quad \mathrm{where}\\
			\Psi_s : (y, x_m) \in V_s \times ]-\eta-s, \eta-s[ \mapsto \exp^X_{y} (x_m e_{\kn}(y)) \in {\mU}_{\eta}.
		\end{gathered}
	\end{equation}
The map $\psi_{x_0}$ defines a diffeomorphism $\psi_{x_0} : U_{\eta, \vare'}(x_0) \rightarrow {\mU}_{\eta, \vare'}(x_0)$, where \linebreak ${\mU}_{\eta, \vare'}(x_0) := \Psi_s(B^{V_s}(x_0, \vare') \times ]-\eta-s, \eta-s[)$ is a portion of the tubular neighborhood ${\mU}_{\eta}$ containing $x_0$. 
Let $h_s :   U_{\eta, \vare'}(x_0) \rightarrow B^{T_{x_0}X}(0, \vare)$ be the map such that 
			
	\begin{equation}\label{psiexph}
		\psi_{x_0}(Y, x_m) = \exp^X_{x_0}(h_s(Y,x_m)) = \varp_{x_0}(h_s(Y, x_m)).
	\end{equation}
Then $h_s$ is a diffeomorphism onto its image. We have the following diagram, where $\iota$ is the inclusion map :
			
	\begin{center}
		\begin{tikzpicture}
  		\matrix (m) [matrix of math nodes,row sep=3em,column sep=4em,minimum width=2em]
  			{
     		B^X(x_0, \vare) & B^{T_{x_0}X}(0, \vare) \\
     		\mU_{\eta, \vare'}(x_0) &  U_{\eta, \vare'}(x_0) \\};
  			\path[-stealth]
    		(m-2-1) edge node [left] {$\iota$} (m-1-1)
            (m-1-2) edge node [below] {$\varp_{x_0}(Z)$} (m-1-1)
    		(m-2-2) edge node [right] {$h_s(Y, x_m)$} (m-1-2)
    		(m-2-2) edge node [below] {$\psi_{x_0}(Y, x_m)$} (m-2-1);
		\end{tikzpicture}
	\end{center}
We picture the situation as follows.
\begin{figure}[H]
	\begin{center}
		\begin{tikzpicture}[>=stealth]

	\fill[gray!20] (-1,2) rectangle (1,-2);
				
	\draw[-] (-3,0) -- (3,0) node [right] {$V= V_0$};
		
	\fill (0, 1) circle (2pt) node [below] {$x_0$};

	\draw[dashed] (-3, 2) -- (3, 2) node [right] {$\eta$};

	\draw[dashed] (-3, -2) -- (3, -2) node [right] {$-\eta$};

	\draw (-3, 1)--(3, 1) node [right] {$V_s$};

	\draw[->] (-3.5, -2.5)--(-3.5, 2.5);

	\draw[thick] (-1, 1)--(1, 1) node [midway, above] {\scriptsize $B^{V_s}(x_0, \vare')$};

	\fill (-3.5, 0) circle (2pt) node [left] {$-s$};

	\fill (-3.5, 2) circle (2pt) node [left] {$\eta -s$};

	\fill (-3.5, -2) circle (2pt) node [left] {$-\eta -s$};

	\fill (-3.5, 1) circle (2pt) node [left] {$0$};

	\fill[pattern=north east lines]
       (-1,0.95) rectangle (1,1.05);

	\node[below] at (0,-1) {$\mU_{\eta, \vare'}(x_0)$};

	\end{tikzpicture}
	\end{center}
	\caption{In grey, the neighborhood $\mU_{\eta, \vare'}(x_0)$.}
\end{figure}

We identify elements of $U_{\eta, \vare'}(x_0)$ and $\mU_{\eta, \vare'}(x_0)$ by the diffeomorphism $\psi_{x_0}$ and for $(Y, x_m) \in U_{\eta, \vare'}(x_0)$ the fibers $E_{h_s(Y, x_m)}, L_{h_s(Y, x_m)}$ and $T_{h_s(Y, x_m)}X$ to $E_{x_0}, L_{x_0}$ and $T_{x_0}X$ by parallel transport along the curve 
	\begin{equation}\label{defdelta}
		\delta(t) = \psi_{x_0}(tY, 0), \quad t \in [0,1], \qquad \delta(t) = \psi_{x_0}(Y, (t-1)x_m), \quad t \in [1,2],
	\end{equation}
for the connections $\nabla^E, \nabla^L, \nabla^{TX}$. The same goes for tensor products, duals and exterior products of these vector bundles with respect to the naturally induced connections. Under this identification the Bergman kernel at $\psi_{x_0}(Y, x_m)$ and  $\psi_{x_0}(Y', x_m')$ becomes $P_{p, x_0}((Y, x_m), (Y', x_m'))$ an element of $\End(L^p \otimes E)_{x_0} \simeq \C \otimes \End(E)_{x_0}$.  This identification can be applied to any function in $\cC^{\infty}(X \times X, F \otimes F^*)$ where $F$ is a Hermitian vector bundle over $X$. 

Let $(F, h^F)$ be a Hermitian vector bundle over $X$ with Hermitian connection $\nabla^F$. Let $x_0 \in V_s$ and $f_i$ a Hermitian basis of $F_{x_0}$. Let $\tilde{f}_i$ be the basis of $F_{Z}$ obtained by parallel transport along the curve $\gamma : t \in [0, 1] \mapsto tZ$ for $\nabla^F$ and $\tilde{f}'_i$ the basis of $F_{h_s(Y, x_m)}$ obtained by parallel transport along $\delta$ for $\nabla^F$.  Let $\theta_s^F:B^{T_{x_0}X}(0, \vare) \rightarrow M_{\rank{F}}(\C)$ be the matrix-valued function defined by
		
	\begin{equation}\label{matricepassage}
		\tilde{f_i}(h_s(Y, x_m)) = \sum_{j=1}^{\rank{F}} (\theta_s^F(h_s(Y, x_m)))_{ij} \tilde{f}'_j(Y, x_m).
	\end{equation}
If $(f_i)^*$ denotes the metric dual of $f_i$, and $(\tilde{f}_i)^*, (\tilde{f}'_i)^*$ are obtained by parallel transport of $(f_i)^*$ for the dual connection $\nabla^{F^*}$ on $F^*$ respectively along $\gamma, \delta$, then $(\tilde{f}_i)^*, (\tilde{f}'_i)^*$ are equal to the metric duals of $\tilde{f}_i$ and $\tilde{f}'_i$. This gives 
	\begin{equation}\label{matricepassagedual}
		(\tilde{f_i})^*(h_s(Y, x_m)) = \sum_{j=1}^{\rank{F}} (\overline{\theta_s^F}(h_s(Y, x_m)))_{ij} (\tilde{f}'_j)^*(Y, x_m),
	\end{equation}
therefore $\theta_s^{F^*}(h_s(Y', x_m'))_{ij} = \overline{\theta_s^{F}}(h_s(Y', x_m'))_{ij}$.
We have the following

\begin{lem}
	
	Let $G \in \cC^{\infty}(X \times X, F \otimes F^*)$, $s \in ]-\eta/2, \eta/2[$. Denote by $G_{x_0}(Z, Z')$ the localized kernel at $x_0 \in V_s$ obtained by parallel transport along $\gamma$ as above, and $G_{x_0}((Y, x_m), (Y', x_m'))$  the localized kernel in Fermi coordinates obtained by parallel transport along $\delta$ as above. For $x_m, x_m' \in ]-\eta-s, \eta-s[$ and $|Y|, |Y'| < \varepsilon$, we have

	\begin{equation}\label{componoyaux}
		G_{x_0}(h_s(Y, x_m), h_s(Y', x_m'))= (\theta_s^F)^T(h_s(Y, x_m))G_{x_0}((Y, x_m), (Y', x_m'))  \overline{\theta_s^F}(h_s(Y', x_m')).
	\end{equation}
\end{lem}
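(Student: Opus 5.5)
The identity \eqref{componoyaux} is a change of frames at each of the two points. I would prove it by expanding $G$ in the two families of frames and comparing coefficients, with no analysis involved.

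Fix $x_0\in V_s$ and the Hermitian basis $(f_i)$ of $F_{x_0}$. Set $x=\varp_{x_0}(h_s(Y,x_m))=\psi_{x_0}(Y,x_m)$ and $x'=\psi_{x_0}(Y',x_m')$. Both localizations are the coefficient matrices of the same section $G(x,x')\in F_x\otimes F^*_{x'}$, taken in two different frames.
\begin{itemize}
\item The normal-coordinate localization $G_{x_0}(Z,Z')$ at $Z=h_s(Y,x_m)$, $Z'=h_s(Y',x_m')$ is defined by
\[
G(x,x')=\sum_{i,k} G_{x_0}(Z,Z')_{ik}\,\tilde f_i(Z)\otimes(\tilde f_k)^*(Z').
\]
\item The Fermi localization is defined by
\[
G(x,x')=\sum_{j,l} G_{x_0}((Y,x_m),(Y',x_m'))_{jl}\,\tilde f'_j\otimes(\tilde f'_l)^*.
\]
\end{itemize}
This uses the identification $F_{x_0}\otimes F_{x_0}^*\simeq \End(F_{x_0})$ through the chosen basis. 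One should fix the index convention (row index attached to $F$, column index to $F^*$) consistently with the way $\theta^F_s$ is defined in \eqref{matricepassage}.

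Next, substitute \eqref{matricepassage} for $\tilde f_i(Z)$ and \eqref{matricepassagedual} for $(\tilde f_k)^*(Z')$ into the first expansion. The fact needed for \eqref{matricepassagedual} is that parallel transport for $\nabla^{F^*}$ of a metric dual equals the metric dual of the transported vector, because $\nabla^F$ is Hermitian. The dual frame therefore changes by the complex conjugate matrix, since the metric dual is conjugate-linear: if $\tilde f_i=\sum_j\theta_{ij}\tilde f'_j$, then $\tilde f_i^*=\sum_j\overline{\theta_{ij}}\,\tilde f'^*_j$. The substitution gives
\[
G(x,x')=\sum_{j,l}\Big(\sum_{i,k}\theta_{ij}\,G_{x_0}(Z,Z')_{ik}\,\overline{\theta'_{kl}}\Big)\tilde f'_j\otimes(\tilde f'_l)^*,
\]
where $\theta=\theta^F_s(h_s(Y,x_m))$ and $\theta'=\theta^F_s(h_s(Y',x'_m))$.

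Comparing with the Fermi expansion yields the Fermi kernel as $\theta^T G_{x_0}(Z,Z')\,\overline{\theta'}$. This is the inverse form of \eqref{componoyaux}: it gives the Fermi kernel in terms of the normal kernel, whereas the statement asks for the normal kernel in terms of the Fermi kernel.

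The main obstacle is exactly this bookkeeping of which frame is expressed in terms of which, together with the transpose and conjugate conventions. To get the stated direction I would expand instead the Fermi frames $\tilde f'_j$ in terms of the normal frames $\tilde f_i$, which involves $\theta^{-1}$. Since both frames are orthonormal (parallel transport is isometric), $\theta$ is unitary. Hence $(\theta^{-1})^T=\overline{\theta}$ and $\overline{\theta^{-1}}=\theta^T$, and the unitarity converts the inverse matrices into the form appearing in \eqref{componoyaux}. If a mismatch of transpose persists, I would conclude that the matrix convention in \eqref{matricepassage} is meant with indices swapped. In that case I would state the convention so that \eqref{componoyaux} holds exactly, since only the unitarity of $\theta^F_s$ and its smoothness are used later.
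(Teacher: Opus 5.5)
Your route is the paper's: expand the single section $G(x,x')$ in the two product frames, substitute \eqref{matricepassage} and \eqref{matricepassagedual}, and compare coefficients. Write $G^{\mathrm N}$ and $G^{\mathrm F}$ for the normal and Fermi localizations. Your outcome $G^{\mathrm F}=\theta^{T}G^{\mathrm N}\,\overline{\theta'}$ is correct, and it is exactly what the display following \eqref{eqabove} in the paper expresses.

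\textbf{Where the proposal fails.} The failing step is your last one. Inverting and using unitarity gives
\[
G^{\mathrm N}=(\theta^{T})^{-1}G^{\mathrm F}(\overline{\theta'})^{-1}=\overline{\theta}\,G^{\mathrm F}\,\theta'^{T}.
\]
Your identities $(\theta^{-1})^{T}=\overline{\theta}$ and $\overline{\theta^{-1}}=\theta^{T}$ are correct, but they land on $\overline{\theta}$ and $\theta'^{T}$, not on the $\theta^{T}$ and $\overline{\theta'}$ of \eqref{componoyaux}. The difference is real. Take rank one and $(Y',x_m')=(0,0)$, so that $\theta'=1$. The true relation is then $G^{\mathrm N}=\overline{\theta}\,G^{\mathrm F}$, whereas \eqref{componoyaux} asserts $G^{\mathrm N}=\theta\,G^{\mathrm F}$. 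Here $\theta$ is the holonomy of the loop formed by $\gamma$ and $\delta$, which for a curved line bundle such as $L$ is in general not $\pm1$. So with \eqref{matricepassage} as written and your reading of the localized kernels, \eqref{componoyaux} is false. Your fallback of transposing $\theta$ cannot rescue it, since a transpose is invisible in rank one.

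\textbf{What does work.} The other option you raise works, provided it is a change of definition. Define $\vartheta$ by $\tilde f'_i=\sum_j\vartheta_{ij}\tilde f_j$, i.e.\ $\vartheta=\theta^{-1}=\theta^{*}$. Substitute this into the Fermi expansion, and its complex conjugate for the metric duals. This gives $G^{\mathrm N}=\vartheta^{T}G^{\mathrm F}\,\overline{\vartheta'}$ at once, with no appeal to unitarity. That is \eqref{componoyaux} with $\vartheta$ in place of $\theta^F_s$.

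The paper reaches the stated orientation through \eqref{Gprimeloc}, which attaches the normal-frame coefficients $c_{ij}$ to the Fermi frame. If both localized kernels are the coefficient matrices of one and the same section $G(x,x')$, that step is not an identity. So the inversion you detected is a convention slip in the statement, not an error in your computation. The right conclusion is to fix which change-of-frame matrix is meant, not to convert one form into the other by unitarity.
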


\begin{proof}

	Let us write 
		\begin{equation}\label{Gloc}
			G_{x_0}(Z, Z') = \sum_{i,j} c_{ij}(Z, Z') \tilde{f}_i(Z) \otimes \tilde{f}_j^*(Z'),
		\end{equation}
	By linearity of the parallel transport maps along $\gamma$ and $\delta$ for $\nabla^F$ we have
			
		\begin{equation}\label{Gprimeloc}
			G_{x_0}((Y, x_m), (Y', x_m')) = \sum_{i,j} c_{ij}(h_s(Y, x_m), h_s(Y', x_m')) \tilde{f}'_i(Y, x_m) \otimes (\tilde{f}'_j)^*(Y', x_m').
		\end{equation}
	From~\eqref{Gloc} we get
				
		\begin{equation}\label{eqabove}
			\begin{split}
				 G_{x_0}(h_s(Y, x_m), & h_s(Y', x_m')) \\
				 & = \sum_{i_1, j_1} c_{i_1j_1}(h_s(Y, x_m), h_s(Y', x_m')))\tilde{f}_{i_1}(h_s(Y, x_m)) \otimes \tilde{f}_{j_1}^*(h_s(Y', x_m')).
			\end{split}
		\end{equation}
	By~\eqref{matricepassage} and~\eqref{matricepassagedual}, the equation~\eqref{eqabove} becomes
				
		\begin{multline} 
			G_{x_0}(h_s(Y, x_m), h_s(Y', x_m')) = \sum_{\substack{i_1, j_1, k_1, k_2}} (\theta_s^F)^T(h_s(Y, x_m))_{k_1i_1} c_{i_1j_1}(h_s(Y, x_m), h_s(Y', x_m')) \\ \overline{\theta_s^F}(h_s(Y', x_m'))_{j_2 k_2} \tilde{f}'_{k_1}(Y, x_m) \otimes (\tilde{f}'_{k_2})^*(Y', x_m').
		\end{multline}
	By~\eqref{Gprimeloc}, this gives~\eqref{componoyaux}.
\end{proof}
By applying the above to $P_{p, x_0}$ for $F = L^p \otimes E$ we get 

	\begin{multline}\label{BergmanFermi}
		P_{p, x_0}((Y, x_m), (Y', x_m'))  \\
		=\left((\theta_s^{L^p \otimes E})^T\right)^{-1}(h_s(Y, x_m))P_{p, x_0}(h_s(Y, x_m), h_s(Y', x_m'))  \left(\overline{\theta_s^{L^p \otimes E}}\right)^{-1}(h_s(Y', x_m')).
	\end{multline}
Here and in the rest of the paper we will write $P_{p, x_0}((Y, x_m), (Y', x_m'))$ for the \textit{Bergman kernel expressed in the coordinates $((Y, x_m), (Y', x_m'))$}.

\subsection{Asymptotics of the diffeomorphism between Fermi and normal coordinates}\label{AsymptoticsDiffeoFermi}

In \cite[\S 2.2]{Finski24}, Finski proves an asymptotic expansion of order 2 for the diffeomorphism $h_s$ in the case of a complex submanifold of a complex manifold $X$. His method is inspired by an article of Gavrilov \cite{G07}.
In what follows we adapt his proof to our case and state the result in Theorem \ref{thasymptotics}. Consider a $k$-tensor $\alpha \in \cC^{\infty}(X, (T^*X)^{\otimes k})$. Recall that the operator $\nabla^{TX}$ acts on $\alpha$  by the formula, 
			
	\begin{equation}\label{tensorconnection}
		(\nabla^{TX} \alpha)(X_1, \ldots, X_{k+1}) = X_1 \alpha(X_2, \ldots, X_{k+1}) - \sum_{i=2}^{k+1} \alpha(X_2, \ldots, \nabla_{X_1}^{TX} X_i, \ldots, X_{k+1}),
	\end{equation}
with $X_1, \ldots X_{k+1} \in \cC^{\infty}(X, TX)$. This defines an operator $(\nabla^{TX})^{\otimes k} := \nabla^{TX} \circ \ldots \circ \nabla^{TX} :  \cC^{\infty}(X) \rightarrow \cC^{\infty}(X, (T^*X)^{\otimes k})$. As in \cite{Finski24}, the following lemma comes from \eqref{tensorconnection}.

	\begin{lem}[{\cite[Lemma 2.6]{Finski24}}, {\cite[\S 2]{G07}}]
		Let $\gamma(t)$ be a geodesic in $X$ and $v := \gamma'(t)$. Then for any $k \in \N$, the following identity holds
				
		\begin{equation}
			v^{\otimes k} \cdot (\nabla^{TX})^{\otimes k} = \left( \frac{\partial}{\partial v} \right)^k.
		\end{equation}
	\end{lem}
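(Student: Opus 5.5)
The identity says that along a geodesic $\gamma$ with $v=\gamma'(t)$, for any smooth function $f$,
\[
\bigl((\nabla^{TX})^{\otimes k} f\bigr)(v,\ldots,v)=\frac{d^k}{dt^k} f(\gamma(t)).
\]
I will prove this by induction on $k$, using formula \eqref{tensorconnection} together with the geodesic equation $\nabla^{TX}_{\gamma'}\gamma'=0$.

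The case $k=0$ is trivial. For $k=1$ we have $(\nabla^{TX}f)(v)=df(v)=\frac{d}{dt}f(\gamma(t))$. For the inductive step, set $\alpha:=(\nabla^{TX})^{\otimes k}f$, a $k$-tensor, and assume $\alpha(\gamma',\ldots,\gamma')=\frac{d^k}{dt^k}f(\gamma(t))$.

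Apply \eqref{tensorconnection} with $X_1=\cdots=X_{k+1}=\gamma'$. To make this legitimate, I extend $\gamma'$ locally to a smooth vector field, or equivalently work with the pullback connection along $\gamma$. The tensorial nature of $\nabla\alpha$ guarantees that the value at $\gamma(t)$ depends only on the vectors, and that $\nabla_{X_1}X_i$ along $\gamma$ depends only on $X_i$ restricted to $\gamma$. This gives
\[
(\nabla^{TX}\alpha)(\gamma',\ldots,\gamma')=\frac{d}{dt}\bigl[\alpha(\gamma',\ldots,\gamma')\bigr]-\sum_{i=2}^{k+1}\alpha(\gamma',\ldots,\nabla^{TX}_{\gamma'}\gamma',\ldots,\gamma').
\]
Every term in the sum vanishes because $\gamma$ is a geodesic. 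The remaining term is $\frac{d}{dt}\frac{d^k}{dt^k}f(\gamma(t))$ by the induction hypothesis, which completes the step.

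The only delicate point is the extension of $\gamma'$ to a vector field, which matters for self-intersecting geodesics. It is handled by the remark above: formula \eqref{tensorconnection} can be read as the Leibniz rule for the induced connection on the pullback bundle $\gamma^*(T^*X)^{\otimes k}$, so no global extension is needed.
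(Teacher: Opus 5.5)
Your proof is correct and is essentially the paper's argument. The paper, citing Finski and Gavrilov, states that the identity follows directly from \eqref{tensorconnection}, which amounts to exactly your induction: the derivative term becomes $\frac{d}{dt}$ along $\gamma$, and every correction term vanishes because $\nabla^{TX}_{\gamma'}\gamma'=0$.
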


	\begin{cor}[{\cite[Corollary 2.7]{Finski24}}, {\cite[\S 2]{G07}}]

		Let $u \in \cC^{\infty}(X)$, then $u$ has the formal Taylor expansion at $x_0 \in X$,

		\begin{equation}\label{Tayloru}
			u(\exp^X_{x_0}(Z)) = \sum_{k=0}^{\infty} \frac{1}{k!} (Z^{\otimes k} \cdot (\nabla^{TX})^k u)(x_0),
		\end{equation}
		where $Z \in T_{x_0} X$.
	\end{cor}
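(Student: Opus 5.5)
The statement to prove is the formal Taylor expansion \eqref{Tayloru}. Fix $x_0 \in X$ and $Z \in T_{x_0}X$, and consider the geodesic $\gamma(t) := \exp^X_{x_0}(tZ)$. Set $f(t) := u(\gamma(t))$, which is smooth for $t$ in a neighbourhood of $[0,1]$ as long as $\gamma$ is defined (completeness of $g^{TX}$ ensures this). The plan is to take the ordinary one-variable Taylor series of $f$ at $t=0$,
\begin{equation*}
f(1) \sim \sum_{k=0}^{\infty}\frac{1}{k!} f^{(k)}(0),
\end{equation*}
and to identify each derivative $f^{(k)}(0)$ with $(Z^{\otimes k}\cdot(\nabla^{TX})^k u)(x_0)$.

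The identification comes from the preceding Lemma ({\cite[Lemma 2.6]{Finski24}}), applied along $\gamma$ with $v = \gamma'(t)$. It gives
\begin{equation*}
f^{(k)}(t) = \Big(\frac{\partial}{\partial v}\Big)^k u\,(\gamma(t)) = \big(\gamma'(t)^{\otimes k}\cdot(\nabla^{TX})^k u\big)(\gamma(t)),
\end{equation*}
and at $t=0$ we have $\gamma'(0)=Z$.

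If one prefers to check the Lemma directly, the argument is an induction on $k$ using \eqref{tensorconnection}. Write $\alpha_k := (\nabla^{TX})^k u$ and suppose $\frac{d^k}{dt^k} f = \alpha_k(\gamma',\dots,\gamma')$. Differentiating in $t$ and applying \eqref{tensorconnection} with all arguments equal to $\gamma'$ gives
\begin{equation*}
\frac{d}{dt}\,\alpha_k(\gamma',\dots,\gamma') = (\nabla_{\gamma'}\alpha_k)(\gamma',\dots,\gamma') + \sum_i \alpha_k(\gamma',\dots,\nabla_{\gamma'}\gamma',\dots,\gamma').
\end{equation*}
The geodesic equation $\nabla_{\gamma'}\gamma' = 0$ kills every term of the sum. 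What remains is $\alpha_{k+1}(\gamma',\dots,\gamma')$, which completes the induction.

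The expansion is formal, meaning that it holds as an asymptotic statement. For $|Z|<\vare$, Taylor's formula with remainder for $f$ gives
\begin{equation*}
u(\exp^X_{x_0}Z) - \sum_{k\le N}\frac{1}{k!}(Z^{\otimes k}\cdot(\nabla^{TX})^k u)(x_0) = \mO(|Z|^{N+1}),
\end{equation*}
because the remainder is $\frac{1}{(N+1)!}f^{(N+1)}(\theta)$ for some $\theta\in[0,1]$, and this equals $\alpha_{N+1}(\gamma'(\theta),\dots,\gamma'(\theta))$ with $|\gamma'(\theta)|=|Z|$. The constant in $\mO(|Z|^{N+1})$ is uniform in $x_0$ whenever $u$ has bounded covariant derivatives, which is the bounded geometry setting of the paper.

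The only step needing care is justifying that $\frac{d}{dt}$ of a contracted tensor equals the covariant derivative along the curve, via \eqref{tensorconnection}, when the vector fields are defined only along $\gamma$. This is standard: extend $\gamma'$ locally to a vector field, or use the pullback connection along $\gamma$. I do not expect it to be a real obstacle.
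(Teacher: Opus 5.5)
Your proposal is correct and follows the paper's route. The paper gives no separate argument: it deduces the expansion from the preceding lemma (Finski's Lemma 2.6), which amounts to Taylor-expanding $t\mapsto u(\exp^X_{x_0}(tZ))$ at $t=0$ and identifying the derivatives along the geodesic, exactly as you do. Your inductive check of that lemma via \eqref{tensorconnection} and $\nabla_{\gamma'}\gamma'=0$, and your remainder bound using $|\gamma'(\theta)|=|Z|$, are both correct additions.
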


From~\eqref{psiexph} and~\eqref{Tayloru}, for $x_0 \in V_s$ we have the expansion 
			
	\begin{equation}
		u(\psi_{x_0}(Y, x_m)) = \sum_{k=0}^{\infty} \frac{1}{k!} \left(h_s(Y, x_m)^{\otimes k} \cdot (\nabla^{TX})^k u \right)(x_0).
	\end{equation}
Let $y = \exp^{V_s}_{x_0}(Y)$, $|Y| < \vare'$. By Taylor expansion in $x_m$ around $x_m =0$ we have
			
	\begin{equation}\label{Taylorx_m}
		 u(\psi_{x_0}(Y, x_m)) = u \left(\exp^X_{y} \left(x_m e_{\kn}(y) \right) \right) = \displaystyle \sum_{k=0}^{\infty} \frac{1}{k!} \left( (x_m e_{\kn}(y))^{\otimes k} \cdot (\nabla^{TX})^k u \right)(\psi(Y, 0)).
	\end{equation}
Again as in \cite{Finski24}, expanding the terms of the sum in~\eqref{Taylorx_m} along the $Y$ component we get 
			
\begin{multline}\label{preTaylor2}
	\left( (x_m e_{\kn}(y))^{\otimes k}  \cdot (\nabla^{TX})^k u \right) (\psi(Y, 0))  \\
	= \sum_{l=0}^{\infty} \frac{1}{l!} Y^{\otimes l} \cdot (\nabla^{TV})^{\otimes l} \left( (x_m e_{\kn}(y))^{\otimes k} \cdot (\nabla^{TX})^{\otimes k} u \right)(x_0).
\end{multline}
The following lemma is a weaker version of \cite[Lemma 2.8]{Finski24}.
				
\begin{lem} 
	For $s \in ]-\eta/2, \eta/2[$, $x_0 \in V_s$, let $Y \in B^{T_{x_0}V_s}(0, \vare')$, let $y(t)$ be the geodesic $y(t) = \exp_{x_0}^{V_s}(tY)$, for $|t| <1$, and write $Y(t) = y'(t)$. Let $x_m \in ]-\eta-s, \eta-s[$. We have
	\begin{multline}\label{comuTaylor}
		Y^{\otimes l} \cdot (\nabla^{TV})^{\otimes l} \left( (x_m e_{\kn}(y))^{\otimes k} \cdot (\nabla^{TX})^{\otimes k} u \right)(x_0) \\
		 = \left( (Y^{\otimes l} \otimes (x_m e_{\kn}(x_0))^{\otimes k} ) \cdot (\nabla^{sp})^{\otimes l} (\nabla^{TX})^{\otimes k} u \right)(x_0).
	\end{multline}
\end{lem}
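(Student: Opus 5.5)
The idea is to reduce both sides to iterated derivatives along the $V_s$-geodesic $y(t)$, and then move the derivatives onto the tensor $(\nabla^{TX})^{\otimes k}u$ using that $Y(t)$ and $e_{\kn}$ are $\nabla^{sp}$-parallel along $y(t)$. Set $T := (\nabla^{TX})^{\otimes k}u \in \cC^{\infty}(X,(T^*X)^{\otimes k})$ and define on $V_s$ the function
\begin{equation*}
F(y) := \big((x_m e_{\kn}(y))^{\otimes k}\cdot T\big)(y) = x_m^k\, T(e_{\kn},\ldots,e_{\kn})(y),
\end{equation*}
where $x_m$ is a fixed constant. Apply the Gavrilov--Finski lemma (the analogue of \cite[Lemma 2.6]{Finski24} stated above) on the Riemannian manifold $(V_s, g^{TV})$ with its Levi-Civita connection $\nabla^{TV}$ and the geodesic $y(t)=\exp^{V_s}_{x_0}(tY)$. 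This converts the left-hand side of \eqref{comuTaylor} into $\frac{d^l}{dt^l}\big|_{t=0} F(y(t))$. For this I will check that the Levi-Civita connection of $V_s$ for the induced metric is indeed the restriction of $\nabla^{TV}$. This holds because $g^{TX} = g^{TV}+dx_m^2$, so $V_s$ carries the metric $g^{TV}(\cdot)|_{V_s}$.

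Next I need two parallelism facts along $y(t)$ for the split connection $\nabla^{sp}=\nabla^{TV}\oplus\nabla^N$. First, $\nabla^{sp}_{Y(t)}Y(t)=\nabla^{TV}_{Y(t)}Y(t)=0$, since $y(t)$ is a geodesic of $V_s$ and $Y(t)\in TV$. Second, $\nabla^{sp}_{v}e_{\kn}=\nabla^N_v e_{\kn}=P^N\nabla^{TX}_v e_{\kn}=0$ for every $v\in TV_s$. This second fact holds because $N$ has rank one and is spanned by $e_{\kn}$, and $|e_{\kn}|=1$ gives $g^{TX}(\nabla^{TX}_v e_{\kn},e_{\kn})=\tfrac12 v|e_{\kn}|^2=0$. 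Since $\nabla^{sp}$ is a metric connection extended to tensors by the Leibniz rule, it commutes with contractions.

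Differentiating the contraction $T(e_{\kn},\ldots,e_{\kn})(y(t))$ along the curve, all terms hitting $e_{\kn}$ vanish, so I get $\frac{d}{dt}F(y(t)) = x_m^k(\nabla^{sp}_{Y(t)}T)(e_{\kn},\ldots,e_{\kn})$. Iterating, and using $\nabla^{sp}_{Y(t)}Y(t)=0$ so that no terms appear from differentiating the $Y(t)$ slots, I obtain by induction on $l$
\begin{equation*}
\frac{d^l}{dt^l}F(y(t)) = \big((Y(t)^{\otimes l}\otimes (x_me_{\kn})^{\otimes k})\cdot(\nabla^{sp})^{\otimes l}T\big)(y(t)).
\end{equation*}
This is exactly the computation behind \eqref{tensorconnection}, with $\nabla^{sp}$ in place of $\nabla^{TX}$. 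Evaluating at $t=0$ gives \eqref{comuTaylor}.

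The main obstacle is the bookkeeping in this induction, namely justifying that $Y(t)^{\otimes l}\cdot(\nabla^{sp})^{\otimes l}T$ equals the $l$-th iterated covariant derivative along the curve. Only derivatives in the $TV_s$ directions occur, so the relevant parallelism facts hold, but the identification must be done with the Leibniz rule applied to the contraction with $Y(t)^{\otimes l}\otimes e_{\kn}^{\otimes k}$. A second point to verify is that $e_{\kn}$, extended to $\mU_\eta$ as the unit normal to the $V_s$ (the velocity of the normal geodesics), is smooth and unit length along $V_s$, since that is what the computation uses.
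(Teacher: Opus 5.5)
Your proposal is correct and follows essentially the paper's argument. The paper only cites Finski's Lemma~2.8 and states that \eqref{comuTaylor} follows from \eqref{tensorconnection} together with the parallelism relations $\nabla^{TV}_Y Y=0$ and $\nabla^{sp}_Y e_{\kn}=0$; your reduction of the left side to $\frac{d^l}{dt^l}\big|_{t=0}F(y(t))$ via the Gavrilov--Finski lemma on $V_s$, followed by the Leibniz-rule induction along $y(t)$, is exactly the computation left implicit there. (Minor remarks: compatibility of $\nabla^{sp}$ with contractions does not require it to be metric, and the relation $\nabla^{TX}_{e_{\kn}}e_{\kn}=0$ listed in the paper is not needed for this lemma, which your proof correctly does not use.)
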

\begin{proof}
By the geodesic equation, and by construction of $\nabla^N$ and $\nabla^{sp}$, we have 

\begin{equation}\label{identitesconnexions}
	\nabla^{TV}_{Y}Y = 0, \quad \nabla_{Y}^{N} e_{\kn} = 0, \quad \nabla_{Y}^{sp} e_{\kn} = 0, \quad \textrm{and} \quad \nabla^{TX}_{e_{\kn}} e_{\kn}= 0.
\end{equation} 
As in \cite[Lemma 2.8]{Finski24}, \eqref{comuTaylor} is a consequence of \eqref{tensorconnection} and \eqref{identitesconnexions}.
\end{proof}
From \eqref{preTaylor2} and the above lemma, we get  another asymptotic formula for $u(\psi_{x_0}(Y, x_m))$ 
				
\begin{equation}\label{Tayloru2}
	\begin{split}
		u(\psi_{x_0}(Y, &x_m))		\\
		 &=\sum_{k=0}^{\infty} \sum_{l=0}^{\infty} \frac{1}{k!l!} \left( (Y^{\otimes l} \otimes (x_m e_{\kn}(x_0))^{\otimes k} ) \cdot (\nabla^{sp})^{\otimes l} (\nabla^{TX})^{\otimes k} u \right)(x_0).
	\end{split}
\end{equation}
We can decompose $h_s$ as
				
\begin{equation}\label{polydech}
	h_s(Y, x_m) = \sum_{r=1}^{\infty} h_s^{[r]}(Y, x_m),
\end{equation} 
where $h_s^{[r]}(Y, x_m)$ is a homogenous polynomial corresponding to the term of degree $r$ in $Y, x_m$ in the asymptotic expansion of $h_s$. By identifying the terms of~\eqref{Tayloru} and~\eqref{Tayloru2}, in the same way as~\cite{Finski24} we obtain the following result.

\begin{thm}\label{thasymptotics}
	The first two terms in the expansion of $h$ verify
				
		\begin{equation}\label{h1}
			h_s^{[1]}(Y, x_m) = Y + x_m e_{\kn}(x_0).
		\end{equation}
				
		\begin{equation}\label{h2}
			h_s^{[2]}(Y, x_m) = \frac{1}{2} A_{x_0}(Y)Y + x_mA_{x_0}(Y)(e_{\kn}(x_0)).
		\end{equation}
\end{thm}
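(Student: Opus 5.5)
The plan is to identify, order by order in $(Y, x_m)$, the two Taylor expansions of $u \circ \psi_{x_0}$ for an arbitrary $u \in \cC^\infty(X)$. On one side we have \eqref{Tayloru} composed with $h_s = \sum_r h_s^{[r]}$. On the other side we have the mixed expansion \eqref{Tayloru2}. Since $u$ is arbitrary, we may take $u$ with prescribed first jet at $x_0$, namely $du(x_0)$ arbitrary. Comparing the terms of degree $r$ will then determine $h_s^{[r]}$ from $h_s^{[1]}, \dots, h_s^{[r-1]}$ and the connection data.

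\textbf{Degree one.} Inserting \eqref{polydech} into \eqref{Tayloru}, the degree-one part is $\langle h_s^{[1]}(Y,x_m), du(x_0)\rangle$. In \eqref{Tayloru2} the degree-one part comes from $(k,l) = (0,1)$ and $(1,0)$. Since $\nabla^{sp}$ and $\nabla^{TX}$ coincide with $d$ on functions, it equals $Y\cdot du(x_0) + x_m e_{\kn}(x_0)\cdot du(x_0)$. As $du(x_0)$ is arbitrary, we get \eqref{h1}.

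\textbf{Degree two.} From \eqref{Tayloru} the degree-two part is
\[
\langle h_s^{[2]}, du\rangle(x_0) + \tfrac12 (h_s^{[1]})^{\otimes 2}\cdot (\nabla^{TX})^2 u(x_0).
\]
From \eqref{Tayloru2} it is the sum of three pieces: $\tfrac12 Y^{\otimes 2}\cdot(\nabla^{sp}\nabla^{sp}u)$ for $(k,l) = (0,2)$, $x_m (Y\otimes e_{\kn})\cdot \nabla^{sp}\nabla^{TX} u$ for $(1,1)$, and $\tfrac12 x_m^2 e_{\kn}^{\otimes 2}\cdot(\nabla^{TX})^2u$ for $(2,0)$.

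We expand $(h_s^{[1]})^{\otimes 2}$ using \eqref{h1}. The pure $x_m^2$ terms cancel. The key tool is formula \eqref{tensorconnection} applied to one-forms: for $\beta = du$,
\[
(\nabla^{sp}\beta)(Y,Z) - (\nabla^{TX}\beta)(Y,Z) = \beta\big((\nabla^{TX}_Y - \nabla^{sp}_Y)Z\big) = \beta(A(Y)Z).
\]
Applying this with $Z = Y$ gives the difference $\tfrac12 du(A(Y)Y)$. Applying it with $Z = e_{\kn}$ handles the cross term. Here we also need the symmetry of the Hessian $(\nabla^{TX})^2 u$, valid because the Levi-Civita connection is torsion-free. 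This symmetry lets the two cross terms $x_m(Y\otimes e_{\kn}+e_{\kn}\otimes Y)/2$ from the expansion of $(h^{[1]})^{\otimes 2}$ combine into $x_m (Y\otimes e_{\kn})\cdot (\nabla^{TX})^2 u$. The difference with the $(1,1)$ term is then $x_m\, du(A(Y)e_{\kn})$. Collecting terms, we get $\langle h_s^{[2]}, du\rangle = du\big(\tfrac12 A(Y)Y + x_m A(Y)e_{\kn}\big)$, which is \eqref{h2}.

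\textbf{Main obstacle.} The main obstacle is bookkeeping the order of the covariant derivatives in the $(1,1)$ term, where $\nabla^{sp}$ acts after $\nabla^{TX}$ as in \eqref{comuTaylor}. One must check that the first slot pairs with $Y$ and the second with $e_{\kn}$, so that the correction is exactly $A(Y)e_{\kn}$ and not $A(e_{\kn})Y$. I would fix conventions via \eqref{tensorconnection} carefully there, and use $\nabla^{TX}_{e_{\kn}}e_{\kn}=0$ from \eqref{identitesconnexions} to confirm the absence of an $x_m^2$ correction.
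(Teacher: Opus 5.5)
Your proposal is correct and follows essentially the same route as the paper: you compare the degree-one and degree-two parts of \eqref{Tayloru} and \eqref{Tayloru2}, use \eqref{tensorconnection} to get $\nabla^{sp}du(Y_1,Y_2)-\nabla^{TX}du(Y_1,Y_2)=du(A(Y_1)Y_2)$, and conclude \eqref{h2} since $du(x_0)$ is arbitrary. Your argument is also more explicit than the paper's, which leaves implicit two points: the symmetry of the Levi-Civita Hessian used to merge the cross terms, and the slot ordering that produces $A(Y)e_{\kn}$ rather than $A(e_{\kn})Y$ (the paper states its identity only for $Y_1,Y_2\in TV$, yet applies it with $Y_2=e_{\kn}$).
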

		
\begin{proof}
	The proof for $h_s^{[1]}(Y, x_m)$ is straightforward. For $h_s^{[2]}(Y, x_m)$ we proceed as follows. Combining~\eqref{Tayloru} and~\eqref{Tayloru2} we get 

	\begin{multline}\label{interm}
		\left[\left(h_s^{[2]} (Y, x_m) \cdot \nabla^{TX} + \frac{1}{2}h_s^{[1]}(Y, x_m)^{\otimes 2} \cdot (\nabla^{TX})^{\otimes 2} \right)u\right](x_0) = \left[\left(\frac{1}{2} Y^{\otimes 2} \cdot (\nabla^{sp})^{\otimes 2} \right. \right. \\ 
		 \left. \left. + Y \otimes (x_m e_{\kn}(y)) \cdot \nabla^{sp} \otimes \nabla^{TX} +\frac{1}{2} (x_m e_{\kn}(y))^{\otimes 2} \cdot (\nabla^{TX})^{\otimes 2} \right) u\right](x_0).
	\end{multline}
By~\eqref{tensorconnection}, we have for $Y_1, Y_2 \in TV$,
		 
	\begin{equation}
		\begin{split}
			Y_1 \otimes Y_2 \cdot \nabla^{sp} \otimes \nabla^{TX} u &= Y_1(\nabla^{TX}_{Y_2} u) - \nabla^{TX}_{\nabla^{sp}_{Y_1} Y_2} u \\
		 	&= Y_1(Y_2u) - \nabla^{sp}_{Y_1} Y_2 u \\
		 	&= Y_1 \otimes Y_2 \cdot (\nabla^{TX})^{\otimes 2} u + A(Y_1)Y_2 u.
		\end{split}
	\end{equation}
The same computation gives
		 
	\begin{equation}
		Y_1 \otimes Y_2 \cdot (\nabla^{sp})^{\otimes 2} u = Y_1 \otimes Y_2 \cdot (\nabla^{TX})^{\otimes 2} u + A(Y_1)Y_2 u.
	\end{equation}
We obtain the equality~\eqref{h2} by~\eqref{h1} and~\eqref{interm}  .
\end{proof}

\subsection{Generalized Bergman kernel asymptotics in Fermi coordinates}\label{AsymptoticsGenBergmanFermi}
In this paragraph we combine the asymptotics of Theorems~\ref{ThBkernelasymptotics},~\ref{thasymptotics} to obtain the off-diagonal expansion of the \linebreak Bergman kernel in Fermi coordinates stated in Corollary~\ref{ThBKernelasymptoticsFermi}.

Recall that the map $\mJ_{x_0}$ was defined in \eqref{calJ0} by $\mJ_{x_0} = -2\pi \sqrt{-1} J_0(x_0)$, with $J_0$ the correspondence between the symplectic form and the Riemannian metric as in \eqref{J0}. In the rest of the paper, we will decompose $\mJ_{x_0}$ for $x_0 \in V_s$ as follows. Let $J_{x_0}$ denote the endomorphism induced by the complex structure $J$ acting on $T_{x_0}X$ and $H_{x_0}V_s := T_{x_0}V_s  \cap J_{x_0}T_{x_0}V_s$. Then $T_{x_0}X$ decomposes as the orthogonal direct sum
				 
\begin{equation}\label{splittangent}
	T_{x_0}X = H_{x_0}V_s \oplus \mathrm{Span}_{\R}(J_{x_0}e_{\kn}(x_0), e_{\kn}(x_0)),
\end{equation}
where $J_{x_0}e_{\kn}(x_0), e_{\kn}(x_0)$ are orthogonal and unitary. The space $H_{x_0}V$ is stable by $J$ hence by $J_0$, and we can write $J_0$ matrixwise as 
				
	\begin{equation}\label{splitJ0}					
		(J_0)_{x_0} = 	
		\begin{pmatrix}
			J_0 \vert_{H_{x_0}V_s} & 0 		& 0 	  \\
			0 			   & 0 		& -\alpha_{x_0}\\
			0			   & \alpha_{x_0} & 0
		\end{pmatrix},
	\end{equation}
with $\alpha_{x_0} := \omega(e_{\kn}, Je_{\kn})(x_0) > 0$, and we have 
					
		\begin{equation}\label{diagcalJ}					
			\mJ_{x_0} = 	
			\begin{pmatrix}
				\mJ_{x_0} \vert_{H_{x_0}V_s} & 0 & 0 			\\
				0 			   		 & 0 & 2i\pi \alpha_{x_0} \\
				0			   & -2i\pi \alpha_{x_0} & 0
			\end{pmatrix}, \ (\mJ_{x_0}^2)^{1/2} = 
			\begin{pmatrix}
				(\mJ_{x_0}^2)^{1/2} \vert_{H_{x_0}V_s} & 0 			 & 0			\\
				0 							   & 2\pi \alpha_{x_0} & 0 			\\
				0 							   & 0 			 & 2\pi \alpha_{x_0}
			\end{pmatrix}.
		\end{equation}
For $Y \in T_{x_0}V$ and $x_m \in \R$  we define 
\begin{equation}
\begin{gathered}
	z_m := x_{m-1} + ix_m \in \C, \quad \mathrm{where} \quad Y = Y_H + x_{m-1}Je_{\kn}(x_0),\\
	\mathrm{with} \quad  x_{m-1} \in \R, Y_H \in H_{x_0}V_s \quad  \mathrm{and} \quad |Y|^2 = |Y_H|^2 + x_{m-1}^2.
\end{gathered} 
\end{equation}
Let us introduce $\mP^H_{x_0}$ the horizontal kernel, defined for $Y_H, Y_H' \in H_{x_0}V$ by

		\begin{multline}\label{horizontalBkernel}
			\mP^H_{x_0}(Y_H, Y_H') := \frac{\det_{\C} \mJ_{x_0}\vert_{HV}}{(2\pi)^{n-1}} \exp \bigg( -\frac{1}{4} \scal{(\mJ_{x_0}\vert_{HV}^2)^{1/2}(Y_H-Y_H')}{(Y_H-Y_H')} \\
			+ \frac{1}{2}\scal{\mJ_{x_0}\vert_{HV} Y_H}{Y_H'}\bigg),
		\end{multline}
such that $\mP_{x_0}$ defined in \eqref{calP} verifies, by \eqref{diagcalJ},

		\begin{equation}\label{calPsplit}
			\mP_{x_0}\left(Y + x_m e_{\kn}(x_0), Y' + x_m' e_{\kn}(x_0)\right) = \mP^H_{x_0}(Y_H, Y_H')P_{\alpha_{x_0}}(z_m, z_m').
		\end{equation}
We recall that in the above $P_{\alpha_{x_0}}$ corresponds to the Bergman kernel on the complex plane as in Section \ref{SectionBargmann}. 

Let $dv_{TV}$ be the volume form on $(T_{x_0}V_s, g^{T_{x_0}V_s})$ and $\kappa^V_{x_0}$ the smooth positive function such that if $dv_{x_0}'$ denotes the form on $T_{x_0}X$ in Fermi coordinates obtained by the identification along the path~\eqref{defdelta}, we  have for $(Y, x_m) \in U_{\eta, \vare}(x_0)$
			
\begin{equation}
	dv_{x_0}'(Y, x_m) = \kappa_{V, x_0}(Y, x_m) dv_{TV}(Y)dx_m, \quad \mathrm{with} \quad \kappa_{V, x_0}(0, 0) = 1.
\end{equation}
As a Corollary of Theorem~\ref{ThBkernelasymptotics}, we get the following off-diagonal expansion for the Bergman kernel in Fermi coordinates. For $Z = (Y, x_m)$ and $\alpha = (\beta, j)$, we write $\frac{\partial^{|\alpha|}}{\partial Z^{\alpha}}$ for $\frac{\partial^{|\beta|}}{\partial Y^{\beta}} \frac{\partial^j}{\partial x_m^j}$.

\begin{cor}\label{ThBKernelasymptoticsFermi}
	Let $x_0 \in U_{\eta}$ and $s \in ]-\eta/2, \eta/2[$ such that $x_0 \in V_s$.
	There exist $\mathrm{End}(E)_{x_0}$-valued polynomials $J_{r, x_0}'$ in $Y, x_m, Y'$ and $x_m'$ such that for $|Y|, |Y'| < \vare$, $x_m, x_m' \in ]-\eta-s, \eta-s[$, we have the expansion
		\begin{multline}\label{BKernelasymptoticsFermi}
			\underset{|\alpha| + |\alpha'| \leq l}{\sup} \bigg| \frac{\partial^{|\alpha| + |\alpha'|}}{\partial Z^{\alpha} \partial Z'^{\alpha'}} \bigg(\frac{1}{p^n} P_{p, x_0}((Y, x_m), (Y', x_m')) \\
			-\sum_{r = 0}^k p^{-r/2}\mF'_{r, x_0}(p^{1/2}Y, p^{1/2}x_m, p^{1/2}Y', p^{1/2}x_m') 
			\kappa^{-1/2}_{V, x_0}(Y, x_m) \kappa^{-1/2}_{V, x_0}(Y', x_m')\bigg)\bigg|_{\cC^{l'}(X)} \\
			\leq Cp^{\frac{k-l+1}{2}}(1 + \sqrt{p}|(Y, x_m)| + \sqrt{p}|(Y', x_m')|)^M  \\
			\times \exp(-c \sqrt{\mu_0p}|(Y-Y', x_m-x_m')|)+ \mO(e^{-c_0 \sqrt{p}}),
		\end{multline}
with 

		\begin{equation}
			\mF'_{r, x_0}(Y, x_m, Y', x_m') := J_{r, x_0}'(Y, x_m, Y', x_m') \\
			\mP^H_{x_0}(Y_H, Y_H')P_{\alpha_{x_0}}(z_m, z_m'),
		\end{equation}
where $z_m = x_{m-1} + i x_m$, $Y = Y^H + x_{m-1}Je_{\kn}(x_0)$, $Y^H \in H_{x_0}V$ and the first term equals
			
		\begin{equation}\label{firsttermfermi}
			J_{0, x_0}' = \Id_{E_{x_0}}.
		\end{equation}
As in Theorem~\ref{ThBkernelasymptotics}, the above expansion holds uniformly  in $\cC^{l'}$-norm taken on $x \in \mU_{\eta}$ for $l' \in \N$.
\end{cor}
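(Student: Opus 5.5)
The plan is to substitute the Fermi–normal change of coordinates into the off-diagonal expansion of Theorem~\ref{ThBkernelasymptotics} and re-expand in powers of $p^{-1/2}$. By \eqref{BergmanFermi},
\[
P_{p,x_0}((Y,x_m),(Y',x_m')) = \big((\theta_s^{L^p\otimes E})^T\big)^{-1}(h_s(Y,x_m))\,P_{p,x_0}\big(h_s(Y,x_m),h_s(Y',x_m')\big)\,\big(\overline{\theta_s^{L^p\otimes E}}\big)^{-1}(h_s(Y',x_m')).
\]
The middle factor is expanded using Theorem~\ref{ThBkernelasymptotics} at the points $Z=h_s(Y,x_m)$ and $Z'=h_s(Y',x_m')$. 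Theorem~\ref{thasymptotics} gives
\[
h_s(Y,x_m)=Y+x_me_{\kn}(x_0)+\mO(|(Y,x_m)|^2),
\]
so after rescaling, $\sqrt p\,h_s(Y/\sqrt p,x_m/\sqrt p)=Y+x_me_{\kn}(x_0)+\sum_{r\ge1}p^{-r/2}h_s^{[r+1]}(Y,x_m)$. Taylor expanding $\mF_{r,x_0}$ around the linear part then produces polynomial corrections multiplying $\mP_{x_0}(Y+x_me_{\kn},Y'+x_m'e_{\kn})$, and \eqref{calPsplit} factors this as $\mP^H_{x_0}(Y_H,Y'_H)P_{\alpha_{x_0}}(z_m,z_m')$. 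The volume factors are handled the same way: the ratio $\kappa_{x_0}(h_s)/\kappa_{V,x_0}$ times the Jacobian of $h_s$ is smooth, equals $1$ at $0$, and is Taylor expanded.

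The two transition matrices $\theta_s^{L^p\otimes E}=(\theta_s^L)^p\theta_s^E$ need care, since $\theta_s^L$ appears to the power $p$. Because $L$ is a line bundle, $\theta_s^L(h_s(Y,x_m))=\exp(i\Phi(Y,x_m))$, where $\Phi$ is the holonomy of $\nabla^L$ around the loop formed by the radial segment and the Fermi path $\delta$. This equals $-2\pi i$ times the $\omega$-area of a small triangle-like region with vertices $0$, $Y$, $Y+x_me_{\kn}$. That region is thin: the paths differ at second order by Theorem~\ref{thasymptotics}, so $\Phi=\mO(|(Y,x_m)|^3)$. Consequently $p\Phi(Y/\sqrt p,x_m/\sqrt p)=p^{-1/2}\Phi^{[3]}(Y,x_m)+\dots$ is a power series in $p^{-1/2}$ with polynomial coefficients, and its exponential expands to polynomials. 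One must check that the quadratic part vanishes, i.e.\ that the triangle has zero area at second order. I expect this to follow from the fact that both curves start tangentially to the same straight segment up to the cubic term, but this is the point I would verify carefully; it is the main obstacle. If a quadratic phase survived, it would have to be absorbed into a modified Gaussian instead.

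The factor $\theta_s^E$ is smooth, equals $\Id$ at $0$, and is expanded directly. Collecting everything yields polynomials $J'_{r,x_0}$ with $J'_{0,x_0}=\Id_{E_{x_0}}$, since all corrections carry at least one power $p^{-1/2}$.

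For the remainder, I truncate each Taylor series at order $k$. The Taylor remainders are bounded by $p^{-(k+1)/2}(1+\sqrt p|Z|+\sqrt p|Z'|)^M$ times the Gaussian bound. Since $|h_s(Y,x_m)-h_s(Y',x_m')|\ge c|(Y-Y',x_m-x_m')|$ for $\vare$ small, the decay $\exp(-\sqrt{\mu_0p}|Z-Z'|)$ of Theorem~\ref{ThBkernelasymptotics} transfers, up to the constant $c$, to the Fermi variables. Derivatives in $(Y,x_m)$ are handled by the chain rule, each costing at most $\sqrt p$. Uniformity in $x_0$ in $\cC^{l'}$ follows from the bounded geometry and the compactness of $V$, which make all Taylor coefficients of $h_s$, $\kappa$ and $\theta$ uniformly bounded with their derivatives in $x_0\in\mU_\eta$.
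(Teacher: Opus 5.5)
Your route is the paper's. You insert \eqref{BergmanFermi}, expand $P_{p,x_0}(h_s(\cdot),h_s(\cdot))$ by Theorem~\ref{ThBkernelasymptotics}, peel off the cubic remainder of the Gaussian exponent, factor with \eqref{calPsplit}, and Taylor expand the volume and frame factors. You also correctly single out the one delicate point, $\theta^{L^p}_s=(\theta^L_s)^p$. The paper disposes of it by simply asserting $\theta^{L^p\otimes E}_s=\Id+p^{-1/2}\mO(|p^{1/2}Y,p^{1/2}x_m|)$.

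\textbf{The gap.} Your expectation that the quadratic part of $\Phi$ vanishes is false, and the argument breaks exactly there. The triangle you name, with vertices $0$, $Y$, $Y+x_me_{\kn}$, is not thin. Theorem~\ref{thasymptotics} controls only the \emph{endpoint} $h_s(Y,x_m)$. The radial geodesic runs straight to $\approx Y+x_me_{\kn}$, whereas $\delta$ turns a right angle at $\approx Y$, so the two paths already differ at first order. Concretely, in the radial gauge $\nabla^L=d-\pi\sqrt{-1}\,\omega_{x_0}(Z,\cdot)+\mO(|Z|^2)$:
\begin{itemize}
\item along $t\mapsto h_s(tY,0)$ the connection form contributes only $\mO(|Y|^3)$;
\item along $u\mapsto h_s(Y,u)$ it equals $-\pi\sqrt{-1}\,\omega_{x_0}(Y,e_{\kn})+\mO(|(Y,u)|^2)$;
\item in the splitting \eqref{splittangent}, $\omega_{x_0}(Y,e_{\kn})=x_{m-1}\,\omega_{x_0}(Je_{\kn},e_{\kn})=-\alpha_{x_0}x_{m-1}$.
\end{itemize}
Hence
\begin{equation*}
\theta^L_s(h_s(Y,x_m))=\exp\big(\sqrt{-1}\,\pi\alpha_{x_0}x_{m-1}x_m+\mO(|(Y,x_m)|^3)\big),\qquad (\theta^L_s)^p=e^{\sqrt{-1}\,\pi\alpha_{x_0}\tilde x_{m-1}\tilde x_m}\big(1+\mO(p^{-1/2})\big),
\end{equation*}
with $\tilde x=\sqrt{p}\,x$. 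This is an order-one phase, not $1+\mO(p^{-1/2})$.

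\textbf{Consequence.} In the $\delta$-frame, the leading term of $p^{-n}P_{p,x_0}((Y,x_m),(Y',x'_m))$ is
\begin{equation*}
e^{-\sqrt{-1}\pi\alpha_{x_0}(\tilde x_{m-1}\tilde x_m-\tilde x'_{m-1}\tilde x'_m)}\,\mP^H_{x_0}(\tilde Y_H,\tilde Y'_H)\,P_{\alpha_{x_0}}(\tilde z_m,\tilde z'_m).
\end{equation*}
This is already exact in the flat model, where $h_s$ and the connection form are linear. There the $\delta$-frame kernel is the Landau-gauge kernel $\alpha\exp\big(-\frac{\pi\alpha}{2}|z-z'|^2\pm\sqrt{-1}\,\pi\alpha(x-x')(y+y')\big)$, translation invariant along $V$, rather than $P_\alpha(z,z')$. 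The extra factor is independent of $p$ in the scaled variables and is not a polynomial, so it cannot be absorbed into the $J'_{r,x_0}$. With the $\delta$-trivialization, $J'_{0,x_0}=\Id$ therefore fails off the diagonal. Your fallback of a ``modified Gaussian'' would prove a different statement. The paper's own proof has the same defect.

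\textbf{Two repairs.}
\begin{itemize}
\item Trivialize $L^p\otimes E$ by radial transport from $x_0$, and use Fermi coordinates only as a change of variables on the base. Then $\theta$ never enters. The rest of your sketch gives the stated expansion with $J'_{0,x_0}=\Id$: the cubic remainder, the chain rule, the transfer of exponential decay, and the volume factor. Note that the correct volume factor is $\kappa_{V,x_0}^{1/2}\kappa_{x_0}^{-1/2}(h_s)=|\det dh_s|^{1/2}$; the ratio ``times the Jacobian'' you wrote is identically $1$.
\item Keep the $\delta$-frame and carry the unimodular factor $e^{-\sqrt{-1}\Phi_p(Y,x_m)}(\cdot)\,e^{\sqrt{-1}\Phi_p(Y',x'_m)}$ explicitly, with $\Phi_p=\pi\alpha_{x_0}p\,x_{m-1}x_m+p\,\mO(|(Y,x_m)|^3)$. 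Its higher-order part expands into polynomials exactly as you describe. Being a conjugation by a modulus-one multiplication, it cancels at the intermediate points of compositions $\int P_p(x,z)P_p(z,y)\,dv_X(z)$ and on the diagonal, so the paper's later on-diagonal results are unaffected.
\end{itemize}
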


\begin{proof}
	The method for the proof is adapted from \cite[Proposition 5.4]{Finski24}. Let $x_0 \in V_s$, we have
		 	\begin{equation}\label{calPasympt}
				 \mP_{x_0}(p^{1/2}h_s(Y, x_m), p^{1/2}h_s(Y', x_m')) = \frac{\det_{\C} \mJ_{x_0}}{(2\pi)^{n}} \exp (-pf_s(Y, x_m, Y', x_m')).
			\end{equation}
The map $f_s$ is defined by 
		
			\begin{multline}
				f_s(Y, x_m, Y', x_m') = \frac{1}{4} \scal{(\mJ_{x_0}^2)^{1/2}(h_s(Y, x_m)-h_s(Y', x_m'))}{h_s(Y, x_m)-h_s(Y', x_m')} \\
				- \frac{1}{2} \scal{\mJ_{x_0}h_s(Y, x_m)}{h_s(Y',x_m')}.
			\end{multline}
By Theorem~\ref{thasymptotics} and the expansion~\eqref{polydech}, we write $f_s$ as 
		
			\begin{multline}
				f_s(Y, x_m, Y', x_m') = \frac{1}{4} \scal{(\mJ_{x_0}^2)^{1/2}(Y-Y')}{Y-Y'} + \frac{\pi \alpha_{x_0}}{2}(x_m'-x_m)^2 - \frac{1}{2} \scal{\mJ_{x_0}Y}{Y'} \\
				+ \frac{1}{2}x_m \scal{\mJ_{x_0}e_{\kn}(x_0)}{Y'}- \frac{1}{2}x_m' \scal{\mJ_{x_0}e_{\kn}(x_0)}{Y} + g_s(Y, x_m, Y', x_m'),
			\end{multline}
where $g_s$ is the remainder of the expansion of $f_s$ in $Y, Y', x_m, x_m'$ and is therefore of order at least $3$ in $Y, x_m, Y', x_m'$. Then by \eqref{calPsplit}, \eqref{calPasympt} becomes

	\begin{multline}
		\mP_{x_0}(p^{1/2}h_s(Y, x_m), p^{1/2}h_s(Y', x_m'))  \\
		=\mP_{x_0}^H(p^{1/2}Y_H, p^{1/2}Y_H')P_{\alpha_{x_0}}(p^{1/2}z_m, p^{1/2}z_m') \exp (-pg_s(Y, x_m, Y', x_m')).
	\end{multline}
We use the same trick as \cite[(7.2.19)]{MM07}, and write the Taylor expansion of $pg$ as follows
		
			\begin{equation}
				pg_s(Y, Y', x_m, x_m') = \sum_{i \geq 3} p^{(2-i)/2} g_s^{[i]}(p^{1/2}Y, p^{1/2}Y', p^{1/2}x_m, p^{1/2}x_m'),
			\end{equation}
with $g_s^{[i]}$ the term of order $i$ in the Taylor expansion of $g_s$. Combining with the exponential series expansion
			\begin{equation}
				\exp(-pg_s) = \sum_{k \geq 0} \frac{(-1)^k}{k!}(pg_s)^k, 
			\end{equation}
we get that there exist polynomials $Q_{i, x_0}$  such that
	
			\begin{multline}\label{devmPfermi}
				\mP_{x_0}(p^{1/2}h_s(Y, x_m), p^{1/2}h_s(Y', x_m'))\\
				=\sum_{i \geq 0} p^{n-i/2} Q_{i, x_0}(p^{1/2}Y, p^{1/2}x_m, p^{1/2}Y', p^{1/2}x_m')\mP^H_{x_0}(p^{1/2}Y_H, p^{1/2}Y_H')P_{\alpha_{x_0}}(p^{1/2}z_m, p^{1/2}z_m') \\
				+ \mO(p^{-\infty}).
			\end{multline}
In the above $Q_{0, x_0} = \Id_{E_{x_0}}$. By Theorem~\ref{ThBkernelasymptotics} we have 
			\begin{multline}\label{laststepdev}
				P_{p, x_0}(h_s(Y, x_m), h_s(Y', x_m'))  \\
				=\sum_{r \geq 0} p^{n-r/2} J_{r, x_0}(p^{1/2}h_s(Y, x_m), p^{1/2}h_s(Y', x_m')) \mP_{x_0}(p^{1/2}h_s(Y, x_m), p^{1/2}h_s(Y', x_m')) \\
				\kappa_{x_0}^{-1/2}(h_s(Y, x_m))\kappa_{x_0}^{-1/2}(h_s(Y', x_m')) + \mO(p^{-\infty}).
			\end{multline}
Again we write the expansion of $p^{1/2}h_s$ as 
			\begin{equation}
				p^{1/2}h_s(Y, x_m) = \sum_{i \geq 1} p^{(1-i)/2}h_s^{[i]}(p^{1/2}Y, p^{1/2}x_m), 
			\end{equation}
and we apply to~\eqref{laststepdev} the same method as before and write the asymptotics of $\kappa_{x_0}^{-1/2}(h_s(Y, x_m))$ and $\kappa_{x_0}^{-1/2}(h_s(Y', x_m'))$ in the same way as for $g_s$ and $h_s$ and combine with \eqref{devmPfermi}.

We then use~\eqref{BergmanFermi},  and proceed again with the asymptotics of $\theta_s^{L^p \otimes E}(Y, x_m)$ in the same way. After the above steps the asymptotics we obtain remain of the same form as~\eqref{devmPfermi}, only the polynomials change and we obtain~\eqref{BKernelasymptoticsFermi}. To compute the first term we use $Q_{0, x_0} = \Id_{E_{x_0}}$ and the first order expansions $\kappa_{x_0}(h_s(Y, x_m))\kappa_{V, x_0}^{-1}(Y, x_m)= 1 + p^{-1/2}\mO(|p^{1/2}Y, p^{1/2}x_m|)$ and $\theta_s^{L^p \otimes E}(Y, x_m) = \mathrm{Id} + p^{-1/2}\mO(|p^{1/2}Y, p^{1/2}x_m|)$ and get~\eqref{firsttermfermi}. 
	The fact that $h_s$ is a diffeomorphism ensures that $|h_s(Y, x_m)|$ is controlled by $|(Y, x_m)|$.
\end{proof}

\section{Asymptotics for the Toeplitz operator $T_{W,p}$}\label{SectionToeplitz}

This section is organized as follows. In Sect.~\ref{SectionKernToeplitz}, thanks to the appropriate choice of coordinates in~\eqref{splittangent},~\eqref{splitJ0},~\eqref{diagcalJ} and in the Bergman kernel expansion in Fermi coordinates~\eqref{BKernelasymptoticsFermi}, we obtain the full off-diagonal kernel expansion for $T_{W, p}$ by reducing the problem to the case of Section \ref{SectionBargmann}. This expansion is stated in Theorem~\ref{ThToeplitzindicasymptoticsFermi} and combined with the estimates of Lemma~\ref{Nonlocalizedasymptotics} it implies Theorem \ref{ThondiagonalTwexpansion}. In Sect.~\ref{PreuveCompoNoyau} we prove Theorem~\ref{thmNoyauCompo} by applying similar techniques. In Sect.~\ref{tracetoeplitz} we prove the trace asymptotics of Corollary~\ref{ThCEGeneralCase}, then we deduce Corollary~\ref{CorTraceToeplitz} from Corollary~\ref{ThCEGeneralCase}. In Sect.~\ref{Appendix} we prove the technical result stated in Lemma \ref{lemmeJq}. 

\subsection{Kernel asymptotics for $T_{W, p}$ and proof of Theorem \ref{ThondiagonalTwexpansion}}\label{SectionKernToeplitz}

For $x,y \in X$, by definition of $T_{W, p}$ we have

	\begin{equation}\label{toeplitznoyau}
		T_{W, p}(x, y) = \int_{W} P_p(x, z)P_p(z, y)dv_X(z).
	\end{equation}
For $s \in ]-\eta/2, \eta/2[$ and $x_0 \in V_s$, we introduce the following subsets of $U_{\eta, \vare}(x_0)$ and $\mU_{\eta, \vare}(x_0)$
				
		\begin{equation}
			U^-_{\eta, \vare}(x_0) = B^{T_{x_0}V_s}(0, \vare) \times ]-\eta-s, -s[, \qquad U^+_{\eta, \vare}(x_0) = B^{T_{x_0}V_s}(0, \vare) \times ]-s, \eta-s[,
		\end{equation}
				
		\begin{equation}
			\mU^-_{\eta, \vare}(x_0) = \psi_{x_0}(U^-_{\eta, \vare}(x_0)), \qquad \mU^+_{\eta, \vare}(x_0) = \psi_{x_0}(U^+_{\eta, \vare}(x_0)).
		\end{equation}
\begin{figure}[H]
	\begin{center}
	\begin{tikzpicture}[>=stealth]

	\fill[gray!20] (-1,2) rectangle (1,-2);
				
	\draw[-] (-3.5,0) -- (3,0) node [right] {$V= V_0$};

	\draw[dashed] (-3.5, 2) -- (3, 2) node [ right] {$\eta$};

	\draw[dashed] (-3.5, -2) -- (3, -2) node [ right] {$-\eta$};

	\draw[->] (-4, -2.5)--(-4, 2.5);

	\fill (-4, 1) circle (2pt) node [left] {$0$};

	\fill (-4, 0) circle (2pt) node [left] {$-s$};

	\fill (-4, -2) circle (2pt) node [left] {$-\eta-s$};

	\fill (-4, 2) circle (2pt) node [left] {$\eta-s$};

	\fill (0, 1) circle (2pt) node [below] {$x_0$};

	\draw (-3.5, 1)--(3, 1) node [right] {$V_s$};

	\draw[thick] (-1, 1)--(1, 1) node [midway, above] {\scriptsize $B^{V_s}(x_0, \vare)$};

	\fill[pattern=north east lines]
       (-1,0.95) rectangle (1,1.05);

	\draw [decorate, decoration={brace, amplitude=5pt}, thick] (-1.1,0.05) -- (-1.1,1.95) node [fill=white, midway, left=10pt] {$\mU_{\eta, \vare}^+(x_0)$};

	\draw [decorate, decoration={brace, amplitude=5pt}, thick] (-1.1,-1.95) -- (-1.1,-0.05) node [midway, left=10pt] {$\mU_{\eta, \vare}^-(x_0)$};

	\end{tikzpicture}
	\end{center}
	\caption{The neighborhoods $\mU_{\eta, \vare}^+(x_0)$ and $\mU_{\eta, \vare}^-(x_0)$, in the case $s \in [0, \eta/2].$}
	\label{Uplusmoins}
\end{figure}
Note that although the definition of $\mU_{\eta, \vare}^-(x_0)$ and $\mU_{\eta, \vare}^+(x_0)$ suggests they might depend on $s$, it is not the case (see figure \ref{Uplusmoins}). By \cite[Corollary 3.2]{Finski24}, the lemma we state below holds, as a consequence of the Bishop--Gromov inequality. 

		\begin{lem}[{\cite[Corollary 3.2]{Finski24}}]\label{ThmBishopG}
			Assume that there exists $s > 0$ such that the Ricci curvature of $(X, g^{TX})$ satisfies the lower bound 
			\begin{equation}
				\Ric_{g^{TX}} \geq -(m-1)s, 
			\end{equation} 
			where $m$ is the dimension of $X$. Then there exists a constant $C'>0$ which depends only on $n, s, r_X$ such that for any $x_0 \in X$, $k > 2(m-1)\sqrt{s}$, we have
			\begin{equation}\label{BishopG}
				\int_X \exp(-kd(x_0, x))dv_{X}(x) < \frac{C'}{k^m}.	
			\end{equation}
		
		\end{lem}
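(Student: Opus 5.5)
The plan is to deduce \eqref{BishopG} from the Bishop--Gromov volume comparison theorem, integrating over geodesic spheres via the coarea formula (equivalently, a layer-cake argument on dyadic annuli). Since $(X,g^{TX})$ is complete, Bishop--Gromov applies at every $x_0\in X$. For $R>0$ it gives
\[
\vol B^X(x_0,R)\le V_{-s}(R),
\]
where $V_{-s}(R)$ is the volume of a ball of radius $R$ in the $m$-dimensional simply connected space form of constant curvature $-s$. This model volume satisfies
\[
V_{-s}(R)=\vol(S^{m-1})\int_0^R\Big(\tfrac{\sinh(\sqrt{s}\,t)}{\sqrt{s}}\Big)^{m-1}dt .
\]
The key point is that this bound is uniform in $x_0$: no lower bound on the volume and no injectivity-radius assumption is needed for an upper bound.

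Next I would write the integral via the distribution function of $r(x)=d(x_0,x)$:
\[
\int_X e^{-kr}\,dv_X=\int_0^\infty k e^{-kR}\,\vol B^X(x_0,R)\,dR\le \int_0^\infty k e^{-kR}\,V_{-s}(R)\,dR .
\]
This is legitimate because $e^{-kr}=\int_r^\infty k e^{-kR}dR$, followed by Fubini. It then remains to estimate the last integral. Using $\sinh(\sqrt{s}t)\le \sqrt{s}\,t\,e^{\sqrt{s}t}$, one gets $V_{-s}(R)\le \frac{\vol(S^{m-1})}{m}R^m e^{(m-1)\sqrt{s}R}$. Therefore the integral is at most
\[
\frac{\vol(S^{m-1})}{m}\,k\int_0^\infty R^m e^{-(k-(m-1)\sqrt{s})R}\,dR=\frac{\vol(S^{m-1})}{m}\,\frac{k\, m!}{(k-(m-1)\sqrt{s})^{m+1}} .
\]

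Finally, the hypothesis $k>2(m-1)\sqrt{s}$ gives $k-(m-1)\sqrt{s}>k/2$. Hence the bound is at most $2^{m+1}(m-1)!\vol(S^{m-1})\,k^{-m}$, which yields \eqref{BishopG} with $C'$ depending only on $m=2n$. The dependence on $s$ is absorbed by the threshold on $k$, so it is consistent with the statement's ``depends only on $n,s,r_X$''.

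The main (mild) obstacle is justifying the first step: the layer-cake identity when $\vol B^X(x_0,R)$ grows exponentially. Integrability is ensured precisely by $k>(m-1)\sqrt{s}$, via monotone convergence. One also has to check that Bishop--Gromov requires only completeness and the Ricci lower bound, which holds here because the bounded geometry assumption bounds $R^{TX}$ and hence $\Ric$ below uniformly.
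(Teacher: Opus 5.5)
Your proof is correct and follows essentially the paper's route: the paper gives no argument of its own and cites Finski's Corollary 3.2 as a consequence of the Bishop--Gromov inequality. Your layer-cake integration of $\vol B^X(x_0,R)\le V_{-s}(R)$ against $ke^{-kR}$ is exactly such a derivation, it yields a constant depending only on $m$, and the exchange of integrals is just Tonelli for a nonnegative integrand, so your separate integrability concern is unnecessary.
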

The following results come from the exponential estimates~\eqref{expestimates} combined with the inequality~\eqref{BishopG}.

\begin{lem}\label{Nonlocalizedasymptotics}
	Using the notations above there exist $p_0 \in \N$, $c > 0$, such that for $l \in \N$ there exist $C_{l, W}, C_{l, W}', C_{l, W}''> 0$ such that for $p \geq p_0$ :
			
	\begin{enumerate}[(i)]
	\item If $x, x' \in X$, 
			\begin{equation}
				|T_{W, p}(x,x')|_{\cC^l(X)} \leq C_{l, W}p^{n+l/2} e^{-c\sqrt{p}d(x,x')}.
			\end{equation}
	\item If $x \in W^c$,
			\begin{equation}
				|T_{W, p}(x, x)|_{\cC^l(X)} \leq C_{l, W}' p^{n+l/2}e^{-c\sqrt{p}d(x, V)}.
			\end{equation}
				
	\item If $x \in W \setminus V$, 
				
			\begin{equation}
				\left| T_{W, p}(x, x) - P_p(x, x) \right|_{\cC^l(X)} \leq C_{l, W}'' p^{n+l/2}e^{-c\sqrt{p}d(x, V)}.
			\end{equation}
					
	\item If $x \in \mU_{\eta, \vare}^+(x_0)$ and $y \in \mU_{\eta, \vare}(x_0)$ for some $x_0 \in V_s$, 
				
				\begin{equation}
					T_{W,p}(x,y) = P_p(x, y) - \int_{\mU_{\eta, \vare}^-(x_0)} P_p(x,z)P_p(z,y)dv_X(z) + \mO_{\cC^l(X)}(p^{n+l/2}e^{-c\eta\sqrt{p}}).
				\end{equation}
					
	\item If $x \in \mU_{\eta, \vare}^-(x_0)$ and $y \in \mU_{\eta, \vare}(x_0)$ for some $x_0 \in V_s$, 
			 	
			 	\begin{equation}
			 		T_{W,p}(x, y) = \int_{\mU_{\eta, \vare}^+(x_0)} P_p(x, z)P_p(z, y)dv_X(z) + \mO_{\cC^l(X)}(p^{n+l/2}e^{-c\eta\sqrt{p}}).
			 	\end{equation}

	\end{enumerate}
\end{lem}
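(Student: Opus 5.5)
The plan is to derive all five statements from the integral representation \eqref{toeplitznoyau}, the exponential estimate \eqref{expestimates} and Lemma~\ref{ThmBishopG}. The Ricci lower bound needed there follows from bounded geometry. The one elementary tool is splitting the exponential weight along an integration variable $z$: for $x,x',z\in X$ and $c$ the constant of \eqref{expestimates},
\begin{equation*}
e^{-c\sqrt{p}d(x,z)}e^{-c\sqrt{p}d(z,x')}\le e^{-\frac{c}{2}\sqrt{p}\,(d(x,z)+d(z,x'))}\, e^{-\frac{c}{2}\sqrt{p}d(x,z)},
\end{equation*}
and then $d(x,z)+d(z,x')\ge d(x,x')$ by the triangle inequality. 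Applying Lemma~\ref{ThmBishopG} with $k=\frac{c}{2}\sqrt{p}$, which exceeds the threshold once $p\ge p_0$, gives
\begin{equation*}
\int_X e^{-\frac c2\sqrt p\, d(x,z)}\,dv_X(z)\le C p^{-n}.
\end{equation*}

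\textbf{Statement (i).} Differentiate under the integral. Derivatives in $x$ fall on $P_p(x,z)$ and derivatives in $x'$ fall on $P_p(z,x')$, so \eqref{expestimates} bounds the integrand by $C p^{2n+l/2}e^{-c\sqrt p d(x,z)}e^{-c\sqrt p d(z,x')}$. Enlarging $W$ to $X$ and using the two displays above yields $C p^{n+l/2}e^{-\frac c2\sqrt p d(x,x')}$. Renaming $c/2$ as $c$ gives (i).

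\textbf{Statements (ii) and (iii).} For $x\in W^c$ and $z\in W$, any path from $x$ to $z$ crosses $V$, so $d(x,z)\ge d(x,V)$. The same splitting, now on the diagonal $x'=x$, gives
\begin{equation*}
e^{-2c\sqrt p d(x,z)}\le e^{-c\sqrt p d(x,V)}e^{-c\sqrt p d(x,z)},
\end{equation*}
and integrating yields (ii). For (iii) I use the reproducing property $P_p^2=P_p$, that is $P_p(x,x)=\int_X P_p(x,z)P_p(z,x)\,dv_X(z)$. Hence
\begin{equation*}
P_p(x,x)-T_{W,p}(x,x)=\int_{X\setminus W}P_p(x,z)P_p(z,x)\,dv_X(z),
\end{equation*}
and for $x\in W\setminus V$, $z\notin W$ we again have $d(x,z)\ge d(x,V)$, so the same estimate applies.

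\textbf{Statements (iv) and (v).} Both are localizations. For (iv), write $T_{W,p}=P_p-\int_{X\setminus W}$. Then split $X\setminus W$ into $\mU^-_{\eta,\vare}(x_0)$ and the remaining region $R:=(X\setminus W)\setminus \mU^-_{\eta,\vare}(x_0)$. The geometric point is that for $x\in \mU^+_{\eta,\vare}(x_0)$ and $y\in\mU_{\eta,\vare}(x_0)$, every $z\in R$ has $d(x,z)+d(z,y)\ge c'\min(\eta,\vare)$. The reason is that $z$ lies either outside the tube or outside the lateral cylinder over $B^{V_s}(x_0,\vare)$, while $x$ and $y$ lie in the cylinder near $x_0$. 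Splitting the weight as in (i) then bounds the contribution of $R$ by $O(p^{n+l/2}e^{-c\eta\sqrt p})$, after shrinking $c$ and possibly taking $\vare$ comparable to $\eta$. Statement (v) is the same argument applied directly to $\int_W$, with $W$ split into $\mU^+_{\eta,\vare}(x_0)$ and the rest.

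\textbf{Main obstacle.} The delicate step is this uniform geometric separation in (iv)--(v). Points $x,y$ near the lateral edge of the cylinder are close to parts of $R$, so no positive lower bound on the distance can hold near that edge. I would handle this by proving the claim only for $x,y$ in a slightly smaller cylinder, say $B^{V_s}(x_0,\vare/2)\times$ the normal interval, which is all that is used later. Uniformity of the constants in $x_0$ then follows from compactness of $V$ and the bounded geometry assumptions.
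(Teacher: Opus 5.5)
Your proof is correct and follows the paper's argument: \eqref{expestimates} inserted into \eqref{toeplitznoyau}, the triangle-inequality splitting of the exponential weight, Lemma~\ref{ThmBishopG} for the remaining integral, the reproducing property $P_p^2=P_p$ for (iii), and for (iv)--(v) the splitting $W^c=\mU^-_{\eta,\vare}(x_0)\cup\bigl(W^c\setminus\mU_{\eta,\vare}(x_0)\bigr)$ and its analogue in $W$. Your caveat on (iv)--(v) is justified and addresses a point the paper glosses over: the paper asserts the factor $e^{-c\eta\sqrt p}$ on $W^c\setminus\mU_{\eta,\vare}(x_0)$ without a separation argument, but for $x=y$ near both $V$ and the lateral edge, the discarded integrand $P_p(x,z)P_p(x,z)^*\geq 0$ has size $p^{2n}$ on a set of volume $\sim p^{-n}$, so the bound fails there, and restricting $x$ to a smaller cylinder is the right fix, covering the case $x=x_0$ that is actually used later.
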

	
	In (iv) and (v) the terms $\mO_{\cC^l}(p^{n+l/2}e^{-c\eta\sqrt{p}})$ are to be understood in the same sense as the inequalities in (i), (ii), (iii).
	\begin{proof}
		Since $(X, g^{TX})$ is of bounded geometry, there exists $s>0$ such that the Ricci curvature satisfies the lower bound 
		
					\begin{equation}
						\Ric_{g^{TX}} \geq -(2n-1)s,
					\end{equation} 
	therefore Lemma \ref{ThmBishopG} holds and we will make extensive use of it.
	\begin{enumerate}[(i)]
	\item If $x, x' \in X$, applying~\eqref{expestimates} in~\eqref{toeplitznoyau}, and the triangle inequality to get $d(x, x'') + d(x'', x') \geq \frac{1}{2} d(x, x') + \frac{1}{2} d(x, x'')$, by Lemma~\ref{ThmBishopG} we get that there exists $C_{l, W}$ such that 
		\begin{align}
			|T_{W, p}(x, x')|_{\cC^l(X)} 	& \leq C_{l, W} p^{n+l/2}e^{-c\sqrt{p}d(x, x')} \vol(W).
		\end{align}
	\item If $x \in W^c \setminus U_{\eta}$, applying Lemma~\ref{ThmBishopG} and \eqref{expestimates} in~\eqref{toeplitznoyau}:

		\begin{equation}
			|T_{W, p}(x, x)|_{\cC^l(X)} 	\leq C_l^2 p^{2n+l/2}e^{-c\sqrt{p}d(x, V)}\int_{W}  e^{-c\sqrt{p}d(x, z)} dv_X(z),
		\end{equation}
	we can then apply \eqref{BishopG} and get
		\begin{equation}
			|T_{W, p}(x, x)|_{\cC^l(X)} \leq C_{l, W}' p^{n+l/2}e^{-c\sqrt{p}d(x, V)} C_W,
		\end{equation}
		in particular $T_{W, p}(x, x)$ has exponential decay in $p$.

		\item  If $x \in W \setminus U_{\eta}$, by definition of $T_{f,p}$ and~\eqref{expestimates} we get
			\begin{equation}
			\begin{aligned}
				& \left| T_{W, p}(x, x) - P_p(x, x) \right|_{\cC^l(X)}  \leq \int_{W^c} |P_p(x, z)P_p(z, x)|_{\cC^l(X)} dv_X(z) \\
						& \leq C_l^2 p^{2n+l/2}e^{-c\sqrt{p}d(x, V)}\int_{ W^c}  e^{-c\sqrt{p}d(x, z)} dv_X(z) \\
						& \leq C_l^2 p^{2n+l/2}e^{-c\sqrt{p}d(x, V)}\int_{W^c}  e^{-c\sqrt{p}d(V, z)} dv_X(z) \\
						& \leq C_{l, W}'' p^{n+l/2}e^{-2c\sqrt{p}d(x, V)},
			\end{aligned}
			\end{equation}
			where for the last inequality we applied Lemma \ref{ThmBishopG} again.
			We conclude that $T_{W, p}(x, x)$ has the same asymptotics as the Bergman kernel up to exponentially decaying terms in $p$.

			\item If $x \in \mU_{\eta, \vare}^+(x_0)$ and $y \in \mU_{\eta, \vare}$, we split the integral expressing $T_{W, p}(x, y)$ as
			
					\begin{multline}
						T_{W, p}(x, y) = P_p(x,y) -\int_{\mU_{\eta, \vare}^-(x_0)} P_p(x, z)P_p(z, y) dv_X(z)  \\
				 		\qquad  - \int_{W^c \setminus \mU_{\eta, \vare}(x_0)} P_p(x, z)P_p(z, y) dv_X(z). 
					\end{multline}
					By~\eqref{expestimates} we have
					\begin{equation}
					\begin{aligned}
						\bigg|\int_{W^c \setminus \mU_{\eta, \vare}(x_0)} P_p(x, z)P_p(z, y) & dv_X(z) \bigg|_{\cC^l(X)} \leq \int_{W^c \setminus \mU_{\eta, \vare}(x_0)} |P_p(x, z)P_p(z, y)|_{\cC^l(X)} dv_X(z) \\
						& \leq C_l^2 p^{2n+l/2} e^{-c\eta\sqrt{p}} \int_{W^c \setminus \mU_{\eta, \vare}(x_0)} e^{-c\sqrt{p}d(z, y)}dv_X(z)\\
						& \leq C_l^2 p^{n+l/2} e^{-c\eta\sqrt{p}} C_W'.
					\end{aligned}
				\end{equation}
				As before, we obtain the last inequality by applying Lemma~\ref{ThmBishopG}. 

			\item If $x \in \mU_{\eta, \vare}^-(x_0)$, and $y \in \mU_{\eta, \vare}(x_0)$, we write 

					\begin{equation}
						T_{W, p}(x, y) = \int_{\mU_{\eta, \vare}^+(x_0)}P_p(x, z)P_p(z, y)dv_X(z) + \int_{W \setminus \mU_{\eta, \vare}(x_0)} P_p(x, z)P_p(z,y)dv_X(z), 
					\end{equation}
			applying again Lemma~\ref{ThmBishopG} and~\eqref{expestimates}, we obtain
					\begin{equation}
					\begin{aligned}
							\left|\int_{W \setminus \mU_{\eta, \vare}(x_0)} P_p(x, z)P_p(z, y) dv_X(z) \right|_{\cC^l(X)} & \leq \int_{W \setminus \mU_{\eta, \vare}(x_0)} |P_p(x, z)P_p(z, y)|_{\cC^l(X)} dv_X(z) \\
							& \leq C_l^2 C_W'p^{n+l/2} e^{-c\eta\sqrt{p}} .
					\end{aligned}
				\end{equation}
		\end{enumerate}
		This completes the proof of Lemma~\ref{Nonlocalizedasymptotics}.
	\end{proof}

\begin{thm}\label{ThToeplitzindicasymptoticsFermi}
	 Let $x \in U_{\eta}$ and $s \in ]-\eta/2, \eta/2[$ such that we can write $x = x_0 \in V_s$. With the notations of Corollary~\ref{ThBKernelasymptoticsFermi}, there exist $\mathrm{End}(E)_{x_0}$-valued polynomials $Q_{r, x_0}$ in $Y, x_m, Y'$ and $x_m'$, such that for $|Y|, |Y'| < \vare$, $x_m, x_m' \in ]-\eta-s, \eta-s[$, we have the expansion, 
		\begin{enumerate}[(i)]
			\item If $(Y, x_m) \in U_{\eta,\vare}^+(x_0)$,
				\begin{multline}\label{ToeplitzindicasymptoticsFermi+}
					T_{W, p, x_0}((Y, x_m), (Y', x_m')) \\
					=P_{p, x_0}((Y, x_m), (Y', x_m')) - P_{p, x_0}((Y, x_m), (Y', x_m'))\mathrm{erfc} \left(\sqrt{p\pi \alpha_{x_0}} \left(s + \frac{z_m-\overline{z}_m'}{2i}\right) \right) \\
					+ \sum_{r \geq 1} p^{n-r/2}\mQ_{r, x_0}(p^{1/2}Y, p^{1/2}x_m, p^{1/2}Y', p^{1/2}x_m', p^{1/2}s)\exp \left(-p\pi \alpha_{x_0}\left(s+\frac{z_m-\overline{z}_m'}{2i}\right)^2\right) \\
					\mspace{500mu}+ \mO(p^{-\infty}).
				\end{multline}
			\item If $(Y, x_m) \in U_{\eta, \vare}^-(x_0)$, 
				\begin{multline}\label{ToeplitzindicasymptoticsFermi-}
					T_{W, p, x_0}((Y, x_m), (Y', x_m'))\\
					=P_{p, 	x_0}((Y, x_m), (Y', x_m'))\mathrm{erfc} \left(\sqrt{p\pi \alpha_{x_0}} \left(-s - \frac{z_m-\overline{z}_m'}{2i}\right) \right) \\
					+ \sum_{r \geq 1} p^{n-r/2}\mQ_{r, x_0}(p^{1/2}Y, p^{1/2}x_m, p^{1/2}Y', p^{1/2}x_m', p^{1/2}s)\exp \left(-p\pi \alpha_{x_0}\left(s+\frac{z_m-\overline{z}_m'}{2i}\right)^2\right) \\
					\mspace{500mu}+ \mO(p^{-\infty}).
				\end{multline}
		\end{enumerate}
In the above the quantity $\mQ_{r, x_0}(Y, x_m, Y', x_m')$ is defined by

		\begin{equation}
			\mQ_{r, x_0}(Y, x_m, Y', x_m') := Q_{r, x_0}(Y, x_m, Y', x_m') \mP_{x_0}(Y_H, Y_H')P_{\alpha_{x_0}}(z_m, z_m'), 
		\end{equation}
	and $\alpha_{x_0} := \omega_{x_0}(e_{\kn}(x_0), J_{x_0}e_{\kn}(x_0))$. Notice that since from~\eqref{identiteserf} we have $\mathrm{erfc}(z) = 1- \mathrm{erfc}(-z)$, the formulas~\eqref{ToeplitzindicasymptoticsFermi+} and~\eqref{ToeplitzindicasymptoticsFermi-} are actually the same.
\end{thm}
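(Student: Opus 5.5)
Start from Lemma~\ref{Nonlocalizedasymptotics} (iv) and (v). They reduce the kernel $T_{W,p,x_0}((Y,x_m),(Y',x_m'))$ to one of two local integrals, up to $\mO(p^{-\infty})$:
\[
P_p - \int_{\mU^-_{\eta,\vare}(x_0)} P_pP_p\,dv_X \qquad\text{or}\qquad \int_{\mU^+_{\eta,\vare}(x_0)} P_pP_p\,dv_X .
\]
Both integrals live in the Fermi chart. In Fermi coordinates $(Y'',x_m'')$ centered at $x_0\in V_s$, the boundary $V$ is exactly $\{x_m''=-s\}$, and $\mU^\pm_{\eta,\vare}(x_0)$ correspond to $x_m''\gtrless -s$. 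This flatness of $V$ is the reason for working in Fermi rather than normal coordinates. The volume form is $\kappa_{V,x_0}\,dv_{TV}\,dx_m''$, and the parallel-transport identifications along $\delta$ compose consistently, so the product of the two localized kernels is again a localized kernel. Treat case (i) first; case (ii) is identical with $\mathrm{erfc}(\cdot)$ replaced by $\mathrm{erfc}(-\cdot)=1-\mathrm{erfc}(\cdot)$, as the statement remarks.

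Next, substitute the expansion of Corollary~\ref{ThBKernelasymptoticsFermi} for both factors $P_{p,x_0}((Y,x_m),(Y'',x_m''))$ and $P_{p,x_0}((Y'',x_m''),(Y',x_m'))$. The factors $\kappa^{-1/2}_{V,x_0}(Y'',x_m'')$ combine with the volume density $\kappa_{V,x_0}(Y'',x_m'')$. Taylor-expand the resulting $\kappa_{V,x_0}^{-1}\kappa_{V,x_0}$ corrections around $0$ in powers of $p^{-1/2}$, with polynomial coefficients in the rescaled variables, as in the proof of Corollary~\ref{ThBKernelasymptoticsFermi}. After rescaling $Y''\mapsto p^{-1/2}Y''$, $x_m''\mapsto p^{-1/2}x_m''$, the integrand becomes, order by order, a polynomial times
\[
\mP^H_{x_0}(Y_H,Y''_H)\,\mP^H_{x_0}(Y''_H,Y'_H)\,P_{\alpha_{x_0}}(z_m,z_m'')\,P_{\alpha_{x_0}}(z_m'',z_m').
\]
Replacing the truncated domain $|Y''|<\vare$ by all of $T_{x_0}V_s$, and $x_m''\in\,]-\eta-s,-s[$ by $]-\infty,-s[$, costs only $\mO(p^{-\infty})$. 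This follows from the Gaussian/exponential decay in $|Y''-Y|$ and from the remainder bound of Corollary~\ref{ThBKernelasymptoticsFermi}, which grows at most polynomially and so is integrable against $\exp(-c\sqrt{\mu_0 p}|\cdot|)$. Lemma~\ref{ThmBishopG} handles the tail.

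The integral now factorizes. In the horizontal variables $Y''_H\in H_{x_0}V_s$, polynomial weights against $\mP^H\mP^H$ are computed by the reproducing property and the functional calculus of \cite[\S 7.1]{MM07}, i.e.\ \eqref{Toeplitzpolynome} in each complex direction. This yields a polynomial times $\mP^H_{x_0}(Y_H,Y'_H)$. In the remaining complex variable $z_m''=x_{m-1}''+ix_m''$, the integral runs over the half-plane $\{\Im z_m''<-\sqrt p\,s\}=\mathbb{H}^c_{-\sqrt p s}$. Corollary~\ref{ToeplitzH}, formula \eqref{ToeplitzpolynomeHc} with $s$ replaced by $-\sqrt p s$, evaluates it as a polynomial times
\[
P_{\alpha_{x_0}}(z_m,z_m')\,\mathrm{erfc}\!\left(\sqrt{\pi\alpha_{x_0}}\left(\sqrt p s+\tfrac{z_m-\overline z_m'}{2i}\right)\right)
\]
plus a lower-degree polynomial times $P_{\alpha_{x_0}}(z_m,z_m')\exp(-\pi\alpha_{x_0}(\sqrt p s+\frac{z_m-\overline z_m'}{2i})^2)$. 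Undoing the rescaling yields \eqref{ToeplitzindicasymptoticsFermi+}. The $r=0$ term is $\Id$ times the pure erfc contribution, since $J'_{0,x_0}=\Id$. At each order $r\ge1$, the erfc parts reassemble into the coefficients of the expansion of $P_{p,x_0}$ itself: summing over $\Im z''$ below and above the line recovers $P_p\circ P_p=P_p$, which is why the erfc multiplies exactly $P_{p,x_0}$. Everything left over is of the form $\mQ_r\exp(\cdots)$.

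I expect the main obstacle to be this last bookkeeping. For $r\ge1$, the erfc-type terms produced by Corollary~\ref{ToeplitzH} must match $p^{n-r/2}\mF'_{r,x_0}\,\mathrm{erfc}$ exactly, so that only Gaussian-type corrections remain. I would argue it by complementarity. The same computation over the whole plane (Toeplitz symbol $f$ with no indicator) gives the expansion of $P_p\circ P_p=P_p$. Formulas \eqref{ToeplitzpolynomeH} and \eqref{ToeplitzpolynomeHc} show that the half-plane terms equal the full-plane term times $\mathrm{erfc}(\mp\cdot)$, up to $\pm Q(f)\exp(\cdots)$. Hence the erfc coefficient at each order is the coefficient of $P_p$'s own expansion. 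Finally, I would check that differentiating in $Z,Z'$ up to order $l$ preserves the estimates, which follows from the derivative bounds in Corollary~\ref{ThBKernelasymptoticsFermi}.
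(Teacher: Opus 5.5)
Your proposal is correct and follows essentially the same route as the paper: localize with Lemma~\ref{Nonlocalizedasymptotics}, insert the Fermi-coordinate expansion of Corollary~\ref{ThBKernelasymptoticsFermi}, extend the integral to a half-space up to $\mO(p^{-\infty})$, rescale, and evaluate the normal half-plane integral with Corollary~\ref{ToeplitzH}. The differences are cosmetic: you derive (i) from part (iv) of the lemma while the paper derives (ii) from part (v), and the middle factors $\kappa_{V,x_0}^{-1/2}\kappa_{V,x_0}\kappa_{V,x_0}^{-1/2}$ that you propose to Taylor-expand are in fact identically equal to $1$. Your complementarity argument, in which the full-plane composition must reproduce the expansion of $P_p=P_p^2$ order by order, is precisely the justification behind the paper's one-line claim that the $\mathrm{erfc}$-coefficients $R_{r,r'}$ coincide with the Bergman kernel coefficients.
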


\begin{proof}
	We start by proving (ii). By (v) of Lemma~\ref{Nonlocalizedasymptotics} we have, for $(Y, x_m)\in U_{\eta, \vare}^-(x_0)$ and $(Y', x_m') \in U_{\eta, \vare}(x_0)$,
		\begin{multline}
			T_{W,p,x_0}((Y, x_m), (Y', x_m')) = \int_{(Y'', x_m'')\in U_{\eta, \vare}^+} P_{p, x_0}((Y, x_m),(Y'', x_m'')) P_{p, x_0}((Y'', x_m''),(Y',x_m'))\\
			\kappa_{V, x_0}(Y'',x_m'') dv_{TY}(Y'')dx_m'' + \mO(p^{-\infty}).
		\end{multline}
	By the Bergman kernel asymptotics~\eqref{BKernelasymptoticsFermi}, $T_{W,p,x_0}((Y, x_m), (Y', x_m'))$ admits the expansion
		\begin{multline}
			T_{W, p, x_0}((Y, x_m), (Y', x_m')) = \sum_{r, r' \geq 0} p^{2n - (r + r')/2} \kappa_{V, x_0}^{-1/2}(Y, x_m) \kappa_{V, x_0}^{-1/2}(Y', x_m') \\
			I_{r, r', x_0}(p^{1/2}Y, p^{1/2}x_m, p^{1/2}Y', p^{1/2}x_m'),
		\end{multline}
	where
		\begin{multline}
			I_{r, r', x_0}(p^{1/2}Y, p^{1/2}x_m, p^{1/2}Y', p^{1/2}x_m') := \int_{(Y'', x_m'') \in U_{\eta, \vare}^+}\mF'_{r, x_0}(p^{1/2}Y, p^{1/2}x_m, p^{1/2}Y'', p^{1/2}x_m'') \\
			\mF'_{r', x_0}(p^{1/2}Y'', p^{1/2}x_m'', p^{1/2}Y', p^{1/2}x_m')dv_{TY}(Y'')dx_m''.
		\end{multline}
	By the exponential decay property of the Bergman kernel, up to a $\mO(p^{-\infty})$ term the integral above is the same as the integral over $Y'' \in T_{x_0}V_s$ and $-s<x_m''< + \infty$, which corresponds to integrating over $Y_H'' \in H_{x_0}V_s$ and $z_m'' \in \mathbb{H}_{-s}$. After a change of variable in $p^{1/2}Y_H'', p^{1/2}z_m''$, we apply Corollary~\ref{ToeplitzH} where we replace $\alpha$ by $\alpha_{x_0}$ and $s$, $z$, $z'$ by $-p^{1/2}s$, $p^{1/2}z_m$, $p^{1/2}z_m'$: there exist $Q_{r, r', x_0}$, $R_{r, r', x_0}$ polynomials such that 
		\begin{multline}
			I_{r, r', x_0}(p^{1/2}Y, p^{1/2}x_m, p^{1/2}Y', p^{1/2}x_m') \\
			= p^{-n}R_{r, r', x_0}(p^{1/2}Y_H, p^{1/2}z_m, p^{1/2}Y'_H, p^{1/2}z_m')\mathrm{erfc} \left(\sqrt{p\pi \alpha_{x_0}}\left(-s - \frac{z_m-\overline{z}_m'}{2i}\right) \right) \\
			+ p^{-n}Q_{r, r', x_0}(p^{1/2}Y_H, p^{1/2}z_m, p^{1/2}Y'_H, p^{1/2}z_m', p^{1/2}s)\exp \left(-p\pi \alpha_{x_0}\left(-s-\frac{z_m-\overline{z}_m'}{2i}\right)^2\right).
		\end{multline}
	By Corollary~\ref{ToeplitzH}, the terms $R_{r, r', x_0}$ correspond exactly to the terms in the asymptotics of the Bergman kernel and $Q_{0, 0, x_0} = 0$. A similar computation starting from (iv) of Lemma~\ref{Nonlocalizedasymptotics} proves (i).
\end{proof}

\subsubsection*{\textbf{Proof of Theorem \ref{ThondiagonalTwexpansion}}} Theorem~\ref{ThondiagonalTwexpansion} is a particular case of the results in Lemma~\ref{Nonlocalizedasymptotics} and Theorem~\ref{ThToeplitzindicasymptoticsFermi}. \qed

\subsection{Proof of Theorem \ref{thmNoyauCompo}}\label{PreuveCompoNoyau} This paragraph contains the proof of Theorem~\ref{thmNoyauCompo}. 

\subsubsection*{\textbf{Proof of (\ref{One})}} We start by proving (\ref{One}) of Theorem \ref{thmNoyauCompo}. We begin with the case $g(X) = X^q-X^{q+1}$ and we introduce
	\begin{equation}\label{definitionPI}
		\Pi_{p, q}(x_1, \ldots, x_q) := P_p(x_1, x_2)\ldots P_p(x_{q-1}, x_{q})P_p(x_{q}, x_1).
	\end{equation}
We will use the same formula as \cite[Lemma 5.1]{CE20},
	\begin{equation}\label{TqminusTqp1}
		(T_{W, p}^{q}-T_{W, p}^{q+1})(x, x) = \int_{W^q \times W^c} \Pi_{p, q+2}(x, x_1 \ldots, x_{q+1}) dv_X(x_1)\ldots dv_X(x_q).
	\end{equation}
Let $x \in X$, $x_1, \ldots, x_{q} \in W$ and $x_{q+1} \in W^c$. 
By the triangle inequality and the exponential estimates \eqref{expestimates} we have, on the one hand
\begin{equation}
		|\Pi_{p, q+2}(x, x_1 \ldots, x_{q+1})|_{\cC^k(X^{q+2})} \leq C_k^{q+2}p^{(q+2)n+k/2}e^{-c \sqrt{p} \max_{1 \leq i \leq q+1} d(x, x_i)} e^{-c\sqrt{p}d(x, x_{q+1})},
\end{equation}
and on the other hand
\begin{equation}
		|\Pi_{p, q+2}(x, x_1 \ldots, x_{q+1})|_{\cC^k(X^{q+2})} \leq C_k^{q+2}p^{(q+2)n+k/2}e^{-c \sqrt{p} \max_{1 \leq i \leq q+1} d(x, x_i)} e^{-c\sqrt{p}d(x, x_1)}.
\end{equation}
 If $x \in W^c, \ d(x, x_1) > d(x, V)$ and if $x \in W \setminus V, \ d(x, x_{q+1}) > d(x, V)$. From the above inequalities we get
\begin{equation}\label{InegalitePI}
		|\Pi_{p, q+2}(x, x_1 \ldots, x_{q+1})|_{\cC^k(X^{q+2})} \leq C_k^{q+2}p^{(q+2)n+k/2}e^{-c \sqrt{p} \max_{1 \leq i \leq q+1} d(x, x_i)} e^{-c\sqrt{p}d(x, V)}.
\end{equation}

Applying \eqref{InegalitePI} and the inequality $\max_{1 \leq i \leq q+1} d(x_0, x_i) \geq \frac{1}{q+1}(d(x_0, x_1) + d(x_0, x_2) + \ldots + d(x_0, x_{q+1}))$ we obtain 
\begin{equation}
	\begin{split}
		&\bigg|(T_{W, p}^{q}-T_{W, p}^{q+1})(x, x)\bigg|_{\cC^k(X)} \\
		&\leq C_k^{q+2}p^{(q+2)n+k/2}e^{-c\sqrt{p}d(x, V)}\left(\int_{W}  e^{- \frac{c\sqrt{p}}{q+1}d(x, x_1)}dv_X(x_1)\right)^q \left( \int_{W^c}e^{-\frac{c\sqrt{p}}{q+1}d(x, x_{q+1})}dv_X(x_{q+1})\right).
	\end{split}
\end{equation}
By the Bishop--Gromov inequality \eqref{BishopG} we get that there exists $C_{k, q, n} > 0$ such that
\begin{equation}
	\bigg|(T_{W, p}^{q}-T_{W, p}^{q+1})(x, x)\bigg|_{\cC^k(X)} \leq C_{k, q, n} p^{n+k/2}e^{-c\sqrt{p}d(x, V)}.
\end{equation}

Point (\ref{One}) of Theorem \ref{thmNoyauCompo} is a consequence of the above inequality by writing any polynomial vanishing at $0$ and $1$ as a linear combination of $\{X^q-X^{q+1}\}_{q \geq 1}$. \qed

\subsubsection*{\textbf{Proof of (\ref{Two})}} Let us now prove point (\ref{Two}) of Theorem \ref{thmNoyauCompo}. 

\subsubsection*{Proof of \eqref{DevOnDiagCompo}} Let $q \in \N \setminus \{0 \}$. Recall the quantity $\Pi_{p, q+2}$ was defined in \eqref{definitionPI}. To compute the kernel of the operator $T_{W, p}^q-T_{W, p}^{q+1}$ we will use again the formula \eqref{TqminusTqp1}. By the triangle inequality and the exponential estimates~\eqref{expestimates},  we get

	\begin{equation}\label{decayPI}
		|\Pi_{p, q+2}(x, x_1 \ldots, x_{q+1})|_{\cC^k(X^{q+2})} \leq C_k^{q+2}p^{(q+2)n+k/2}e^{-c \sqrt{p} \max_{1 \leq i \leq q+1} d(x, x_i)} e^{-c\sqrt{p}d(x_q, x_{q+1})},
	\end{equation}
which we can write as, for $x_1, \ldots, x_q \in W$, $x_{q+1} \in W^c$,

	\begin{equation}
		|\Pi_{p, q+2}(x, x_1, \ldots, x_{q+1})|_{\cC^k(X^{q+2})} \leq C_k^{q+2}p^{(q+2)n+k/2}e^{-c \sqrt{p} \max_{1 \leq i \leq q+1} d(x, x_i)} e^{-c\sqrt{p}d(x_{q+1}, W)}.
	\end{equation}
Let $s \in ]-\eta/2, \eta/2[$ and $x_0 \in V_s$ such that $x_0 = \exp^X_{y_0}(s e_{\kn}(y_0))$ for some $y_0 \in V$. By \eqref{TqminusTqp1} we have 
	\begin{equation}
	\begin{split}
		&\bigg|(T_{W, p}^{q}-T_{W, p}^{q+1})(x_0, x_0) \\
		& \qquad \qquad - \int_{(\mU_{\eta, \vare}^+(x_0))^q \times \mU_{\eta, \vare}^-(x_0)} \Pi_{p, q+2}(x_0 , x_1 \ldots, x_{q+1}) dv_X(x_1)\ldots dv_X(x_{q+1}) \bigg|_{\cC^k(X)} \\
		& \leq \int_{(W \setminus\mU_{\eta, \vare}^+(x_0))^q \times (W^c \setminus \mU_{\eta, \vare}^-(x_0))} C_k^{q+2}p^{(q+2)n+k/2} e^{-c \sqrt{p} \max_{1 \leq i \leq q+1} d(x_0, x_i)} \\
		& \mspace{300mu} \times e^{-c\sqrt{p}d(x_{q+1}, W)}dv_X(x_1)\ldots dv_X(x_{q+1}).
	\end{split}
	\end{equation}
Since $\max_{1 \leq i \leq q+1} d(x_0, x_i) \geq \frac{1}{q+1}(d(x_0, x_1) + d(x_0, x_2) + \ldots + d(x_0, x_{q+1}))$ and $d(x_{q+1}, W) \geq \eta$ we get
	\begin{equation}
	\begin{split}
		&\bigg|(T_{W, p}^{q}-T_{W, p}^{q+1})(x_0, x_0) \\
		& \qquad \qquad - \int_{(\mU_{\eta, \vare}^+(x_0))^q \times \mU_{\eta, \vare}^-(x_0)} \Pi_{p, q+2}(x_0 , x_1 \ldots, x_{q+1}) dv_X(x_1)\ldots dv_X(x_{q+1}) \bigg|_{\cC^k(X)} \\
		& \leq C_k^{q+2}p^{(q+2)n+k/2}e^{-c\sqrt{p}\eta}\left(\int_{(W \setminus\mU_{\eta, \vare}^+(x_0))}  e^{- \frac{c\sqrt{p}}{q+1}d(x_0, x_1)}dv_X(x_1)\right)^q \\
		&\mspace{300mu}\times \left( \int_{W^c \setminus \mU_{\eta, \vare}^-(x_0)}e^{-\frac{c\sqrt{p}}{q+1}d(x_0, x_{q+1})}dv_X(x_{q+1})\right).
	\end{split}
\end{equation}
Applying Lemma \ref{ThmBishopG} to the integral terms in the above inequality, we obtain
\begin{multline}
		(T_{W, p}^{q}-T_{W, p}^{q+1})(x_0, x_0) = \int_{(\mU_{\eta, \vare}^+(x_0))^q \times \mU_{\eta, \vare}^-(x_0)} \Pi_{p, q+2}(x_0 , x_1 \ldots, x_{q+1}) dv_X(x_1)\ldots dv_X(x_q) \\
		+ \mO(p^{n+k/2}e^{-c\sqrt{p}\eta}).
\end{multline}
The localized version of the above equation becomes 
	\begin{multline}
		(T_{W, p, x_0}^{q}-T_{W, p, x_0}^{q+1})(0, 0)=\int_{(U_{\eta, \vare}^+(x_0))^q \times U_{\eta, \vare}^-(x_0)} \Pi_{p, q+2, x_0}(0, (Y_1, x_{m, 1}), \ldots, (Y_{q+1}, x_{m, q+1}))\\
		\times\kappa_{V, x_0}(Y_1, x_{m, 1}) \ldots \kappa_{V, x_0}(Y_{q+1}, x_{m, q+1}) dv_{TY}(Y_1) \ldots dv_{TY}(Y_{q+1}) dx_{m, 1}\ldots dx_{m, q+1} + \mO(p^{-\infty}),
	\end{multline}
where 
	\begin{multline}
		\Pi_{p, q+2, x_0}(0, (Y_1, x_{m, 1}), \ldots, (Y_{q+1}, x_{m, q+1})) := \\
		P_{p, x_0}(0, (Y_1, x_{m, 1})) \ldots P_{p, x_0}((Y_{q}, x_{m, q}), (Y_{q+1}, x_{m, q+1})) P_{p, x_0}((Y_{q+1}, x_{m, q+1}), 0).
	\end{multline}
By the same procedure as in the proof of Theorem~\ref{ThToeplitzindicasymptoticsFermi}, we  decompose the above integral in the coordinates $Y_{i, H}, z_{m, i}$, use the Taylor expansion of $h_s(Y, x_m)$ and of $\kappa_{V, x_0}$, and integrate over $W_{H, i} = p^{1/2}Y_{H, i} \in T_{x_0}H$. We then obtain

	\begin{equation}\label{asymptdiagxs}
		(T_{W, p}^{q}-T_{W, p}^{q+1})(x_0, x_0) = \sum_{k \geq 0} p^{n-k/2} I_{k, q, x_0}(p^{1/2}s) + \mO(p^{-\infty}),
	\end{equation}
where there exist $\mathrm{\End}(E)_{x_0}$-valued polynomials $a_{k, q, x_0}$ such that after a change of variable $z_i = p^{1/2}z_{m, i}$ the integral over $z_{m, i}$ writes as

	\begin{multline}\label{ExpressionIq}
		I_{k, q, x_0}(p^{1/2}s) =  \int_{\substack{\Im(z_1), \ldots, \Im(z_q) > -p^{1/2}s \\ \Im(z_{q+1})< -p^{1/2}s}} a_{k, q, x_0}(z_1, \ldots , z_{q+1})\\ 
		\times\exp \bigg(- \frac{\pi \alpha_{x_0}}{2} \scal{MZ}{Z}\bigg)d \lambda(z_1)\ldots d\lambda(z_{q+1}),
	\end{multline}
with $\lambda$ the Lebesgue measure and $M$ such that

	\begin{equation}
		-\pi \alpha_{x_0} z_1(\overline{z}_1-\overline{z}_2)- \ldots \\
		- \pi \alpha_{x_0} z_q(\overline{z}_q-\overline{z}_{q+1}) -\pi \alpha_{x_0} |z_{q+1}|^2 = - \frac{\pi \alpha_{x_0}}{2} \scal{MZ}{Z},
	\end{equation}
so that we have 
	\begin{equation}
		P_{\alpha_{x_0}}(0, z_1)P_{\alpha_{x_0}}(z_1, z_2)\ldots P_{\alpha_{x_0}}(z_{q+1}, 0) = \alpha_{x_0}^{q+2}\exp \bigg(- \frac{\pi \alpha_{x_0}}{2} \scal{MZ}{Z}\bigg),
	\end{equation}
with $Z = (x_1, y_1, \ldots, x_{q+1}, y_{q+1})$, $z_j = x_j + iy_j$. From the first term in the expansion of the Bergman kernel \eqref{BKernelasymptoticsFermi}, in the formula \eqref{ExpressionIq} notice that we get 
	\begin{equation}\label{Expressiona_0}
		a_{0, q, x_0}(z_1, \ldots , z_{q+1}) = \frac{{\det}_{\C} (\mJ_{x_0} \vert_{HV})}{(2\pi)^{n-1}} \alpha_{x_0}^{q+2},
	\end{equation}
the term $(2\pi)^{-(n-1)}\det_{\C} \mJ_{x_0}\vert_{HV}$ is what remains after integration along the variables $Y_{i, H}$ which amounts to composing $q+1$ times the horizontal Bergman kernel $\mP_{x_0}^H$ defined in~\eqref{horizontalBkernel}. The  matrix $M$ in \eqref{ExpressionIq} can be given explicitly as the $2(q+1) \times 2(q+1)$-matrix defined blockwise by $M_{j, j} = 2 \Id_{2}, 1 \leq j \leq q+1$ and 
	\begin{equation}\label{DefinitionM}
		M_{j, j+1} = \begin{pmatrix} -1 & i \\ -i & -1 \end{pmatrix}, \quad M_{j+1, j} = (M_{j, j+1})^t, \quad 1 \leq j \leq q,
	\end{equation}
and $M_{i, j} = 0$ otherwise. Notice that $\Re(M)$ is positive definite. Let $\mu > 0$ be the smallest eigenvalue of $\Re(M)$. Let $d_k$ be the degree of $a_{k, q, x_0}$. On the compact tubular neighborhood $\mU_{\eta}$ the coefficients $\alpha_{x_0}$ and the coefficients in front of each monomial of the polynomials $a_{k, q, x_0}$ are bounded and we have an inequality of the form, for $\alpha = \min_{x_0 \in U_{\eta}} \alpha_{x_0}$,

	\begin{equation}
		|I_{k, q, x_0}(p^{1/2}s)| \leq  \int_{\substack{\Im(z_1), \ldots, \Im(z_q) > -p^{1/2}s \\ \Im(z_{q+1})< -p^{1/2}s}} C_{k, q}(1 + |Z|^2)^{d_k/2} \exp \left( -\frac{\pi \alpha}{2}\mu |Z|^2 \right)d \lambda(z_1)\ldots d\lambda(z_{q+1}),
	\end{equation}
which implies by integrating over $z_1, \ldots z_q \in \C,$

	\begin{equation}\label{rightexpdecay}
		|I_{k, q, x_0}(p^{1/2}s)| \leq  \int_{\substack{ \Im(z_{q+1})< -p^{1/2}s}} C'_{k, q}(1 + |z_{q+1}|^2)^{d_k/2} \exp \left( -\frac{\pi \alpha}{2}\mu |z_{q+1}|^2 \right) d\lambda(z_{q+1}).
	\end{equation}
therefore $I_{k, q, x_0}(p^{1/2}s)$ has exponential decay in $p^{d_k/2}s^{d_k}\exp\left(-\frac{\mu \pi \alpha}{2} ps^2\right)$ for $s > 0$ Note that the above inequality only depends on $s > 0$ and not on $x_0\in V_s$. To handle the case $s < 0$, we instead write 

	\begin{multline}\label{leftexpdecay}
		|I_{k, q, x_0}(p^{1/2}s)| \leq  \int_{\substack{\Im(z_1), \ldots, \Im(z_q) > -p^{1/2}s}} C''_{k, q}(1 + |z_1|^2 + \ldots +|z_q|^2)^{d_k/2} \\
		\exp \left( -\frac{\pi \alpha}{2}\mu (|z_1|^2 + \ldots +|z_q|^2)\right)d \lambda(z_1)\ldots d\lambda(z_q).
	\end{multline}
This proves \eqref{DevOnDiagCompo} for $g(X) = X^q-X^{q+1}$. When $g$ is any polynomial vanishing at $0$ and $1$, we conclude by decomposing $g$ in the basis $\{X^q-X^{q+1}\}_{q \geq 1}$. 
\subsubsection*{Proof of \eqref{FirstTermDevOnDiagCompo}} We first state a technical lemma, which we prove later on in Section \ref{Appendix}.

\begin{lem}\label{lemmeJq}
Let $M$ be the matrix defined in \eqref{DefinitionM}. Set
\begin{equation}\label{definitionJq}
	J_q(s):= \int_{\substack{\Im(z_1), \ldots, \Im(z_q) > -s \\ \Im(z_{q+1}) < -s}}\exp \left(- \frac{\pi}{2} \scal{MZ}{Z}\right) d\lambda(z_1) \ldots d\lambda(z_{q+1}),
\end{equation}
Then we have 
\begin{equation}\label{egaliteJq}
J_q(s) =  \int_{-\infty}^{\infty} \big(\mathrm{erfc}^{q}(y)-\mathrm{erfc}^{q+1}(y)\big)\frac{1}{\sqrt{\pi}}e^{-(y+\sqrt{2\pi}s)^2}dy.
\end{equation}
\end{lem}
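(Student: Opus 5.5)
The plan is to identify $J_q(s)$ with the diagonal value at the origin of a polynomial in the half-plane Toeplitz operator of Section~\ref{SectionBargmann}, and then compute that value by diagonalising this operator. With $\alpha=1$ we have $P_1(z,z')=\exp(-\frac{\pi}{2}(|z|^2+|z'|^2-2z\overline{z}'))$. By construction of $M$,
\begin{equation*}
P_1(0,z_1)P_1(z_1,z_2)\cdots P_1(z_{q+1},0)=\exp\Big(-\frac{\pi}{2}\scal{MZ}{Z}\Big).
\end{equation*}
Set $T:=T_1^{-s}=P_1M_{\bbone_{\mathbb{H}_{-s}}}P_1$. Integrating $z_1,\dots,z_q$ over $\mathbb{H}_{-s}$ and $z_{q+1}$ over $\mathbb{H}_{-s}^c$ then gives
\begin{equation*}
J_q(s)=\big(T^q\,T_1^{-s,c}\big)(0,0)=\big(T^q-T^{q+1}\big)(0,0),
\end{equation*}
using $T_1^{-s,c}=P_1-T$. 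This is the Bargmann analogue of \eqref{TqminusTqp1}. It therefore suffices to compute $(h(T))(0,0)$ for $h(x)=x^q-x^{q+1}$.

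To diagonalise $T$, I will use that the symbol $\bbone_{\mathbb{H}_{-s}}$ is invariant under real translations $z\mapsto z+a$. The corresponding magnetic translations act unitarily on $\ker\mL$ and commute with $T$. Their joint generalised eigenfunctions are
\begin{equation*}
e_\xi(z)=\exp\Big(\frac{\pi}{2}z^2+2\pi i\xi z-\frac{\pi}{2}|z|^2\Big),\qquad \xi\in\R .
\end{equation*}
A direct computation gives $|e_\xi(x+iy)|^2=e^{-2\pi y^2-4\pi\xi y}=e^{2\pi\xi^2}e^{-2\pi(y+\xi)^2}$. So each $e_\xi$ is a plane wave $e^{2\pi i \xi x}\cdot(\text{phase})$ in $x$ and a Gaussian in $y$ centred at $-\xi$.

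Orthogonality in the $x$ variable shows that $\scal{M_{\bbone_{\mathbb{H}_{-s}}}e_\xi}{e_{\xi'}}$ is a multiple of $\delta(\xi-\xi')$. After normalising by $\scal{e_\xi}{e_{\xi'}}$, the multiplier is
\begin{equation*}
m(\xi)=\frac{\int_{y>-s}e^{-2\pi(y+\xi)^2}dy}{\int_{\R}e^{-2\pi(y+\xi)^2}dy}=\mathrm{erfc}\big(\sqrt{2\pi}(\xi-s)\big),
\end{equation*}
with the paper's normalisation \eqref{erfunction}. I plan to make this step rigorous by using the unitary Bargmann transform $L^2(\R)\to\ker\mL$. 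Under it, $\{e_\xi\}$ corresponds (up to normalisation) to the Fourier basis, and $T$ becomes the multiplication operator by $m$. From this, $h(T)$ has kernel $h(T)(z,z')=\int h(m(\xi))\,e_\xi(z)\overline{e_\xi(z')}\,d\nu(\xi)$, where $\nu$ is the Plancherel measure fixed by requiring that $h\equiv 1$ return $P_1$.

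At $z=z'=0$ we have $|e_\xi(0)|^2=1$, so the normalisation is determined by $\int d\nu=P_1(0,0)=1$. To find $\nu$, I will compare $\int e_\xi(z)\overline{e_\xi(0)}\,d\nu(\xi)$ with $P_1(z,0)=e^{-\pi|z|^2/2}$, i.e.\ take a Fourier transform of a Gaussian. I expect $d\nu(\xi)=\sqrt{2}\,e^{-2\pi\xi^2}d\xi$, which integrates to $1$. Then
\begin{equation*}
J_q(s)=\int_\R\big(m^q-m^{q+1}\big)(\xi)\,\sqrt{2}\,e^{-2\pi\xi^2}\,d\xi .
\end{equation*}
The substitution $y=\sqrt{2\pi}(\xi-s)$ turns this into $\int(\mathrm{erfc}^q-\mathrm{erfc}^{q+1})(y)\,\pi^{-1/2}e^{-(y+\sqrt{2\pi}s)^2}\,dy$, which is \eqref{egaliteJq}.

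The main obstacle is the rigorous diagonalisation together with the correct normalisation constants and sign conventions: the orientation of $\mathbb{H}_{-s}$, the sign inside erfc, and the factor $\sqrt{2\pi}$. Two consistency checks will pin these down. The case $q=0$ must reproduce $T_1^{-s,c}(0,0)=\mathrm{erfc}(\sqrt{\pi}s)$ from Lemma~\ref{noyauToeplitzInd}, using that the convolution of $\mathrm{erfc}(\cdot)$ with the Gaussian $\pi^{-1/2}e^{-(\cdot+a)^2}$ equals $\mathrm{erfc}(a/\sqrt2)$. Formula \eqref{Tnoyaudim21} should similarly reproduce $m$. If the Bargmann-transform route proves awkward, the fallback is to compute the kernel of $T^k$ inductively from Lemma~\ref{noyauToeplitzInd}, writing it as $P_1(z,z')F_k\big(\frac{z-\overline{z}'}{2i}\big)$ and performing the Gaussian integration in $\Re z''$ explicitly at each step.
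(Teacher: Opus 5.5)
Your proof is correct, and it takes a genuinely different route from the paper. The paper stays inside the finite-dimensional Gaussian integral. It integrates out the real parts $x_1,\dots,x_{q+1}$ by a Fourier transform, inverts the discrete Laplacian $A'$, and finds that the remaining quadratic form in the imaginary parts is $2\Sigma^{-1}$ with $\Sigma=\frac12(\Id+\mathrm{Ones})$. So the $y_j$ are equicorrelated Gaussians with correlation $1/2$, and the paper then quotes Steck--Owen's one-dimensional formula for equicorrelated orthant probabilities. You instead read $J_q(s)$ as $(T^q-T^{q+1})(0,0)$ for the model operator $T=T_1^{-s}$, and diagonalise $T$ through its invariance under real magnetic translations. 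The Fubini step there is harmless because $\Re M=A'\otimes\Id_2$ is positive definite. Your constants are right: the unitary Bargmann transform sends $e^{2\pi i\xi t}$ to $2^{1/4}e^{-\pi\xi^2}e_\xi$, which gives $m(\xi)=\mathrm{erfc}(\sqrt{2\pi}(\xi-s))$ and $d\nu=\sqrt2\,e^{-2\pi\xi^2}d\xi$. The pointwise kernel formula is justified cleanly by $h(T)(0,0)=\scal{h(T)P_1(\cdot,0)}{P_1(\cdot,0)}$, where $P_1(\cdot,0)$ is the image of $2^{1/4}e^{-\pi t^2}$, whose Fourier transform is $2^{1/4}e^{-\pi\xi^2}$; so you do not need your fallback. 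Your route needs no matrix inversion and no external probabilistic theorem. It also gives $h(T)(0,0)=\int h(m(\xi))\,d\nu(\xi)$ for every bounded Borel $h$ at once, hence \eqref{FirstTermDevOnDiagCompo} for arbitrary $g$ directly. It exposes the spectral structure of the model operator as well: the $\xi$-length of $\{a\le m\le b\}$ is exactly the constant $\frac{1}{\sqrt{2\pi}}(\mathrm{erfc}^{-1}(a)-\mathrm{erfc}^{-1}(b))$ in \eqref{repartition_spectre}. Finally, it explains the paper's computation. Conditionally on $\xi\sim\mathcal{N}(0,(4\pi)^{-1})$, the $y_j$ are independent $\mathcal{N}(-\xi,(4\pi)^{-1})$, which is precisely the latent-variable representation behind Steck--Owen with $\rho=1/2$ and reproduces the paper's covariance $\frac{1}{2\pi}\Sigma$. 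The paper's route, in exchange, is entirely finite-dimensional and avoids generalised eigenfunctions and the Bargmann transform.
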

From \eqref{Expressiona_0}, applying the change of variable $Z' = \sqrt{\alpha_{x_0}}Z$ in \eqref{ExpressionIq}, we get
\begin{equation}\label{exprI0}
	I_{0, q, x_0}(p^{1/2}s) = \frac{{\det}_{\C} (\mJ_{x_0} \vert_{HV})}{(2\pi)^{n-1}} \alpha_{x_0}J_q(s\sqrt{p\alpha_{x_0}}),
\end{equation}
by \eqref{egaliteJq}, this proves \eqref{FirstTermDevOnDiagCompo} for $g(X) = X^q-X^{q+1}$. When $g$ is any polynomial vanishing at $0$ and $1$, we conclude by decomposing $g$ in the basis $\{X^q-X^{q+1}\}_{q \geq 1}$. This proves (\ref{Two})  of Theorem \ref{thmNoyauCompo} and finishes the proof of the theorem.\qed

\subsection{Trace asymptotics for $T_{W, p}$, proofs of Corollary~\ref{ThCEGeneralCase} and Corollary~\ref{CorTraceToeplitz}}\label{tracetoeplitz}

In this paragraph we prove Corollary~\ref{ThCEGeneralCase}, from which we then deduce Corollary~\ref{CorTraceToeplitz}. The proofs also start with the case $g(X) = X^q-X^{q+1}$.
\subsubsection*{\textbf{Proof of Corollary~\ref{ThCEGeneralCase}}} 
We integrate~\eqref{asymptdiagxs} over $s \in ]-\eta/2, \eta/2[$ and obtain
	\begin{align}
	\begin{split}\label{lastlinetraceq}
		\tr(T_{W, p}^q - & T_{W, p}^{q+1}) = \int_{-\eta/2}^{\eta/2}ds\int_{x_0 \in V_s} \tr\left((T_{W, p}^{q}-T_{W, p}^{q+1})(x_0, x_0)\right) \kappa(x_0, s)dv_V(x_0) + \mO(p^{-\infty})\\
		&= \sum_{k \geq 0} p^{n-k/2} \int_{-\eta/2}^{ \eta/2}ds\int_{x_0 \in V_s} \tr(I_{k, q, x_0}(p^{1/2}s)) \kappa(x_0, s)dv_V(x_0) + \mO(p^{-\infty})\\
		&= \sum_{k \geq 0} p^{n-k/2} \int_{-\eta/2}^{ \eta/2} J_{k, q, s}(p^{1/2}s)ds + \mO(p^{-\infty}),
	\end{split}
\end{align}
where $\kappa$ is such that $dv_X(x_0) = \kappa(x_0, s)dv_V(x_0)ds$ with $\kappa(x_0, 0) = 1$ and
to get the form~\eqref{lastlinetraceq} we take the trace and write the Taylor expansion of $\kappa(x_0, s)$ as

	\begin{equation}\label{cheatcodeTaylor}
		\kappa(x_0, s) = \sum_{k \geq 0} p^{-k/2}c_{k, x_0}(p^{1/2}s)^k + \mO(p^{-\infty}),
	\end{equation}
where $c_{0, x_0} = 1$, and then we integrate over $x_0 \in V_s$. By change of variable $s' = p^{1/2}s$,~\eqref{lastlinetraceq} becomes  
	\begin{equation}
		\tr(T_{W, p}^q - T_{W, p}^{q+1}) = \sum_{k \geq 0} p^{n-k/2-1/2} \int_{-\sqrt{p}\eta/2}^{\sqrt{p}\eta/2} J_{k, q, p^{-1/2}s}(s)ds + \mO(p^{-\infty}).
	\end{equation}
Decouple the variables and write the expansion of $J_{k, q, t}(s)$ at $t=0$ uniformly in $s$ as
	\begin{equation}\label{Jexpansion}
		J_{k, q, t}(s) = \sum_{j \geq 0} f_{k,j,q}(s)t^j,
	\end{equation}
where each $f_{k, j}$ has exponential-type decay in $s^2$ by~\eqref{rightexpdecay} and~\eqref{leftexpdecay}. We now get 
	\begin{equation}
		\tr(T_{W, p}^q - T_{W, p}^{q+1}) = \sum_{j, k \geq 0} p^{n-(k+j)/2-1/2} \int_{-\sqrt{p}\eta/2}^{\sqrt{p}\eta/2} f_{k, j,q}(s)s^jds + \mO(p^{-\infty}).
	\end{equation}
By the decay property of the functions $f_{k,j,q}$, this gives
	\begin{equation}
		\tr(T_{W, p}^q - T_{W, p}^{q+1}) = \sum_{j, k \geq 0} p^{n-(k+j)/2-1/2} \int_{-\infty}^{ +\infty} f_{k, j,q}(s)s^jds + \mO(p^{-\infty}).
	\end{equation}
We therefore obtain the asymptotics of the trace
\begin{equation}\label{devTraceTWp}
	\tr(T_{W, p}^q - T_{W, p}^{q+1}) = \sum_{k \geq 1} p^{n-k/2}a_{k, q}(W) + \mO(p^{-\infty}).
\end{equation}
Let us now compute the coefficient $a_{1, q}(W)$. The integral we need to compute involves the first coefficients of each expansion~\eqref{lastlinetraceq},~\eqref{cheatcodeTaylor},~\eqref{Jexpansion}, therefore 

	\begin{equation}
		a_{1,q}(W) = \rank(E)\int_{-\infty}^{+\infty}\int_{V} I_{0, q, x_0}(s)dv_V(x_0)ds
	\end{equation}
By \eqref{exprI0} and Fubini we have

	\begin{equation}\label{expressiona1W}
		a_{1,q}(W) = \rank(E)\int_{V} \frac{{\det}_{\C} (\mJ_{x_0} \vert_{HV})}{(2\pi)^{n-1}} \alpha_{x_0}\left[\int_{-\infty}^{\infty} J_q(s\sqrt{\alpha_{x_0}}) ds\right] dv_V(x_0),
	\end{equation}
By Lemma \ref{lemmeJq} we get 
\begin{equation}
\begin{aligned}
	\int_{-\infty}^{\infty} J_q(s\sqrt{\alpha_{x_0}}) ds &= \int_{-\infty}^{\infty} \int_{-\infty}^{\infty} \big(\mathrm{erfc}^{q}(y)-\mathrm{erfc}^{q+1}(y)\big)\frac{1}{\sqrt{\pi}}e^{-(y+\sqrt{2\pi \alpha_{x_0}}s)^2}dy ds \\
	&= \frac{1}{\sqrt{2\pi \alpha_{x_0}}} \int_{-\infty}^{\infty} \big(\mathrm{erfc}^{q}(y)-\mathrm{erfc}^{q+1}(y)\big) dy,
\end{aligned}
\end{equation}
to get the last equality we applied Fubini, the change of variable $s' = y + \sqrt{2\pi \alpha_{x_0}}s$, and used the fact that $\int_{-\infty}^{\infty} e^{-s'^2}ds' = \sqrt{\pi}$.
Hence \eqref{expressiona1W} becomes
	\begin{equation}\label{expressiona1finale}
		a_{1,q}(W) =(2\pi)^{-1/2}\rank(E)C(V)C_q,
	\end{equation}
with 
	\begin{align}\label{expressionCq}
	\begin{split}
		 & C_q :=\int_{-\infty}^{\infty} \big(\mathrm{erfc}^{q}(y)-\mathrm{erfc}^{q+1}(y)\big) dy, \\
		& C(V) := \int_{x_0 \in V} \frac{{\det}_{\C} (\mJ_{x_0} \vert_{HV})}{(2\pi)^{n-1}} \sqrt{ \alpha_{x_0}} dv_V(x_0).
	\end{split}
	\end{align}
Notice that if we assume $J_0 = J$, we get  $C(V) =\vol(V)$.

From \eqref{devTraceTWp}, \eqref{expressiona1finale}, \eqref{expressionCq} we obtain for $g:[0, 1]\rightarrow \R$ polynomial vanishing at $0$ and $1$, the asymptotic expansion 
	\begin{equation}\label{trpolyTW}
	\begin{gathered}
		\tr(g(T_{W, p})) = \sum_{k \geq 1} a_{k, g}(W)p^{n-k/2} + \mO(p^{-\infty}), \quad \mathrm{with} \quad a_{1, g} = (2\pi)^{-1/2}\rank(E)C(V) I(g),\\
		\mathrm{and} \quad I(g) = \int_{-\infty}^{\infty} g\left(\mathrm{erfc}(x)\right)dx.
	\end{gathered}
	\end{equation}
In the case where $g$ only vanishes at $0$, applying~\eqref{trpolyTW} to $\tilde{g}(x) := g(x) -g(1)x$ proves Corollary~\ref{ThCEGeneralCase}. \qed

\subsubsection*{\textbf{Proof of Corollary~\ref{CorTraceToeplitz}}} To get Corollary~\ref{CorTraceToeplitz} from Corollary~\ref{ThCEGeneralCase}, it is sufficient to adapt the proof of Charles--Estienne in~\cite[\S 6, pp. 546-548]{CE20}. For~\eqref{casegHolder2}, their proof relies on the fact that the spectrum of $g(T_{W,p})$ is nonnegative and on the Stone--Weierstrass polynomial approximation theorem. To get~\eqref{repartition_spectre}, the idea is to approximate the function $\bbone_{[a, b]}$ from above and below by continuous functions vanishing near $0$ and $1$. \qed

\subsection{Proof of Lemma \ref{lemmeJq}}\label{Appendix}

In this paragraph we prove the technical result of Lemma \ref{lemmeJq}. Recall that

\begin{equation}\label{Jq}
	J_q(s):= \int_{\substack{\Im(z_1), \ldots, \Im(z_q) > -s \\ \Im(z_{q+1}) < -s}}\exp \left(- \frac{\pi}{2} \scal{MZ}{Z}\right) d\lambda(z_1) \ldots d\lambda(z_{q+1}),
\end{equation}
Let $A'$, $B'$ be the $(q+1) \times (q+1)$ matrices defined by 

\begin{equation}
A' := \begin{pmatrix}
		2 & -1 &  & 0\\
		-1 & \ddots & \ddots &   \\
		   & \ddots & \ddots & -1 \\
		0 &	&	-1	&	2	\\
 \end{pmatrix}, \qquad B' := \begin{pmatrix}
		0 & 1 &  & 0\\
		-1 & \ddots & \ddots &   \\
		   & \ddots & \ddots & 1 \\
		0 &	&	-1	&	0	\\
\end{pmatrix}.
\end{equation}
Notice $A'$ is the matrix of the discrete Laplace operator.  For $X = (x_1, x_2, \ldots, x_{q+1})$, $Y = (y_1, y_2, \ldots, y_{q+1})$, $Z = (x_1, y_1, \ldots, x_{q+1}, y_{q+1})$ and since $B'$ is skew-symmetric, we get

\begin{equation}
\scal{MZ}{Z} = \scal{A'X}{X} + \scal{A'Y}{Y} + 2i\scal{X}{B'Y}.
\end{equation}
Hence the equality 

\begin{equation}
\begin{aligned}
	J_q(s) &= \int_{\substack{y_1, \ldots, y_q > -s \\ y_{q+1} <-s}}\int_{(x_1, \ldots, x_{q+1}) \in \R^{q+1}}\exp\left[ -\frac{\pi}{2} (\scal{A'X}{X} + \scal{A'Y}{Y})  \right] \\
	&\mspace{350mu}\exp\left[ i\pi \scal{X}{B'Y}\right] d\lambda(X)d\lambda(Y), \\
	  &= \int_{\substack{y_1, \ldots, y_q > -s \\ y_{q+1} <-s}} \exp\left[ -\frac{\pi}{2} \scal{A'Y}{Y}  \right] F(Y) d\lambda(Y),
\end{aligned}
\end{equation}
and we recognize a Fourier transform
\begin{equation}
\begin{aligned}
	F(Y) &= \int_{(x_1,\ldots ,x_{q+1}) \in \R^{q+1}} \exp\left[-\frac{\pi}{2} \scal{A'X}{X}\right] \exp \left[i\pi \scal{X}{B'Y}\right]d\lambda(X) \\[2ex]
	&= \mathscr{F}\left(\exp\left[-\pi\scal{\frac{1}{2}A'X}{X}\right]\right)\left(-\frac{1}{2}B'Y\right)\\[2ex]
	&= 2^{(q+1)/2}(\det A')^{-1/2}\mathscr{F}\left(\exp\left[-\pi\scal{X}{X}\right]\right)\left(-\frac{1}{\sqrt{2}}(A')^{-1/2}B'Y\right)\\[2ex]
	&= 2^{(q+1)/2}(\det A')^{-1/2} \exp \left[-\frac{\pi}{2} \scal{(A')^{-1/2}B'Y}{(A')^{-1/2}B'Y} \right]\\[2ex]
	&= 2^{(q+1)/2}(\det A')^{-1/2} \exp \left[-\frac{\pi}{2} \scal{B'^T(A')^{-1}B'Y}{Y} \right].
\end{aligned}
\end{equation}
We now have to compute 

\begin{equation}\label{computationJq}
	 J_q(s) = 2^{(q+1)/2}(\det A')^{-1/2}\int_{\substack{y_1, \ldots, y_q > -s \\ y_{q+1} <-s}} \exp\left[ -\frac{\pi}{2} \scal{(A'+ B'^T(A')^{-1}B')Y}{Y}  \right] d\lambda(Y).
\end{equation}
Set 
\begin{equation}
	\begin{gathered}
	E_1 := \mathrm{Diag}(q+1, 2q, 3(q-1),\ldots, 2q, q+1),\\[2ex]
	E_2 := \begin{pmatrix}
			  0  &         &        &  & & \\
			  q  &  0  &        & 0 &  & \\
			  q-1 & 2(q-1) &   0   &   &  & \\
			  q-2 & 2(q-2) & 3(q-2) & \ddots & & \\
			   \vdots   &        &        &  &  &\\
			   1        &  2     & 3 & \cdots & q & 0
		   \end{pmatrix}.
\end{gathered}
\end{equation} 
Notice $B'^T = -B'$, $\det A' = (q+2)$ and the inverse of $A'$ is given by 
\begin{equation}
	A'^{-1} = \frac{1}{q+2}(E_1 + E_2 + E_2^T).
\end{equation}
After computation, we obtain
\begin{equation}
	(q+2) B' A'^{-1}B' = \begin{pmatrix}
							-2q & -(q-2) & 4 & \ldots & 4 \\
							-(q-2) & -2q & -(q-2) & \ddots & \vdots \\
							 4 & -(q-2) & \ddots & \ddots& 4\\
							\vdots & \ddots & \ddots& & -(q-2)\\
							4	& \cdots & 4 &-(q-2) & -2q 
						\end{pmatrix}.
\end{equation}
Therefore we have

\begin{equation}\label{matriceSigmainverse}
	A' + B'^T A'^{-1}B' = 4\Id-\frac{4}{q+2}\mathrm{Ones} = 2\Sigma^{-1},
\end{equation}
with
\begin{equation}\label{definitionSigma}
	\Sigma := \frac{1}{2}(\Id+\mathrm{Ones}), \quad \mathrm{and} \quad \mathrm{Ones}:= \begin{pmatrix}
						1 & \ldots & 1 \\
						\vdots & \ddots & \vdots \\
						1 & \ldots & 1
					\end{pmatrix}.
\end{equation}
Since $\det A' = (q+2)$, from \eqref{computationJq} and \eqref{matriceSigmainverse} we get 

\begin{equation}\label{exprJqs}
\begin{aligned}
	 J_q(s) &= \frac{2^{(q+1)/2}}{\sqrt{q+2}}\int_{\substack{y_1, \ldots, y_q > -s \\ y_{q+1} <-s}} \exp\left( -\pi\scal{\Sigma^{-1} Y}{Y}\right) d\lambda(Y)\\
	 &= \frac{(1/2\pi)^{(q+1)/2}}{\sqrt{\frac{q+2}{2^{q+1}}}}\int_{\substack{y_1, \ldots, y_q > -\sqrt{2\pi}s \\ y_{q+1} <-\sqrt{2\pi}s}} \exp\left( -\frac{1}{2}\scal{\Sigma^{-1} Y}{Y}\right) d\lambda(Y).
\end{aligned}
\end{equation}
In order to compute $J_q$, we will now apply the following result of Steck--Owen \cite{SteckOwen62}.

\begin{thm}[{\cite[(D)]{SteckOwen62}}]
	Let $(X_1, \ldots, X_n)$ be a multivariate equicorrelated Gaussian with $\mathbb{E}(X_i)= 0$, $\mathbb{E}(X_i^2)= 1$ and $\mathrm{Cov}(X_i, X_j) = \rho$ if $i \neq j$. Let us denote by $F_{n, \rho}(s) = \mathbb{P}(X_1 \leq s, \ldots, X_{n} \leq s)$ the distribution function. Then if $\rho > -1/(n-1)$ we have 
	\begin{equation}\label{SteckOwen}
	F_{n, \rho}(s) = \int_{-\infty}^{\infty} \left[G\left(\frac{s-\rho^{1/2}y}{(1-\rho)^{1/2}}\right)\right]^nG'(y)dy,
	\end{equation}
	with $G(x) = \mathrm{erfc}\left(-\frac{1}{\sqrt{2}}x\right)$, $G'(x) = \frac{1}{\sqrt{2\pi}}e^{-\frac{1}{2}x^2}$. The function $G$ is holomorphic and in the case $-\frac{1}{n-1}<\rho < 0$, the expression above still holds when taking $\rho^{1/2} = \sqrt{-1} |\rho|^{1/2}$.
\end{thm}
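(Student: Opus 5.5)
The plan is to prove \eqref{SteckOwen} first for $0\le\rho<1$ by a conditioning argument, and then extend it to $-\frac{1}{n-1}<\rho<0$ by analytic continuation in $\rho$.

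\textbf{Step 1: the case $0\le\rho<1$.} With the paper's convention \eqref{erfunction}, we have $G(x)=\frac{1}{\sqrt{\pi}}\int_{-x/\sqrt2}^{\infty}e^{-t^2}dt$, which is exactly the standard normal distribution function, and $G'$ is the standard normal density. Take i.i.d.\ standard Gaussians $Y_0,Z_1,\ldots,Z_n$ and set $X_i:=\rho^{1/2}Y_0+(1-\rho)^{1/2}Z_i$. This vector has the required equicorrelated law. Conditionally on $Y_0=y$, the events $\{X_i\le s\}=\{Z_i\le (s-\rho^{1/2}y)/(1-\rho)^{1/2}\}$ are independent, and each has probability $G\big((s-\rho^{1/2}y)/(1-\rho)^{1/2}\big)$. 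Integrating against the law $G'(y)dy$ of $Y_0$ gives \eqref{SteckOwen}.

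\textbf{Step 2: analyticity of the left-hand side.} The covariance matrix is $(1-\rho)\Id+\rho\,\mathrm{Ones}$, with eigenvalues $1-\rho$ and $1+(n-1)\rho$. It is therefore positive definite exactly on $\rho\in\,]-\frac{1}{n-1},1[$. Write $F_{n,\rho}(s)$ as the integral of the Gaussian density over the orthant $\{x_i\le s\}$. The inverse covariance and the determinant depend rationally on $\rho$, and the real part of the quadratic form stays uniformly positive for $\rho$ in a small complex neighbourhood $\Omega$ of that interval. Hence differentiation under the integral is justified, and $\rho\mapsto F_{n,\rho}(s)$ extends holomorphically to $\Omega$.

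\textbf{Step 3: analyticity of the right-hand side.} Set $t=\rho^{1/2}$ and
$$h(t):=\int_{-\infty}^{\infty}G\Big(\tfrac{s-ty}{(1-t^2)^{1/2}}\Big)^nG'(y)\,dy .$$
The substitution $y\mapsto -y$ shows that $h(-t)=h(t)$. Once we know $h$ is holomorphic on a neighbourhood of the set $\{t:\ t^2\in\,]-\frac{1}{n-1},1[\}$, which is a cross made of a real segment and an imaginary segment, it follows that $h(t)=H(t^2)$ with $H$ holomorphic. Moreover, $H(\rho)$ for $\rho<0$ is precisely the right-hand side computed with $\rho^{1/2}=\sqrt{-1}|\rho|^{1/2}$.

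The main obstacle is to justify holomorphy of $h$ near the imaginary segment, that is, to get uniform integrability there. $G$ is entire, and from $G(x)=\frac{1}{\sqrt{2\pi}}\int_{-\infty}^x e^{-u^2/2}du$ one gets a bound of the form $|G(a+ib)|\le C(1+|b|)e^{b^2/2}$. For $t=i\tau$ with $\tau^2=|\rho|$, the argument has imaginary part of size $\tau|y|/(1+\tau^2)^{1/2}$. The integrand is then bounded by a polynomial in $|y|$ times
$$\exp\Big(\frac{n|\rho|y^2}{2(1+|\rho|)}-\frac{y^2}{2}\Big).$$
This is integrable if and only if $n|\rho|<1+|\rho|$, i.e.\ $|\rho|<\frac{1}{n-1}$, which is exactly the stated range. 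The same estimate holds locally uniformly for nearby complex $t$, so Morera's theorem (or differentiation under the integral) gives holomorphy.

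\textbf{Step 4: conclusion.} The two sides agree on $[0,1[$ by Step 1, and both are holomorphic on a connected complex neighbourhood of $]-\frac{1}{n-1},1[$ by Steps 2 and 3. By the identity theorem they agree on all of $]-\frac{1}{n-1},1[$, which proves the claim for negative $\rho$ with $\rho^{1/2}=\sqrt{-1}|\rho|^{1/2}$.
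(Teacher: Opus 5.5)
Your proof is correct. The paper gives no proof of this statement. It quotes the result from Steck--Owen and uses it only with $\rho=1/2$: the covariance $\Sigma=\frac12(\mathrm{Id}+\mathrm{Ones})$ in the proof of Lemma~\ref{lemmeJq}, with $n=q$ and $n=q+1$. That case is covered entirely by your Step~1, where the representation $X_i=\rho^{1/2}Y_0+(1-\rho)^{1/2}Z_i$ gives \eqref{SteckOwen} directly. So Step~1 alone already suffices for everything the paper needs.

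Your Steps~2--4 supply a self-contained justification of the negative-correlation extension, which the paper takes on trust. The details hold up:
\begin{itemize}
\item The vertical-contour estimate $|G(a+ib)|\le 1+\frac{|b|}{\sqrt{2\pi}}e^{b^2/2}$ is right.
\item The resulting integrability condition $n|\rho|<1+|\rho|$ coincides exactly with positive definiteness of the covariance matrix. Hence both sides are holomorphic on a common connected neighbourhood of $]-\frac{1}{n-1},1[$, and the identity theorem applies.
\item Passing through the even function $h(t)=H(t^2)$ correctly removes any branch ambiguity in $\rho^{1/2}$, and it shows why $\rho^{1/2}=\sqrt{-1}\,|\rho|^{1/2}$ is the right continuation.
\end{itemize}

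Two minor points of wording:
\begin{itemize}
\item The positivity of the real part of the quadratic form in Step~2, and the domination in Step~3, are only locally uniform on compact subintervals. That is all you need.
\item The neighbourhood of the cross should be taken symmetric under $t\mapsto -t$ so that $H$ is well defined. Holomorphy of $H$ at $0$ then follows from the even power series of $h$.
\end{itemize}
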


By the expression \eqref{exprJqs} and the symmetries of the Gaussian law, we can write $J_q(s)$ as 
\begin{equation}
	J_q(s) = \mathbb{P}\left(X_1 \leq \sqrt{2\pi} s, \ldots, X_{q} \leq \sqrt{2\pi}s\right)- \mathbb{P}\left(X_1 \leq \sqrt{2\pi} s, \ldots, X_{q+1} \leq \sqrt{2\pi}s\right),
\end{equation}
where $(X_1, \ldots, X_{q+1})$ are equicorrelated centered Gaussians with covariance matrix $\Sigma$ defined in \eqref{definitionSigma}, therefore $\mathbb{E}(X_i^2) = 1$ and for $i \neq j$, $\mathrm{Cov}(X_i, X_j) = 1/2$. Applying the above theorem we obtain
\begin{equation}\label{expressionJqerfc}
	J_q(s) = \int_{-\infty}^{\infty} \bigg[\left[\mathrm{erfc}\left(-\sqrt{2\pi}s+\frac{1}{\sqrt{2}}y\right)\right]^{q}- \left[\mathrm{erfc}\left(-\sqrt{2\pi}s+\frac{1}{\sqrt{2}}y\right)\right]^{q+1}\bigg]\frac{1}{\sqrt{2\pi}}e^{-y^2/2}dy,
\end{equation}
and by change of variable $y' = -\sqrt{2\pi}s + \frac{1}{\sqrt{2}}y$ we get \eqref{egaliteJq}. This proves Lemma \ref{lemmeJq}. \qed

\bibliography{BiblioToeplitzInd7}{}
\bibliographystyle{amsplain}	
\end{document}